\title{Chern classes of automorphic vector bundles, II}
\author{H\'el\`ene Esnault and Michael Harris}
\institution{Freie Universit\"at Berlin, Mathematik, Arnimallee 3, 14195 Berlin, Germany}\\
\email{esnault@math.fu-berlin.de}} \\
\institution{Department of Mathematics,
Columbia University,   2990 Broadway
New York, New York 10027, USA}\\
\email{harris@math.columbia.edu}}
\date{\vspace{-5ex}} 
\journal{\'Epijournal de G\'eom\'etrie Alg\'ebrique} 
\newdimen\origiwspc
\font
\numberwithin{equation}{section}
\renewcommand{\p@equation}{\arabic{section}.\arabic{equation}\expandafter\@gobble}
\newtheorem{thm}[equation]{Theorem}
\newtheorem{lem}[equation]{Lemma}
\newtheorem{cor}[equation]{Corollary}
\newtheorem{prop}[equation]{Proposition}
\newtheorem{hyp}[equation]{Hypothesis}
\newtheorem{nota}[equation]{Notation} 
\newtheorem{defn}[equation]{Definition}
\newtheorem{disc}[equation]{Discussion}
\newtheorem{rmk}[equation]{Remark}
\newtheorem{rmks}[equation]{Remarks}
\newcommand{\isoarrow}{{~\overset\sim\longrightarrow~}}
\newcommand{\sS}{{\mathcal S}}
\newcommand{\g}{{\gamma}}
\newcommand{\CE}{{\mathcal{E}}}
\newcommand{\CM}{{\mathcal{M}}}
\newcommand{\CS}{{\mathcal{S}}}
\newcommand{\CA}{{\mathcal{A}}}
\newcommand{\CW}{{\mathcal{W}}}
\newcommand{\CX}{{\mathcal{X}}}
\newcommand{\fg}{{\mathfrak{g}}}
\newcommand{\ra}{{~\rightarrow~}}
\newcommand{\Ss}{{\Sigma}}
\newcommand{\CH}{{\mathcal{H}}}
\newcommand{\hX}{{\hat{X}}}
\newcommand{\QQ}{{\mathbb Q}}
\newcommand{\Qbar}{{\overline{\mathbb Q}}}
\newcommand{\Qpb}{{\overline{\mathbb Q}_{p}}}
\newcommand{\RR}{{\mathbb R}}
\newcommand{\ad}{{\mathbf A}}
\newcommand{\af}{{{\mathbf A}_f}}
\newcommand{\CC}{{\mathbb C}}
\newcommand{\PP}{{\mathbb P}}
\newcommand{\Qp}{{{\mathbb Q}_p}}
\newcommand{\Zp}{{{\mathbb Z}_p}}
\newcommand{\Vect}{{\rm Vect}}  
\newcommand{\Rep}{{\rm Rep}} 
\newcommand{\GL}{\mathrm{GL}}
\newcommand{\SO}{\mathrm{SO}}
\newcommand{\PGL}{\mathrm{PGL}}
\newcommand{\SP}{\mathrm{Sp}}
\newcommand{\GSp}{\mathrm{GSp}}
\newcommand{\Isom}{\mathrm{Isom}}
\newcommand{\Proj}{\mathrm{Proj}}
\newcommand{\St}{\mathrm{St}}
\newcommand{\ml}[2]{\begin{multline}\label{#1}#2 \end{multline}} 
\newcommand{\ga}[2]{\begin{gather}\label{#1}#2 \end{gather}} 
\begin{document}


\maketitle



\begin{prelims}

\vspace{-0.55cm}

\def\abstractname{Abstract}
\abstract{We prove that  the $\ell$-adic Chern classes of  canonical extensions of automorphic vector bundles, over toroidal compactifications of Shimura varieties of Hodge type over $\bar{ \mathbb{Q}}_p$,  descend to classes in the $\ell$-adic cohomology of the minimal compactifications.  These are invariant under the Galois group of the $p$-adic field above which the variety and the bundle are defined.}

\keywords{Automorphic bundles, Chern classes, Shimura varieties, perfectoids}

\MSCclass{11G18; 14G35}


\languagesection{Fran\c{c}ais}{%

\vspace{-0.05cm}
{\bf Titre. Classes de Chern des fibr\'es vectoriels automorphes, II} \commentskip {\bf R\'esum\'e.} Nous d\'emontrons que, sur les compactifications toro\"{\i}dales des vari\'et\'es de Shimura de type Hodge sur $\bar{ \mathbb{Q}}_p$, les classes de Chern $\ell$-adiques des extensions canoniques des fibr\'es vectoriels automorphes descendent en des classes de cohomologie $\ell$-adique sur les compactifications minimales. Elles 
sont invariantes sous l'action du groupe de Galois du corps $p$-adique sur lequel la vari\'et\'e et le fibr\'e sont d\'efinis.}

\end{prelims}


\newpage

\setcounter{tocdepth}{1} \tableofcontents

\section*{Introduction} \label{sec:intro}
\addcontentsline{toc}{section}{Introduction}
Let $S$ be a Shimura variety. It is defined over a number field \cite[Corollaire 5.5]{ Del71}, called the reflex field $E$,  and carries a family of automorphic vector bundles $\mathcal{E}$, defined (collectively) over the same number field (\cite[Theorem 4.8]{Har85}; the rationality conventions of \cite[\S 1]{EH17} are recalled in \S \ref{sec:autobdl} below). The Shimura variety has a minimal compactification $S\hookrightarrow  S^{\rm min}$ which in dimension $\ge 2$  is singular. This is minimal in the sense that each of the toroidal compactifications $S^{\rm tor}_\Sigma$, subject to the extra choice of a fan $\Sigma$, admits a birational proper morphism
$S^{\rm tor}_\Sigma\to S^{\rm min}$, which is an isomorphism on $S$ and (under additional hypotheses on the choice of fan and level subgroup, that we will make systematically) is a desingularization of $S^{\rm min}$. The automorphic bundle $\mathcal{E}$ admits  a canonical extension $\mathcal{E}^{\rm can}$ on $S^{\rm tor}_\Sigma$ constructed by Mumford \cite[Theorem~3.1]{Mum77} and more generally in \cite{Har89}; $\mathcal{E}^{\rm can}$  is locally free and defined over a finite extension of the reflex field. On $S^{\rm min},$ on the other hand, there is generally no locally free extension of $\mathcal{E}$, except in dimension $1$ when  $S^{\rm tor}_\Sigma \to S^{\rm min}$ is an isomorphism. In \cite{EH17} we studied the continuous $\ell$-adic Chern classes of $\mathcal{E}$ in case $S$ is proper. In particular, we proved that the higher Chern classes in continuous $\ell$-adic cohomology die if $\mathcal{E}$ is flat \cite[Theorem~0.2]{EH17}. A key ingredient in the proof is the study of the action of Hecke algebra on continuous $\ell$-adic cohomology, and the fact that Chern classes lie in the eigenspace for the volume character \cite[Corollary~1.18]{EH17}. When $S$ is not proper, the Hecke algebra does not act on any cohomology of $S^{\rm tor}_\Sigma$ . Thus we cannot apply the methods of {\it loc.cit.} to prove the analogous theorem  for the canonical extensions of flat automorphic bundles on $S^{\rm tor}$. 

\medskip

On the other hand, the Hecke algebra does act on the $\ell$-adic cohomology of $S^{\rm min}$.  In addition, over the field of complex numbers $\mathbb{C}$, Goresky-Pardon \cite[Theorem]{GP02} used 
explicit estimates on differential forms to construct
classes  $c_n(\mathcal{E})^{GP}\in H^{2n}(S^{\rm min}, \mathbb{C})$ 
which descend the Chern classes of $\mathcal{E}^{\rm can}$ in Betti cohomology $H^{2n}(S^{\rm tor}_\Sigma, \mathbb{Q}(n))$.  They do not come from classes in $H^{2n}(S^{\rm min}, \mathbb{Q}(n))$; precisely this fact enabled Looijenga \cite[Theorem~5.1]{Loo17} to construct mixed Tate extensions in the Hodge category on the Siegel modular variety (see also the  result announced by Nair in \cite[\S0.4]{Nai14}). 

Our main theorem gives an $\ell$-adic version of the Goresky-Pardon construction. 
\begin{thm}[Theorem~\ref{chernp}, Theorem~\ref{pullback_Chern}] \label{thm:main}
Let $S$ be a Shimura variety of Hodge type.  Let $\ell$ be a prime number,  $E_w$ the completion of $E$ at a place $w$ dividing the prime $p$ different from $\ell$, and $E_w\hookrightarrow \bar{ \mathbb{Q}}_p$ be an algebraic closure. Then the Chern classes of $\mathcal{E}^{\rm can}$ in $H^{2n}(S^{\rm tor}_{\Sigma, \bar {\mathbb Q} _p}, \mathbb{Q}_\ell(n))$ descend to well defined classes $c_n(\mathcal{E})$ in $H^{2n}(S_{\bar{\mathbb{Q}}_p} ^{\rm min}, \mathbb{Q}_\ell(n))$.  The $c_n(\mathcal{E})$ are contravariant for the change of level  and of Shimura data,  verify the Whitney product formula, lie in the eigenspace for the volume character of the Hecke algebra,
and are invariant under the action of the Galois group of $E_w$ on $H^{2n}(S_{\bar{\mathbb{Q}}_p} ^{\rm min}, \mathbb{Q}_\ell(n))$.  Their pullback  under
$$H^{2n}\left(S_{\bar{\mathbb{Q}}_p} ^{\rm min}, \mathbb{Q}_\ell(n)\right) \to 
H^{2n}\left(S_{\bar{\mathbb{Q}}_p} ^{\rm tor}, \mathbb{Q}_\ell(n)\right) = H^{2n}\left(S_{\Sigma, \bar{\mathbb{Q}}} ^{\rm tor}, \mathbb{Q}_\ell(n)\right)$$
are the Chern classes of $\mathcal{E}^{\rm can}$ which are invariant under the Galois group of $E$.

\end{thm}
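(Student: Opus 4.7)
The plan is to realize the Chern classes directly on the minimal compactification by passing to the perfectoid Shimura variety and descending through the Hodge--Tate period map. By work of Scholze, extended to Hodge type, the tower of minimal compactifications $\{S^{\rm min}_{K^pK_p,\bar{\mathbb{Q}}_p}\}_{K_p}$ admits a perfectoid limit $\mathcal{S}^{\rm min}_{K^p}$ equipped with a Hodge--Tate period map
$$\pi_{HT}\colon \mathcal{S}^{\rm min}_{K^p}\longrightarrow \mathcal{F}\ell$$
to the compact dual flag variety attached to the Shimura datum; critically, $\pi_{HT}$ is defined on the minimal compactification itself, and on the toroidal tower there is a compatible lift $\pi_{HT}^{\rm tor}$ fitting into a square with the proper map $\mathcal{S}^{\rm tor}_{K^p}\to\mathcal{S}^{\rm min}_{K^p}$.

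The automorphic bundle $\mathcal{E}$ is the restriction of a $G$-equivariant bundle $\mathcal{E}_0$ on $\mathcal{F}\ell$, and at infinite level the canonical extension should satisfy $\mathcal{E}^{\rm can}\simeq(\pi_{HT}^{\rm tor})^*\mathcal{E}_0$. Consequently, $c_n(\mathcal{E}^{\rm can})$ on $\mathcal{S}^{\rm tor}_{K^p}$ is the pullback of the Galois-invariant class $c_n(\mathcal{E}_0)\in H^{2n}(\mathcal{F}\ell,\mathbb{Q}_\ell(n))$. Pulling $c_n(\mathcal{E}_0)$ back along $\pi_{HT}$ itself yields a class in $H^{2n}(\mathcal{S}^{\rm min}_{K^p},\mathbb{Q}_\ell(n))$, which, via the identification
$$H^{2n}\!\left(\mathcal{S}^{\rm min}_{K^p},\mathbb{Q}_\ell(n)\right)=\varinjlim_{K_p} H^{2n}\!\left(S^{\rm min}_{K^pK_p,\bar{\mathbb{Q}}_p},\mathbb{Q}_\ell(n)\right),$$
descends to the desired $c_n(\mathcal{E})\in H^{2n}(S^{\rm min}_{\bar{\mathbb{Q}}_p},\mathbb{Q}_\ell(n))$ at some finite level $K_p$; level independence is then enforced using Hecke functoriality combined with the fact that Chern classes are characterized by their restriction to the open part.

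The properties of $c_n(\mathcal{E})$ required by the theorem follow almost formally. The compatibility $\pi^*c_n(\mathcal{E})=c_n(\mathcal{E}^{\rm can})$ is built into the commutative square of period maps, giving the second half of the statement (\emph{i.e.} Theorem~\ref{pullback_Chern}). Galois invariance is inherited from the flag variety: $\mathcal{F}\ell$ and $\mathcal{E}_0$ are defined over the reflex field, so $c_n(\mathcal{E}_0)$ is Galois invariant, and both $\pi_{HT}$ and the descent to finite level are Galois equivariant. Contravariance under Hecke correspondences, the Whitney product formula, and compatibility with morphisms of Shimura data descend from the corresponding properties of the equivariant construction on $\mathcal{F}\ell$; the volume-character eigenspace property holds because $\pi_{HT}$ intertwines the Hecke action at infinite level with the trivial action on $\mathcal{F}\ell$, so everything pulled back lies in that eigenspace.

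The main obstacle is the extension across the boundary: matching the canonical (Mumford--Harris) extension $\mathcal{E}^{\rm can}$, defined at finite level on the toroidal side, with the pullback $(\pi_{HT}^{\rm tor})^*\mathcal{E}_0$ at the infinite-level perfectoid side in neighborhoods of the boundary strata of the minimal compactification. Although $\pi_{HT}$ itself is defined on all of $\mathcal{S}^{\rm min}_{K^p}$ and $\mathcal{E}_0$ extends trivially from $\mathcal{F}\ell$, reconciling the two bundle constructions at the cusps requires precise geometric input from the theory of toroidal embeddings together with Scholze's analysis of the behavior of the Hodge--Tate period map near the boundary. Bridging these two viewpoints, and then running the \'etale-cohomological descent through almost purity with the necessary level of control, constitutes the core technical work of the argument.
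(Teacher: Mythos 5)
Your outline follows the same overall strategy as the paper (pull back equivariant bundles along $\pi_{HT}$ on the perfectoid minimal compactification, define classes in $\varprojlim_m H^{2n}(\cdot,\mathbb{Z}/\ell^m(n))$, descend to finite level, then compare with $c_n(\mathcal{E}^{\rm can})$ upstairs), but it has two genuine gaps. First, you assume the toroidal tower admits a perfectoid limit carrying a lift $\pi_{HT}^{\rm tor}$, and that at infinite level $\mathcal{E}^{\rm can}\simeq(\pi_{HT}^{\rm tor})^*\mathcal{E}_0$. The existence of such a perfectoid toroidal limit is exactly Hypothesis \ref{projtor}, which is known for the Siegel variety (Pilloni--Stroh) but not for general Hodge type; the paper's workaround is the auxiliary perfectoid space $Z_s$ of Proposition \ref{Zs}, which maps to a \emph{single} finite-level toroidal compactification and to the perfectoid minimal compactification, with $\phi^*$ injective on $\mathbb{F}_\ell$-cohomology (Proposition \ref{CY}). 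More importantly, the identification of $(\pi_{HT}^{\rm tor})^*\mathcal{E}_0$ (resp.\ $Q^*[\mathcal{E}]_p$) with the canonical extension is precisely the theorem to be proved (Theorems \ref{compar2} and \ref{compar3}): it is established via Pilloni--Stroh's comparison for the standard representation on the Siegel side, extended to all semisimple bundles by tensor functoriality, and then extended across the boundary by a torsion-freeness argument starting from the Caraiani--Scholze comparison on the open part (Proposition \ref{compar}). You explicitly defer this as ``the core technical work,'' so the central step of the theorem is not supplied.

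Second, your descent to finite level is not correct as stated. Scholze's limit formula identifies cohomology of the perfectoid space with the colimit only for torsion coefficients, and the paper descends a $K_{p,r}$-invariant class to the \emph{given} level $K_r$ using that the transition maps in the tower are finite of $p$-power degree (hence prime to $\ell$) together with a quotient-scheme argument and normality of the minimal compactification (Proposition \ref{etaleinvariant}), with $\mathbb{Z}_\ell$ coefficients for $r\ge 1$ and only $\mathbb{Q}_\ell$ at level $K$. Your alternative -- that the class lives at ``some'' finite level and that ``Chern classes are characterized by their restriction to the open part'' -- fails: the restriction $H^{2n}(S^{\rm min}_{\bar{\mathbb{Q}}_p},\mathbb{Q}_\ell(n))\to H^{2n}(S_{\bar{\mathbb{Q}}_p},\mathbb{Q}_\ell(n))$ is in general not injective (there can be classes supported on the boundary), so restriction to the open Shimura variety does not pin down the descended class, and Hecke functoriality alone does not repair this. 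The well-definedness at a specified level, and the Galois/Hecke statements with integral control, all hinge on the invariant-descent mechanism you replaced by this incorrect principle.
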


Here, as in \cite[Lemma 1.15]{EH17}, the {\it volume character} is the character by which the Hecke algebra acts on the constant functions.  The theorem can easily be extended to Shimura varieties of abelian type (see Remarks \ref{abeliantype}) but we leave the precise statement to the reader.

As the classes of  $\mathcal{E}^{\rm can} $ are even defined in continuous $\ell$-adic cohomology of $S^{\rm tor}_{ \bar{\mathbb{Q}}}$, where $E\hookrightarrow \bar {\mathbb Q}$ is an algebraic closure,  they are in particular invariant under the Galois group of $E$.  The classes we construct  rely on $p$-adic geometry and have no reason a priori not to depend on the chosen $p$, let alone to lie in $H^{2n}(S_{\bar{\mathbb{Q}}} ^{\rm min}, \mathbb{Q}_\ell(n))^{G_E}$ via the isomorphism
$H^{2n}(S_{\bar{\mathbb{Q}}_p} ^{\rm min}, \mathbb{Q}_\ell(n)) = H^{2n}(S_{\bar{\mathbb{Q}}} ^{\rm min}, \mathbb{Q}_\ell(n))$.

\medskip

We now describe the method of proof. The main tool used is the existence of a perfectoid space  $P^{\rm min}$ above\footnote{We are using a simplified notation here for the space denoted ${}_{K^p}\CS(G,X)^{\rm min}$ in the body of the text.  Automorphic vector bundles are also denoted $[\CE]$ rather than $\CE$, and the superscript $^{\rm tor}$ is absent in most cases.} $S^{\rm min}$ which has been constructed by Peter Scholze in \cite[Theorem~4.1.1]{Sch15} for Shimura varieties of Hodge type, together with a Hodge-Tate period map  $\pi_{HT}: P^{\rm min}\to \hat{\mathcal{X}}$ 
(see \cite[Theorem~2.3]{CS17} for the final form)   with values in the adic space of the compact dual variety defined over the completion of $\bar{\mathbb{Q}}_p$.  
 This allows us to define  {\it vector bundles} $\pi_{HT}^*(  \mathcal{E}  )$  on $P^{\rm min}$  where $\mathcal{E}$ are equivariant vector bundles on $\hat{\mathcal{X}}$ defining the automorphic bundles.  So while the bundles $\mathcal{E}$ do not extend to $S^{\rm min}$, they do on  Scholze's limit space $P^{\rm min}$. 
 
On the other hand, suppose $S$ is the Siegel modular variety.  Then Pilloni and Stroh \cite[Corollaire~1.6]{PS16} have constructed a perfectoid space  $P^{\rm tor}$ 
above $S^{\rm tor}$.   On $P^{\rm tor}$ one has the pullback of the bundles  $\pi_{HT}^*(\mathcal{E} )$ and the pullback via the tower defining the perfectoid space of the canonical extensions $\mathcal{E}^{\rm can}$.  Theorem~\ref{compar2} asserts that the two pullbacks on $P^{\rm tor}$ are the same.  This argument generalizes to Shimura varieties satisfying a certain technical Hypothesis \ref{projtor}.  We expect Hypothesis \ref{projtor} to hold in general; however, for general Shimura varieties of Hodge type we provide a more indirect route in \S 3 to the same comparison (Theorem ~\ref{compar3}),  following a suggestion of Bhargav Bhatt.

Descent for cohomology with torsion coefficients enables  one to construct the classes as claimed in Theorem~\ref{thm:main}, see Appendix~\ref{sec:app}.

\medskip
The classes $c_n(\mathcal{E}) \in  H^{2n}(S_{\bar{\mathbb{Q}}_p} ^{\rm min}, \mathbb{Q}_\ell(n))$ map in particular to well-defined classes in $\ell$-adic intersection cohomology  $IH^{2n}(S_{\bar{\mathbb{Q}}_p} ^{\rm min}, \mathbb{Q}_\ell(n))$. On intersection cohomology  of $S^{\rm min}$ over $\bar{\mathbb{Q}}$, we identify the eigenspace under the volume character with the cohomology of the compact dual $\hat{X}$ of $S$ (Proposition~ \ref{Chern}).  If the classes in $IH^{2n}(S_{\bar{\mathbb{Q}}_p} ^{\rm min}, \mathbb{Q}_\ell(n))$, identified with 
$IH^{2n}(S_{\bar{\mathbb{Q}}} ^{\rm min}, \mathbb{Q}_\ell(n))$,
 descended to continuous $\ell$-adic intersection cohomology, we could try to apply the method developed in \cite{EH17} to show that the classes of $\mathcal{E}^{\rm can}$  in continuous $\ell$-adic cohomology on $S^{\rm tor}$ over $E$ die when $\mathcal{E}$ if flat. 
  This would be in accordance with \cite[Theorem~1.1]{EV02} where it is shown that the higher Chern classes of $\mathcal{E}^{\rm can}$ in the rational Chow groups on the Siegel modular variety vanish when $\mathcal{E}$ is flat. 
However, we are not  yet able to work with continuous $\ell$-adic intersection cohomology. We prove in Lemma~\ref{lem:flat}  the vanishing  $c_n(\mathcal{E}) =0$  for all $n>0$
 when $\mathcal{E}$ is flat.

\medskip

While we were writing the present note, it was brought to our attention that Nair  in  the unpublished manuscript  \cite{Nai14}  independently mentioned the possibility of using Scholze's Hodge-Tate morphism to construct Chern classes in the cohomology of minimal compactifications, see {\it loc.~cit.}~\S0.4. 
 
 \medskip
 {\it Acknowledgements}: This note is based on Peter Scholze's construction of a perfectoid space above the minimal compactification of Shimura varieties of Hodge type together with the Hodge-Tate period map.
 We thank him profoundly  for the numerous discussions on his theory, for his precise and generous answers to our questions. We thank Vincent Pilloni for explaining to us the content of \cite[Corollaire~1.6]{PS16} which  plays an important role in our note, and for pointing out some oversights in an earlier version of this note.  We thank David Hansen for providing the keys to navigating the rapidly expanding literature on perfectoid geometry; we thank Hansen again, as well as Laurent Fargues, for their patient and detailed explanations that allowed us to correct misunderstandings that one of the referees pointed out in an earlier version of this paper.  We thank Mark Goresky  for helpful  discussions,  Simon Pepin Lehalleur who brought the reference  \cite[\S0.4]{Nai14} to our attention, and Ana Caraiani, who helped us to understand the issues that forced us to take an unexpected detour in the proof of Proposition \ref{CY}.   We are deeply indebted to the two referees who helped us to improve our article with their precise and helpful comments. We apologize  for the work we unnecessarily gave them through our numerous  typos.   Finally, we are  grateful to Bhargav Bhatt for a number of  crucial suggestions, notably for the construction of $Z_s$ in \eqref{Zs}.

\section{Generalities on automorphic bundles } \label{sec:autobdl}

If $G$ is a reductive algebraic group over $\QQ$, by an {\it admissible irreducible representation} of $G(\ad)$ we mean an irreducible admissible $(\fg,K)\times G(\af)$-module, where $\fg$ is the complexified Lie algebra of $G$, $K \subset G(\RR)$ is a connected subgroup generated by the center of $G(\RR)$ and a maximal compact connected subgroup,   $G(\ad)$ ($G(\af )$)    is the group of (finite) ad\`eles of $G$.
If $\pi$ is such a representation then we 
 write
 $$\pi \simeq \pi_{\infty} \otimes \pi_f$$
 where $\pi_{\infty}$ is an irreducible admissible $(\fg,K)$-module and $\pi_f$ is an irreducible admissible representation of $G(\af)$.

Let $(G,X)$ be a Shimura datum:  $G$ is a reductive algebraic group over $\QQ$, and $X$ is a (finite union of) hermitian symmetric spaces, endowed with a transitive action of $G(\RR)$ satisfying a familiar list of axioms.   The compact dual $\hX$ of $X$ is a (projective) flag variety for $G$.  Thus one can speak of the category $\Vect_G(\hX)$ of $G$-equivariant vector bundles on $\hX$; the choice of a base point $h \in X \subset \hX$ determines an equivalence of categories
$\Rep(P_h) \simeq \Vect_G(\hX),$
where $P_h \subset G$ is the stabilizer of $h$, a maximal parabolic subgroup of $G$, and  for any algebraic group $H$,  $\Rep(H)$ denotes the tensor category of its representations. Let $K_h$ be the Levi quotient of $P_h$; the group $K_{h,\infty} := K_h(\CC)\cap G(\RR)$ can be identified with a maximal Zariski connected  subgroup of $G(\RR)$ that is compact modulo the center of $G$.  If $K \subset G(\af)$ is an open compact subgroup, we let ${}_KS(G,X)$ denote the  Shimura variety attached to $(G,X)$ at level $K$; it has a canonical model over the  {\it  reflex field} $E(G,X)$ which is a number field (see \cite[Corollaire~5.5]{ Del71}).  We always assume that $K$ is {\it neat}; then ${}_KS(G,X)$ is a smooth quasi-projective variety.

Furthermore, we use the notation and conventions of \cite[\S1.1--1.2]{EH17},
specifically  $\mathcal{E} \in {\rm Vect}(\hat X)$ the notation for the  vector  bundle on the compact dual,  $[\mathcal{E}]_K \in 
{\rm Vect}({}_KS(G,X))$  for the automorphic vector bundle associated to the underlying representation of the compact (mod center) group $K_{h,\infty}  \subset G(\mathbb{R})$, the stabilizer of a chosen point $h\in X$.  As in \cite{EH17}, we always assume that $K_h$ is defined over a CM field and that every irreducible representation of $K_h$ has a model rational over the CM field $E'_h$; in particular, $P_h$ is also defined over $E'_h$.  For the purposes of constructing Chern classes, we need only consider semisimple representations of $P_h$, which necessarily factor through representations of $K_h$.

The action of the Hecke algebra  $\mathcal{H}_K=\{T(g), g \in G(\af)\}$ 
is recalled in Section~\ref{sec:chern}. 
The Chern character  
$$ch_\hX:  \Vect_G(\hX) \ra CH(\hX)_\QQ  \to \oplus_i H^{2i}\left(\hX,\QQ(i)\right) $$
 induces  isomorphisms
\begin{equation}\label{01} 
K_0\left(\Rep(K_h)\right)\otimes_{K_0\left(\Rep(G)\right)} \QQ  \isoarrow CH(\hX)_{\QQ} \isoarrow \oplus_i H^{2i}\left(\hX,\QQ(i)\right)
\end{equation}
and analogously in $\ell$-adic cohomology.
On the other hand, the construction of automorphic vector bundles gives rise to a homomorphism
\begin{equation}\label{chK} 
ch_K:\left\{\begin{array}{rcl} \Vect_G(\hX)& \ra &  CH\left({}_KS(G,X)\right)_{\QQ,v}\\ 
\mathcal{E} &\mapsto & ch([\mathcal{E}]_K)\end{array}\right.
\end{equation}
where $_v$ indicates the eigenvectors for the volume character. 
If $\mathcal{E} \in \Vect_G(\hX)$  is in the image of  $\Rep(G)$ via the factorization
 $$\Rep (G)\to \Rep(K_h) \ra \Vect_G(\hX),$$
 then $[\mathcal{E}]$ is endowed with a natural flat connection, so its higher Chern classes in $\oplus_i H^{2i}({}_KS(G,X),\QQ(i))$ are  equal to $0$.  We obtain a morphism 
\begin{equation}
\label{02}
K_0(\Rep(K_h))\otimes_{K_0(\Rep(G))} \QQ  \to  \oplus_i H^{2i}\left({}_KS(G,X),\QQ(i)\right)_v \subset \oplus_i H^{2i}\left({}_KS(G,X),\QQ(i)\right),
\end{equation}
analogous to \eqref{01},
where the subscript $_v$ denotes a certain eigenspace for the action of the unramified Hecke algebra.

  \section{Chern classes for compactified Shimura varieties}
 
 \subsection{Toroidal and minimal compactifications, and canonical extensions of automorphic vector bundles} 
  Henceforward we assume  the Shimura variety ${}_KS(G,X)$ is  {\it not projective}; equivalently, the derived subgroup $G^{{\rm der}}$ of $G$ is isotropic over $\QQ$.  In this case the automorphic theory naturally gives information about Chern classes of canonical extensions on toroidal compactifications on the one hand; on the other hand, the $v$-eigenspace most naturally appears in intersection cohomology of the minimal compactification.   This has been worked out in detail  in the $\mathcal{C}^\infty$ and the $L_2$ theory by Goresky and Pardon in \cite{GP02}.  In what follows, we let    $ j_K: {}_KS(G,X)  \hookrightarrow {}_KS(G,X)^{\rm min}$ denote the minimal (Baily-Borel-Satake) compactification.  The minimal compactification is functorial, thus if $K' \subset K,$  there is  a unique morphism $_{K'}S(G,X)^{\rm min} \ra {}_KS(G,X)^{\rm min}$  extending the natural map of open Shimura varieties, and if $g \in G(\af),$ then the Hecke correspondence $T(g)$ extends canonically to a correspondence on ${}_KS(G,X)^{\rm min} \times {}_KS(G,X)^{\rm min}$   where the product is taken over the reflex field.  In particular, for any cohomology theory $H$ as above, we have Hecke operators 
  $$T(g) \in {\rm End}\big(H^*({}_KS(G,X)^{\rm min})\big).$$

  The minimal compactification is always singular, except when $G^{ad}$ is a product of copies of $\PGL(2)$,  and the automorphic vector bundles do not in general extend as bundles to  ${}_KS(G,X)^{\rm min}$.  Thus the classical theory does not automatically  attach Chern classes to automorphic vector bundles on non-proper Shimura varieties in some cohomology theory
  $H^{*}_?({}_KS(G,X)^{\rm min},*)$.

  On the other hand, there is a large collection of {\it toroidal compactifications}  ${}{}_KS(G,X) \hookrightarrow  {}{}_KS(G,X)_{\Sigma}$  indexed by combinatorial data $\Sigma$  (see \cite[III, \S6, Main Theorem]{AMRT75} for details; the adelic construction is in \cite{Har89, Pin90}). The set of $\Sigma$ is adapted to the level subgroup $K$.  It is partially ordered by {\it refinement}:  if $\Sigma'$ is a refinement of $\Sigma$, then there is a natural proper morphism 
  $$p_{\Sigma',\Sigma}: {}{}_KS(G,X)_{\Sigma'} \ra {}{}_KS(G,X)_{\Sigma}$$
  extending the identity map on the open Shimura variety.  Any two $\Sigma$ and $\Sigma'$ can be simultaneously refined by some $\Sigma''$.   Further, the open embedding $ 
   {}{}_KS(G,X)  \hookrightarrow {}{}_KS(G,X)_{\Sigma} ,$ is completed to a diagram
  \ga{2.1}{
  \vcenter{\vbox{\xymatrix{
  &  {}{}_KS(G,X)_{\Sigma} \ar[d]^{\varphi_\Sigma}\\ 
\ar@{^{(}->}[ru]^{j_{K, \Sigma}}   {}{}_KS(G,X) \ar@{^{(}->}[r]^{j_K}&  {}{}_KS(G,X)^{\rm min}
  }}
  }}
  where $\varphi_\Sigma$ is a proper morphism, which is an isomorphism on  ${}{}_KS(G,X) $.  Now  assume that $K$ is a {\it neat} open compact subgroup of $G(\af)$, in the sense of  \cite{Har89} or \cite[0.6]{Pin90}.  In that case, we can choose $\Sigma$ so that  ${}_KS(G,X)_\Sigma$ is smooth and projective, and the boundary divisor has normal crossings.
{\it We do so unless we specify otherwise.}  With these conventions, $\varphi_\Sigma$ is always a desingularization of the minimal compactification, which is constructed as in \cite{AMRT75}, {\it loc.cit.}; moreover, for any refinement $\Sigma'$ 
 of $\Sigma$, $ {}_KS(G,X)_{\Sigma'}$ is again smooth and projective.    
  
Mumford proved in \cite[Theorem~3.1]{Mum77} that, if $\CE \in {\rm Vect}^{\rm ss}_G(\hX)$ (recall  from \cite{EH17} that the upper index $^{\rm ss}$ stands for semi-simple) the automorphic vector bundle $[\CE]{}_K$ on ${}_KS(G,X)$ admits a {\it canonical extension $[\CE]{}_K^{\rm can}$} to ${}_KS(G,X)_{\Sigma}$; we write  $[\CE]{}_K^{\Sigma}$ if we want to emphasize the toroidal data.  The adelic construction is carried out in Section 4 of \cite{Har89}, where Mumford's result was generalized to arbitrary $\CE \in {\rm Vect}_G(\hX)$.  In particular it was shown there that if $\CE = (G \times W)/P_h$, 
 where $W$ is the restriction to $P_h$ of a representation of $G$, the action of $P_h$ is diagonal, and we have identified $G/P_h$ with $\hX$, then $[\CE]{}_K$ is a vector bundle on ${}_KS(G,X)$ with a flat connection, and its canonical extension $[\CE]{}_K^{\rm can}$ on ${}_KS(G,X)_{\Sigma}$ is exactly Deligne's canonical extension.  In particular,  the connection has logarithmic poles along ${}_KS(G,X)_{\Sigma} \setminus {}_KS(G,X)$.
 Moreover, if $\Sigma'$ is a refinement of $\Sigma$, then 
 $$p^*_{\Sigma',\Sigma}[\CE]{}_K^{\Sigma} = [\CE]{}_K^{\Sigma'}$$
 (see \cite[Lemma~4.2.4]{Har90}),
which  in particular implies 
 $$p_{\Sigma',\Sigma,*}[\CE]{}_K^{\Sigma'} = Rp_{\Sigma',\Sigma,*}[\CE]{}_K^{\Sigma'} = [\CE]{}_K^{\Sigma}.$$  
 Finally, for any fixed $\Sigma$, 
 $$\CE \ra [\CE]{}_K^{\rm can}$$ 
  is a monoidal functor from ${\rm Vect}_G(\hX)$ to the category of vector bundles on ${}_KS(G,X)_\Sigma$.    
  
  Unfortunately, 
the $g \in G(\af)$ generally permute the set of $\Sigma$ and thus the Hecke correspondences $T(g)$ in general {\it do not} extend to correspondences on  a given ${}_KS(G,X)_\Sigma$.   Thus the arguments of \cite{EH17}, which are based on the study of the $v$-eigenspace for the Hecke operators in the cohomology of ${}_KS(G,X)$, cannot be applied directly to prove vanishing of higher Chern classes of $ [\CE]{}_K^{\rm can}$ 
in continuous $\ell$-adic cohomology of ${}_KS(G,X)_\Sigma$  over its reflex field, 
when $[\CE]_K$ is a flat automorphic vector bundle.

One can obtain information about the classes of $[\CE]_K$ on the open Shimura variety, but these lose  information.  From the standpoint of automorphic forms, the natural target of the topological Chern classes of automorphic vector bundles should be the {\it intersection cohomology} of the minimal compactification. 
We fix cohomological notation: $H^*(Z, \mathbb{Q})$, resp. $IH^*(Z, \mathbb{Q})$   denotes Betti resp. intersection cohomology of a complex variety $Z$, while $H^*(Z, \mathbb{Q}_\ell)$, resp. $IH^*(Z, \mathbb{Q}_{\ell})$ denote $\mathbb{Q}_\ell$-\'etale cohomology.

  \begin{rmk}  \label{rmk:hecke} {\rm The action of Hecke correspondences on Betti intersection cohomology $IH({}_KS(G,X), \mathbb{Q})$  are defined analytically by reference to Zucker's conjecture.  For a purely geometric construction of the action of Hecke correspondences on $IH({}_KS(G,X), \mathbb{Q})$ and thus on 
    $IH({}_KS(G,X), \mathbb{Q}_\ell)$ see \cite[(13.3)]{GM03} (the argument applies more generally to {\it weighted cohomology} as defined there).}
  \end{rmk}

The following statement generalizes Proposition~1.20 of \cite{EH17} and is proved in the same way.  
 \begin{prop}\label{isomQ}   There is a canonical isomorphism of algebras
 \ga{}{H^*(\hX,\QQ) \isoarrow IH^*({}_KS(G,X)^{\rm min},\QQ)_v \notag\\ 
 H^*(\hX,\QQ_\ell) \isoarrow IH^*({}_KS(G,X)^{\rm min},\QQ_\ell)_v.\notag
 }
 \end{prop}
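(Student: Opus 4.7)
The plan is to adapt the proof of \cite[Proposition~1.20]{EH17} from the proper setting to the non-proper one, replacing Matsushima's isomorphism by Zucker's conjecture together with the discrete spectrum decomposition of $L^2$ cohomology. The map would be built from Chern classes: for $\CE \in \Vect_G(\hX)$, the canonical extension $[\CE]_K^{\rm can}$ on a smooth projective toroidal compactification has Chern classes in $H^{2i}({}_KS(G,X)_\Sigma, \QQ(i))$, and via the decomposition theorem applied to the proper birational map $\varphi_\Sigma$ they descend to classes in the intersection cohomology of ${}_KS(G,X)^{\rm min}$, independently of $\Sigma$ thanks to the refinement identity $p^*_{\Sigma',\Sigma}[\CE]_K^\Sigma = [\CE]_K^{\Sigma'}$ recalled in the excerpt. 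Composing with \eqref{01} gives the candidate algebra homomorphism $H^*(\hX, \QQ) \to IH^*({}_KS(G,X)^{\rm min}, \QQ)$.

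Next I would verify that the image lies in the volume eigenspace. Since the classes are constructed functorially from $G$-equivariant data on $\hX$, the Hecke correspondences, which extend to the minimal compactification and act on intersection cohomology by Remark~\ref{rmk:hecke}, act on these classes through the character by which they act on constant functions, exactly as in \cite[Corollary~1.18]{EH17}. For surjectivity onto the $v$-eigenspace I would pass to $\CC$: Zucker's conjecture, together with the spectral decomposition of $L^2$ cohomology, identifies $IH^*({}_KS(G,X)^{\rm min}, \CC)$ with $\bigoplus_\pi m_{\rm disc}(\pi)\, H^*(\fg, K_{h,\infty}; \pi_\infty) \otimes \pi_f^K$, and the unramified Hecke eigenvalues on $\pi_f^K$ coincide with those of the volume character precisely when $\pi$ has the Satake parameters of the trivial representation. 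A rigidity argument for the trivial character of the unramified Hecke algebra then forces $\pi \simeq \CC$, whose contribution is $H^*(\fg, K_{h,\infty}; \CC) \cong H^*(\hX, \CC)$ by Chevalley--Eilenberg. Matching dimensions with the manifestly injective Chern class map closes the loop.

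To descend to $\QQ$ I would use that both sides carry natural $\QQ$-structures---the right-hand side via the middle perverse extension of $\QQ$ together with the geometric Hecke action of \cite[(13.3)]{GM03}, the left-hand side via \eqref{01}---and that the Chern class map is defined over $\QQ$. The $\ell$-adic statement then follows from Artin's comparison between Betti and $\ell$-adic intersection cohomology over $\Qbar$, which is Hecke-equivariant and hence respects the $v$-eigenspace decomposition; composing with $H^*(\hX, \QQ_\ell) \cong H^*(\hX, \QQ) \otimes_\QQ \QQ_\ell$ yields the $\ell$-adic isomorphism.

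The delicate step is isolating the trivial automorphic representation as the unique contributor to the $v$-eigenspace. Strong multiplicity one for the trivial unramified character need not be available in the generality of all reductive $G$ under consideration, so one must argue from extremality of the Satake parameters of the trivial representation among those compatible with unitarity at infinity and appearance in the discrete $L^2$ spectrum entering intersection cohomology, as in \cite{EH17}. This is where I would expect to spend most of the technical work; the rest is bookkeeping along the lines of the proper case.
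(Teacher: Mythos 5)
Your overall skeleton — Zucker's conjecture plus the $L^2$-decomposition \eqref{matsu}, isolation of the trivial representation in the $v$-eigenspace, and passage to $\QQ_\ell$ by the comparison isomorphism — matches the paper. One point where you overcomplicate: the paper does not need any extremality-plus-unitarity or multiplicity-one argument to isolate the trivial representation. By the Satake parametrization, at each place $q$ unramified for $K$ the trivial representation is the \emph{unique} spherical representation of $G(\QQ_q)$ on which the local Hecke algebra acts through the local volume character; hence $(\pi_f^K)_v \neq 0$ forces $\pi_q$ trivial at all such $q$, and weak approximation then forces $\pi$ itself to be trivial. After that, \eqref{HHv} and the standard computation $H^*(\fg,K_h;\CC) = H^*(\hX,\CC)$ finish the identification over $\CC$.

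The genuine gap is in your construction of the map. You propose to define $H^*(\hX,\QQ) \ra IH^*({}_KS(G,X)^{\rm min},\QQ)_v$ by taking Chern classes of $[\CE]_K^{\rm can}$ on ${}_KS(G,X)_\Sigma$ and ``descending'' them to intersection cohomology via the decomposition theorem. But the decomposition-theorem splitting is not canonical: the embedding $\varphi^*_{\mathcal L}$ of $IH^*$ into $H^*({}_KS(G,X)_\Sigma,\QQ)$ depends on a relative polarization, and its image may vary with $\mathcal{L}$ (see \cite[Example 2.9]{dCM05} and the discussion around \eqref{fact}); a class of $H^*({}_KS(G,X)_\Sigma,\QQ)$ therefore has no preferred $IH$-component, and you would have to prove independence of $\Sigma$ \emph{and} of the splitting. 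Worse, your closing step --- ``matching dimensions with the manifestly injective Chern class map'' --- assumes exactly the nontrivial analytic content of Goresky--Pardon (Theorem~\ref{gorpar}\,(iv)) and of Proposition~\ref{Chern}: that Chern classes inject into and account for the whole $v$-eigenspace. This is not manifest, and is in fact not known unconditionally: when $G^{\rm der}(\RR)$ has factors $\SO(2k-2,2)$ with $k>2$, the Euler classes $e_r$ are not known to lift, which is why the paper introduces the subalgebras with superscript $^+$ and notes in Remarks~\ref{abeliantype}\,(ii) that the relevant injectivity question is open. Proposition~\ref{isomQ} holds for all $G$ precisely because its canonical map is not built from Chern classes: following \cite[Proposition~1.20]{EH17}, the $v$-eigenspace is identified with the relative Lie algebra cohomology of the trivial representation, i.e.\ with invariant forms, which is canonically $H^*(\hX,\CC)$; rationality over $\QQ$ is then handled as in the proper case, and the $\ell$-adic statement follows from \cite[Section~6]{BBD82}. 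Building the isomorphism on Chern classes would either restrict the statement to the $^+$ part or leave a gap you cannot close for these groups.
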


 \begin{proof}  
 The second statement is deduced directly from the first one by the comparison isomorphism  \cite[Section~6]{BBD82}.
 As in the proof of the analogous fact in \cite{EH17}, it suffices to prove the corresponding statement over $\CC$.   By Zucker's Conjecture \cite{Loo88, SS90},   $IH^*({}_KS(G,X)^{\rm min}, \CC)$ is computed using {\it square-integrable} automorphic forms and Matsushima's  formula \cite[3.6, formula (1)]{Bor83}.  Say
 the space $\CA_{(2)}(G)$ of square-integrable automorphic forms on $G(\QQ)\backslash G(\ad)$ decomposes as the direct sum
 $$\CA_{(2)}(G) = \oplus_{\pi} m(\pi)\pi$$
 where $\pi$ runs over irreducible admissible representations of $G(\ad)$ and $m(\pi)$ is a non-negative integer, which is positive for a countable set of $\pi$ \footnote{Automorphic forms are understood to be $C^\infty$ and $K_h$-finite, and to satisfy the remaining conditions introduced by Harish-Chandra, hence no completion is needed.}.  Then
 $$IH^i({}_KS(G,X)^{\rm min},\CC) \isoarrow \oplus_{\pi} m(\pi)(H^i(\fg,K_h; \pi_{\infty})\otimes \pi_f^{K}).$$
 This implies
\begin{equation}\label{matsu}
IH^i({}_KS(G,X),\CC)_v \isoarrow \oplus_{\pi} m(\pi)(H^i(\fg,K_h; \pi_{\infty})\otimes (\pi_f^{K})_v),
\end{equation}
 where $(\pi_f^{K})_v$ is the eigenspace in $\pi_f^K$ for the volume character  of $\CH{}_K$.  Write $\pi_f = \otimes'_q \pi_q$, where $q$ runs over rational primes.  Now if $q$ is unramified for $K$ then $\pi_f^K = 0$ unless $\pi_q$ is spherical.   Now the trivial representation of $G(\QQ_q)$ has the property that it equals its spherical subspace and the corresponding representation of the local Hecke algebra is the (local) volume character.  By the Satake parametrization the trivial representation is the only spherical representation of $G(\QQ_q)$ with this property.  Thus   $(\pi_f^{K})_v \neq 0$ implies that $\pi_q$ is the trivial representation for all $q$ that are unramified for $K$.  It then follows from weak approximation that $\pi$ is in fact the trivial representation.  Thus for all $i$,
 \begin{equation}\label{HHv} IH^i({}_KS(G,X),\CC)_v \isoarrow H^i(\fg,K_h; \CC).\end{equation}
 But this is equal to $H^i(\hX, \CC)=IH^i(\hX,\CC)$ by a standard calculation; see \cite[Remark~16.6]{GP02}.  The rest of the proof follows as in the proof of  \cite[Proposition~1.20]{EH17}.
  \end{proof}

The point of the preceding proposition is that the Chern classes of automorphic vector bundles on the {\it non-proper} Shimura variety ${}_KS(G,X)$ are represented by square-integrable automorphic forms,  more precisely, by the differential forms on $G(\QQ)\backslash G(\ad)$ that are invariant under the action of  $G(\ad)$. In other words, the only $\pi$ that contributes is the space of constant functions on $G(\QQ)\backslash G(\ad)$, which are square integrable modulo the center of $G(\RR)$.  Thus $IH^{2*}({}_KS(G,X)^{\rm min},\mathbb{Q}(*))_v$ can be viewed as $L_2$-Chern classes of automorphic vector bundles. In addition 
 $IH^{2*+1}({}_KS(G,X)^{\rm min},\mathbb{Q}(*))_v=0.$
Although most automorphic vector bundles do not extend  as bundles to the minimal compactification ${}_KS(G,X)^{\rm min}$, it was proved by Goresky and Pardon in \cite[Main Theorem]{GP02}  that
under the natural homomorphism
\begin{equation}
\label{HIH} H^*({}_KS(G,X)^{\rm min},\CC)_v \ra IH^*({}_KS(G,X)^{\rm min},\CC).
\end{equation}
 the classes in 
$IH^*({}_KS(G,X)^{\rm min},\CC)_v$ lift canonically, as differential forms, to  ordinary singular cohomology $H^*({}_KS(G,X)^{\rm min},\CC)_v$.

\medskip

 We use the notation of the diagram~\eqref{2.1}. 
 Let $K$ be a neat open compact subgroup of $G(\af)$, as above.  Let $[\CE]_K$ be the automorphic vector bundle on ${}_KS(G,X)$ attached to the homogeneous vector bundle $\CE \in {\rm Vect}_G(\hX).$  Let $$c_n([\CE]^{\rm can}_K) \in H^{2n}({}_KS(G,X)_\Sigma,\mathbb{Q}(n)), \  
 c_n([\CE]_K) \in H^{2n}({}_KS(G,X),\mathbb{Q}(n))$$ 
 be the Chern classes in Betti cohomology. 
 The following  theorem summarizes the main results of the article \cite{GP02}.

 \begin{thm}\label{gorpar}   \begin{itemize}
 \item[\rm (i)]   There are canonical  classes $c_n([\CE]_K)^{GP} \in
  H^{2n}({}_KS(G,X)^{\rm min},\CC)$ such that
$$\varphi_\Sigma^*(c_n([\CE]_K)^{GP})=c_n([\CE]^{\rm can}_K) \in 
  H^{2n}({}_KS(G,X)_{\Sigma},\QQ(n)).$$
   \item[\rm (ii)]  In particular
 \ga{}{j_K^*(c_n([\CE]_K)^{GP}) = c_n([\CE]_K)  \in  H^{2n}({}_KS(G,X),\QQ(n)). \notag}
   \item[\rm (iii)]  The classes $c_n([\CE])^{GP}$ (we drop the subscript $K$) are represented by square integrable automorphic forms.  More precisely, the image of $c_n([\CE])^{GP}$ under the morphism of \eqref{HIH} is contained in the subspace of intersection cohomology that corresponds, under the isomorphism \eqref{matsu}, to the relative Lie algebra cohomology of $\oplus \pi$ with   trivial archimedean component $\pi_{\infty}$.
  \item[\rm (iv)] 
 Suppose $G^{\rm der}(\RR) = \prod_i G_i$ is a product of simple Lie groups, all of which are of the form $\SP(2n,\RR)$, $SU(p,q)$, $\SO^*(2n)$,   or $\SO(p,2)$ with $p$ odd \footnote{Some of the $G_i$ can also be of the form $\SO(2,2)$, as in the statement of \cite{GP02}.}. 
 The $\mathbb{Q}$-subalgebra   $$H^{2*}_{{\rm Chern}}({}_KS(G,X)^{\rm min}) \subset H^{2*}({}_KS(G,X)^{\rm min},\CC)$$    generated by the $c_n([\CE])^{GP}$, as $\CE$ varies over ${\rm Vect}_G(\hX),$   is endowed with a naturally defined surjective homomorphism
 $$H^{2*}_{\rm Chern}({}_KS(G,X)^{\rm min}) \overset{h}\to H^{2*}(\hX,\QQ(*))$$
 such that for any $n$ the  diagram 
\ga{}{\CD      H^{2n}_{{\rm Chern}}({}_KS(G,X)^{\rm min}) @>>{\rm natural \ map}> IH^{2n}({}_KS(G,X)^{\rm min},\QQ(n))_v \\
                        @VhVV                           @A\isoarrow A{\rm Prop}.\ref{isomQ}A \\
                        H^{2n}(\hX,\QQ(n)) @>=>>   H^{2n}(\hX,\QQ(n))
                        \endCD \notag}
commutes. 
 In other words, the isomorphism of Proposition~\ref{isomQ} tensor $\CC$  factors through 
 \ga{}{H^{2n}({}_KS(G,X)^{\rm min}, \CC) \xrightarrow{{\rm natural \ map}} IH^{2n}({}_KS(G,X)^{\rm min},\CC). \notag}\

 \end{itemize}
 
   \end{thm}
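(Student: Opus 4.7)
The plan is to follow the Chern-Weil construction of Goresky-Pardon directly. One starts with a $K_{h,\infty}$-invariant Hermitian inner product on the fiber of $\CE$ at the base point $h$; spreading it by $G(\RR)$-equivariance gives a smooth Hermitian metric on $[\CE]_K$ over ${}_KS(G,X)$, and the associated Chern connection produces $G(\RR)$-invariant Chern-Weil differential forms on the open Shimura variety. These invariant forms do not extend continuously across the boundary strata of ${}_KS(G,X)^{\rm min}$, and the central technical input is the Goresky-Pardon modification: using the stratification by rational parabolic subgroups, one constructs on a neighborhood of each boundary stratum a piecewise-smooth ``controlled'' connection by combining the canonical connection with projectors onto invariants of the unipotent radical of the relevant parabolic. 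The resulting Chern-Weil forms do extend continuously to the minimal compactification, and define the classes $c_n([\CE]_K)^{GP} \in H^{2n}({}_KS(G,X)^{\rm min},\CC)$.

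For (i), Mumford's canonical extension $[\CE]^{\rm can}_K$ is characterized by a connection with logarithmic poles along the boundary divisor of ${}_KS(G,X)_\Sigma$, whose Chern-Weil forms are smooth differential forms representing $c_n([\CE]^{\rm can}_K)$. A local computation in coordinate charts provided by the torus embeddings corresponding to the fan $\Sigma$ shows that, after pullback by $\varphi_\Sigma$, the Goresky-Pardon controlled connection and the logarithmic connection differ by a term whose Chern-Weil contribution is exact; this yields (i). Part (ii) then follows at once by restriction to the open stratum, where both classes are represented by the original invariant Chern-Weil form. Part (iii) is essentially built into the construction: the controlled Chern-Weil forms have polynomial decay matched to the volume form on the locally symmetric space, so they lie in the $L^2$-de Rham complex, and Zucker's conjecture identifies their classes with elements of intersection cohomology. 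Since the forms are $G(\ad)$-invariant modulo a boundary correction that does not alter the cohomology class, the Matsushima decomposition \eqref{matsu} forces them to pair nontrivially only with representations $\pi$ whose archimedean component $\pi_\infty$ has trivial $(\fg,K_h)$-cohomology class, i.e.\ whose contribution is that of the trivial representation.

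For (iv), the homomorphism $h$ is defined as the composition of the natural map $H^{2*}_{\rm Chern} \to IH^{2*}({}_KS(G,X)^{\rm min},\QQ)_v$ with the inverse of the isomorphism of Proposition \ref{isomQ}, so the commutativity of the diagram is automatic from the definition. The real content is the \emph{surjectivity} of $h$, which reduces to the statement that $H^{2*}(\hX,\QQ)$ is generated as an algebra by Chern classes of $G$-equivariant vector bundles on $\hX$. For the real groups listed in the hypothesis the compact dual $\hX$ is a classical flag variety (a Lagrangian Grassmannian, a complex Grassmannian, an orthogonal Grassmannian, or an odd-dimensional quadric) whose cohomology ring is generated by Chern classes of tautological equivariant bundles; this is the main obstacle, and is precisely the reason the hypothesis is imposed, since for the remaining Hermitian symmetric types (notably the even-dimensional quadrics associated to $\SO(p,2)$ with $p$ even and the exceptional cases) the middle cohomology of $\hX$ is not generated by equivariant Chern classes.
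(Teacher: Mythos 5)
The paper does not actually prove this theorem: it is stated explicitly as a summary of the main results of \cite{GP02}, so what you propose is in effect a one-page compression of that entire article. Your outline (invariant Chern--Weil forms on the open variety, modified near each boundary stratum using the rational parabolic structure and projection to unipotent invariants) is indeed the Goresky--Pardon strategy, but at two points you assume away exactly the hard content. The patched Chern forms do not simply ``extend continuously'' to the minimal compactification, and neither their square-integrability nor the identification of their $L^2$-classes with the invariant classes is automatic; these are obtained in \cite{GP02} only through the explicit estimates on differential forms alluded to in the introduction of this paper, so (iii) is not ``essentially built into the construction''. Likewise the equality in (i) between $\varphi_\Sigma^*$ of the lifted classes and the Chern classes of Mumford's canonical extension is itself one of the main theorems of \cite{GP02}, not a routine computation in toric charts.

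The genuine error is in (iv). You reduce the surjectivity of $h$ to the statement that $H^{2*}(\hX,\QQ)$ is generated by Chern classes of $G$-equivariant bundles, and you claim the hypothesis on $G^{\rm der}(\RR)$ is imposed because this generation fails for the excluded types. That cannot be the mechanism: by \eqref{01}, in the setting of this paper the rational cohomology of $\hX$ is generated by Chern classes of equivariant bundles for \emph{every} compact dual, so on your reading the hypothesis would be superfluous. Your reduction silently assumes that the image of each $c_n([\CE])^{GP}$ in $IH^{2n}({}_KS(G,X)^{\rm min},\QQ(n))_v \cong H^{2n}(\hX,\QQ(n))$ is the class $c_n(\CE)$; establishing this comparison (equivalently, that the Goresky--Pardon form is $L^2$-cohomologous to the invariant Chern form) is the estimate-driven, case-by-case part of \cite{GP02}, and it is exactly where the restriction on $G^{\rm der}(\RR)$ enters. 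As explained in \S\ref{evenquad}, following \S~16.6 of \cite{GP02}, for factors $\SO(2k-2,2)$ with $k>2$ the obstruction is that Goresky and Pardon were unable to show that the Euler classes $e_r$ lift to $H^{*}({}_KS(G,X)^{\rm min},\CC)$ --- not that $H^*(\hX,\QQ)$ lacks Chern-class generators --- and this is precisely why the paper introduces the subalgebras with superscript $^+$ in Definition \ref{plusdef} and states Theorem \ref{gorpar2} separately for that case.
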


  In the next sections, we  use Peter Scholze's perfectoid geometry and  his Hodge-Tate morphism to prove an analogue of the Goresky-Pardon theorem for $\ell$-adic cohomology of Shimura varieties of abelian type.    
  
  \subsubsection{Even-dimensional quadrics}\label{evenquad}  Theorem \ref{gorpar} excludes the case where $G^{\rm der}(\RR)$ contains a factor isomorphic to $\SO(2k-2,2)$ with $k > 2$.  
  Assuming $G^{\rm der}$ is $\QQ$-simple, there is then a totally real field $F$, with $[F:\QQ] = d$, say, such that $\hat{X}_{\CC}$ is isomorphic to a product $Q_n^d$ of $d$ smooth projective complex quadrics $Q_n$, each of dimension $n = 2k-2$.  
  The reason for this exclusion is explained in \S 16.6 of \cite{GP02}.  Following \S 16.5 of \cite{BH58}, we consider the cohomology algebra $A := H^*(Q_n,C)$ of a complex quadric $Q_n = \SO(n+2)/\SO(n)\times \SO(2)$ of dimension $n$, with $C = \CC$ or $\QQ_\ell$.  
  Then $A$ contains a subalgebra $A^+$ isomorphic to $C[c_1]/(c_1^{n+1})$, with $c_1 \in H^2(Q_n,C)$ given by the Chern class of the line bundle corresponding to the standard representation of $\SO(2)$.  (There is a misprint in \cite{GP02}; the total dimension of $A^+$ as $C$-vector space is $n+1$, not $n$.)  
  Moreover there is an isomorphism
  \begin{equation}\label{euler}    A = A^+[e]/(e^2 - c_1^n) \end{equation}
 with $e$ the Euler class of the the vector bundle arising from the standard representation of $\SO(n)$.  In \cite{BH58}, the class $c_1$ is denoted $x_1$ and the class $e$ is denoted $\prod_{i = 2}^n x_i$; the equation $e^2 = c_1^n$ then follows immediately for formula (6) of \cite{BH58}.   
 
More generally, if $\hat{X}_{\CC}$ is isomorphic to $ Q_n^d$ as above, we denote by $c_{1,r} \in 
H^{2}(\hat{X}_{\CC},C(1))$ the class of the line bundle defined above corresponding to the $r$-th factor of $Q_n^d$, $r = 1, \dots, d$, and let $e_r$ correspond to the Euler class in the $r$-th factor.
 The isomorphism of Proposition~\ref{isomQ} is valid in all cases, and Goresky and Pardon showed that the image in $IH^{2j}({}_KS(G,X)^{\rm min},\QQ(j))_v$ of the classes $c_{1,r}^j \in H^{2j}(\hat{X}_{\CC},C(j))$ lift canonically to the cohomology of the minimal compactification (the twist $j$ here is unnecessary for $C=\mathbb{C}$, 
 we write it for the case   $C=\mathbb{Q}_\ell$).
 However, when $n = 2k-2 > 2$, they were unable to show that the $e_r$ lift.  We can extend the statement of Theorem \ref{gorpar} with the following definition.
 \begin{defn}\label{plusdef}  {\rm (i)  Suppose $G^{\rm der}$ is $\QQ$-simple and $G^{\rm der}(\RR) \simeq \SO(2k-2,2)^d$ for some integer $d$, with $k > 2$.  Suppose $C = \CC, \QQ_\ell$, or $\QQ$.  Define
  \begin{equation}\label{plus}
  \begin{aligned}  H^*(\hat{X},C)^+ &\subset H^*(\hat{X},C); \\  IH^{*}({}_KS(G,X)^{\rm min},\QQ(n))^+_v &\subset  IH^{*}({}_KS(G,X)^{\rm min},\QQ(n))_v
  \end{aligned}
 \end{equation}
 to be the subalgebras generated respectively by the classes $c_{1,r}$, $r = 1, \dots, d$, and their images under the isomorphism of Proposition~\ref{isomQ}.  We similarly define
 \begin{equation}{\label{Kplus}} [K_0(\Rep(K_h))\otimes_{K_0(\Rep(G))} C]^+ \subset K_0(\Rep(K_h))\otimes_{K_0(\Rep(G))} C \end{equation}
 to be the inverse image of $H^*(\hat{X},C)^+$ under the isomorphism \eqref{01}.  
 
 (ii) If $G$ satisfies condition (iv) of Theorem \ref{gorpar}, we let 
 $$H^*(\hat{X},C)^+ = H^*(\hat{X},C);$$  $$IH^{*}({}_KS(G,X)^{\rm min},\QQ(n))^+_v =  IH^{*}({}_KS(G,X)^{\rm min},\QQ(n))_v;$$ 
$$ [K_0(\Rep(K_h))\otimes_{K_0(\Rep(G))} C]^+ = K_0(\Rep(K_h))\otimes_{K_0(\Rep(G))} C.$$

(iii)  In either case, we define 
$${\rm Vect}_G(\hat{X})^+ = ch_{\hat{X}}^{-1}(H^*(\hat{X},C)^+) \subset K_0({\rm Vect}_G(\hat{X})).$$
We continue to use the notation $\mathcal{E}$ to denote (virtual) bundles in $Vect_G(\hat{X})^+$.}
 \end{defn} 
 
\begin{thm}\label{gorpar2}  Suppose $G$ satisfies either {\rm (i)} or {\rm (ii)} of Definition~\ref{plusdef}.  Then the conclusions of Theorem~\ref{gorpar} hold with $H^*(\hat{X},C)$, ${\rm Vect}_G(\hat{X})$, and
$IH^{*}({}_KS(G,X)^{\rm min},\QQ(n))_v$  replaced by the versions with superscript $^+$, and for $\mathcal{E} \in Vect_G(\hat{X})^+$.
\end{thm}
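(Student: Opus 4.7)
The plan is to reduce Theorem~\ref{gorpar2} to Theorem~\ref{gorpar} together with one verification at the level of line bundles. Case (ii) of Definition~\ref{plusdef} is tautological, so assume case (i): $G^{\rm der}(\RR) \simeq \SO(2k-2,2)^d$ with $k>2$, and $\hX_\CC \simeq Q_n^d$ with $n=2k-2$. Parts (i) and (ii) of Theorem~\ref{gorpar} are proved in \cite{GP02} without any restriction on $G$; applied to $\CE \in {\rm Vect}_G(\hX)^+ \subset {\rm Vect}_G(\hX)$ they give parts (i), (ii) of Theorem~\ref{gorpar2} verbatim. Parts (iii) and (iv) in the $^+$ form both amount to the assertion that for every $\CE \in {\rm Vect}_G(\hX)^+$ and every $n$,
\begin{equation*}
h\bigl(c_n([\CE])^{GP}\bigr) = c_n(\CE) \in H^{2n}(\hX,\CC)^+,
\end{equation*}
where $h$ is the composition of the natural map to $IH^{2n}({}_KS(G,X)^{\rm min},\CC)_v$ with the inverse of the isomorphism of Proposition~\ref{isomQ}. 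Once this is established, the image of $h^+$ lies in $H^{2*}(\hX,\QQ(*))^+$, surjectivity is automatic, and the diagram of part (iv) commutes by construction.

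The key observation is that ${\rm Vect}_G(\hX)^+ \subset K_0({\rm Vect}_G(\hX))$ is precisely the subring generated by the $G$-equivariant line bundles $L_r \to \hX$ attached to the characters of the $r$-th $\SO(2)$-factor of $K_h \simeq \SO(2k-2)^d \times \SO(2)^d \cdot Z$: the remaining irreducible representations of $K_h$ come from the $\SO(2k-2)$-factors and contribute Euler classes $e_r \in H^n(\hX,\CC)$ that lie outside $H^*(\hX,\CC)^+$. Hence every $\CE \in {\rm Vect}_G(\hX)^+$ is a virtual polynomial expression in the $L_r$, and since both $\CE \mapsto c_n([\CE])^{GP}$ and $\CE \mapsto c_n(\CE)$ satisfy the Whitney product formula (the former because the canonical extension is a monoidal functor and the GP construction is compatible with direct sums), the identification we need reduces to the line bundle case
\begin{equation*}
h\bigl(c_1([L_r]_K)^{GP}\bigr) = c_{1,r} \in H^{2}(\hX,\CC)^+.
\end{equation*}

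For this last identification I would unwind the definitions: by \cite{GP02} the class $c_1([L_r]_K)^{GP}$ is represented by the $L^2$-extension to ${}_KS(G,X)^{\rm min}$ of the $G$-invariant K\"ahler form $\omega_r$ attached to the canonical Hermitian metric on $L_r$. Under Matsushima's isomorphism \eqref{HHv}, $IH^*({}_KS(G,X)^{\rm min},\CC)_v$ is identified with the relative Lie algebra cohomology $H^*(\fg,K_h;\CC)$, and the further identification of the latter with $H^*(\hX,\CC)$ used in Proposition~\ref{isomQ} matches the class of $\omega_r$ with $c_1(L_r) = c_{1,r}$. The hard part is precisely this matching in the quadric case: Goresky--Pardon could not produce an explicit $L^2$-representative for the Euler class $e_r$, which is exactly why condition (iv) of Theorem~\ref{gorpar} excludes $\SO(2k-2,2)$ with $k>2$. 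The K\"ahler form $\omega_r$, by contrast, has manifestly square-integrable behavior near the boundary of ${}_KS(G,X)^{\rm min}$, so the line bundle identification is unobstructed; no Euler class ever enters, and the multiplicativity of the previous paragraph propagates the identification to every $\CE \in {\rm Vect}_G(\hX)^+$.
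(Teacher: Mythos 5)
Your overall direction (deduce the $^+$-version from the general parts of Goresky--Pardon plus a statement about the line bundles $L_r$) is reasonable, and indeed the paper offers no separate argument for Theorem~\ref{gorpar2}: it is treated as an immediate consequence of the GP results recalled in \S\ref{evenquad}, namely that the general construction of the classes $c_n([\CE])^{GP}$ and their identification in $IH_v$ with the compact-dual Chern classes hold for all $G$, while GP's lifting of the powers of $c_{1,r}$ supplies surjectivity onto $H^*(\hX,C)^+$. The gap in your proposal is the reduction step. By Definition~\ref{plusdef}\,(iii), $\Vect_G(\hX)^+$ is the \emph{full preimage} $ch_{\hX}^{-1}(H^*(\hX,C)^+)$ inside $K_0(\Vect_G(\hX))$; it is not the subring generated by the $L_r$. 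Your justification for that identification is incorrect: an irreducible complex representation of the $\SO(2k-2)$-factor does not in general ``contribute the Euler class'' --- $e_r$ is the Euler class of a real bundle, only $e_r^2=c_{1,r}^n$ is forced into the Chern classes of complexifications --- so there are classes in $\Vect_G(\hX)^+$ (for instance anything in the ideal coming from $\Rep(G)$, whose associated bundles are trivial on $\hX$, and bundles built from $\SO(2k-2)$-representations whose Chern character happens to lie in $C[c_{1,r}]$) that are not virtual polynomials in the $L_r$. Consequently the Whitney/multiplicativity argument starting from $h(c_1([L_r])^{GP})=c_{1,r}$ only establishes the key identity on a proper subring of $\Vect_G(\hX)^+$, whereas Theorem~\ref{gorpar2} asserts it for every $\CE\in\Vect_G(\hX)^+$. (Even the rational statement that, modulo the $\Rep(G)$-ideal, $\Vect_G(\hX)^+$ is generated by the $L_r$ would still leave you needing to know that GP classes of flat bundles die in $IH_v$ --- which is again an instance of the general identification you are trying to avoid.)

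The repair is to use, for \emph{all} $\CE$, the fact underlying Theorem~\ref{gorpar}\,(iii) (valid without restriction on $G$): the image of $c_n([\CE])^{GP}$ under \eqref{HIH} corresponds, via Proposition~\ref{isomQ}, to the invariant Chern form, i.e.\ to $c_n(\CE)\in H^{2n}(\hX,\CC)$. Granting this, Theorem~\ref{gorpar2} is immediate: for $\CE\in\Vect_G(\hX)^+$ the image lies in $H^*(\hX,C)^+$ by the very definition of $\Vect_G(\hX)^+$; surjectivity of $h$ onto $H^*(\hX,C)^+$ holds because the generators $c_{1,r}$ are the images of the GP classes of $L_r\in\Vect_G(\hX)^+$ (this is exactly the GP lifting statement quoted in \S\ref{evenquad}); and commutativity of the diagram is the same identity restated. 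Finally, your last paragraph, which re-derives the line-bundle identification from ``manifestly square-integrable'' behaviour of the K\"ahler form, is a sketch standing in for the actual estimates of \cite{GP02}; for that point you should simply cite \cite{GP02}, as the paper does, rather than claim it as evident.
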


The theorem, and its application in Proposition \ref{Chern} below, naturally extend to groups $G$ such that $G^{\rm der}$ is a product of groups of type (i) and (ii) in Definition \ref{plusdef}.  We omit the details.

\subsection{Perfectoid Shimura varieties and the Hodge-Tate morphism} \label{s:HT}
The results of the present section are entirely due to Scholze \cite{Sch15}, then to Caraiani-Scholze \cite{CS17}
and Pilloni-Stroh \cite{PS16}.

\medskip

 Fix a level subgroup  $K \subset G(\af)$.  Let $G(\af)\to G(\Qp), \ k\mapsto k_p$ be the projection.  
Denote by $K_p$ its projection to  $G(\Qp)$. 
  For $r \geq 0$ we let   $K_{p,r} \subset K_p$  be a decreasing family of subgroups of finite index, with $K_{p,r} \supset K_{p,r+1}$ for all $r$, and such that $\bigcap_r  K_{p,r} = \{1\}$.  Let $K_r = \{k \in K ~|~ k_p \in K_{p,r}\}$, and let $K^p = \cap_r K_r$.   We  identify $K^p$ with its projection to the prime-to-$p$ ad\`eles $G(\mathbf{A}_f^p)$; then $K^p$ is an open compact subgroup of $G(\mathbf{A}^p_f)$  called a ``tame level subgroup".  

We assume that the Shimura datum $(G,X)$ is of {\it Hodge type}.  Thus, up to replacing $K$ by a subgroup of finite index, ${}_KS(G,X)$ admits an embedding of Shimura varieties in a Siegel modular variety of some level attached to the Shimura datum $(\GSp(2g),X_{2g})$ for some $g$, where $X_{2g}$ is the union of the Siegel upper and lower half-spaces.  We let $\hat{X}_{2g}$ denote the compact dual flag variety of $X_{2g}$.  Let $C$ denote the completion of an algebraic closure $\Qpb$ of $\QQ_p$.    Denote by ${}_{K_r}\CS(G,X)$, resp. ${}_{K_r}\CS(G,X)^{\rm min}$
 the adic space over $C$ attached to ${}_{K_r}S(G,X)$, resp. ${}_{K_r}S(G,X)^{\rm min}$.   We assume that $K^p$ is contained in the principal congruence subgroup of level $N$ for some $N \geq 3$, in the sense explained in \S 4 of \cite{Sch15}.  
  We restate one of the main theorems of Scholze's article.    
 In \S 4.1 of \emph{loc. cit.} 
it is shown that there is a level $K'_r$ for $(\GSp(2g),X_{2g})$ such that $K_r=K'_r\cap G(\af)$, and such that the scheme theoretic image of ${}_{K_r}S(G,X)^{\rm min}$ in 
${}_{K'_r}S(\GSp(2g),X_{2g})^{\rm min}$ does not depend on the choice of $K'_r$. We denote it by 
$\overline{{}_{K_r}S(G,X)^{\rm min}}$, and by  $\overline{{}_{K_r}\CS(G,X)^{\rm min}}$ its associated $C$-adic space. 

 \begin{thm}[Theorem 4.1.1 in \cite{Sch15}]  \label{perfec2}
There is a  commutative diagram of morphisms of adic spaces \footnote{The projective limits on the right-hand side are not literally adic spaces; see Discussion \ref{limitdiscussion} for the precise meaning of this statement.}
\ga{perfec}{
\vcenter{\vbox{
\xymatrix{ \ar@{^{(}->}[d]_{j}  {}_{K^p}\CS(G,X) \ \ \ \ \  \ \ \ \sim & \ar@{^{(}->}[d]^{ \varprojlim_r j_r} \varprojlim_r   {}_{K_r}\CS(G,X)\\
 \overline{{}_{K^p}\CS(G,X)}^{\rm min}  \ \ \ \ \  \sim  &  \varprojlim_r \overline{{}_{K_r}\CS(G,X)}^{\rm min}.
}}
}}

Here, on the left side, $j$ is an open embedding of  perfectoid Shimura varieties over $C$,  on the right side, $\varprojlim_r j_r$ denotes the formal inverse system of open embeddings of  adic spaces over $C$, and the notation $\sim$  is defined precisely in  \cite[Definition~2.4.1]{SW13}.  
Moreover, there is a $G(\Qp)$-equivariant  Hodge-Tate morphism 
$$\pi_{HT}:  \overline{{}_{K^p}\CS(G,X)}^{\rm min} \ra {\hat{\mathcal{X}}}_{2g}$$
which is compatible with change of tame level subgroup $K^p$.
\end{thm}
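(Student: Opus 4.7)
The plan is to follow Scholze's strategy: first establish the theorem for the Siegel case $(\GSp(2g), X_{2g})$, where everything is anchored by the moduli interpretation of the universal abelian variety, and then deduce the Hodge-type case via a scheme-theoretic image argument. For the Siegel case, at finite level ${}_{K'_r}S(\GSp(2g),X_{2g})$ parametrizes principally polarized $g$-dimensional abelian varieties $A$ equipped with a prime-to-$p$ level structure of type $K'^p$ and full $K'_{p,r}$-level structure at $p$. Taking the inverse limit in $r$ produces a trivialization $\ZZ_p^{2g} \isoarrow T_p A$ of the $p$-adic Tate module of the universal abelian variety; tensoring with $\CO_C$ and applying the Hodge-Tate comparison yields a surjection $\CO_C^{2g} \twoheadrightarrow \omega_A$, equivalently a point of the Lagrangian Grassmannian $\hat{\mathcal{X}}_{2g}$. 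This defines the Hodge-Tate morphism on the open Shimura variety, and $G(\Qp)$-equivariance is automatic from the moduli-theoretic construction.

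The main obstacle is to show that the inverse limit $\varprojlim_r \overline{{}_{K_r}\CS(\GSp(2g),X_{2g})}^{\rm min}$ is represented, up to the equivalence $\sim$ of \cite[Definition~2.4.1]{SW13}, by an honest perfectoid adic space, and that $\pi_{HT}$ extends across the boundary. The decisive point is that one works with the \emph{minimal} compactification rather than a toroidal one, because the Hodge line bundle $\omega$ is \emph{ample} on ${}_{K_r}S(\GSp(2g),X_{2g})^{\rm min}$. Concretely, I would choose $t$ with $\omega^{\otimes t}$ very ample at some finite level and exploit that in the $p$-power tower the Hasse invariant acquires canonical $p$-power roots over the ordinary locus; a careful analysis of the anticanonical tower, combined with the ampleness of $\omega$, yields an affinoid perfectoid cover of the limit. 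Gluing these charts and verifying that the transition maps tilt correctly produces the perfectoid space. The extension of $\pi_{HT}$ across the boundary relies on the fact that $\omega$ itself extends to the minimal compactification and that its Hodge-Tate interpretation propagates by continuity.

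For the Hodge type case, I would fix a closed embedding of Shimura data $(G,X) \hookrightarrow (\GSp(2g), X_{2g})$ and for each $K_r$ a compatible lift $K'_r$ with $K_r = K'_r \cap G(\af)$. The independence of $\overline{{}_{K_r}S(G,X)^{\rm min}}$ from the choice of $K'_r$ is verified by intersecting over all admissible lifts, using that any two can be refined. A closed subspace of a perfectoid space whose defining ideal is closed under $p$-power roots is itself perfectoid, so the scheme-theoretic image of ${}_{K^p}\CS(G,X)$ inside the Siegel perfectoid space inherits a perfectoid structure, and $\pi_{HT}$ is obtained by restriction. The open embedding $j$ is then pulled back from the Siegel case, and the ``$\sim$'' equivalence of the two rows is a formal consequence of the compatibility between adic generic fibers and filtered inverse limits along affine transition maps. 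I expect the perfectoidness at the boundary of the Siegel minimal compactification to be the single hardest input, since it requires genuinely new ideas beyond the soft arguments available on the generic fiber.
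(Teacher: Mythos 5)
This statement is not proved in the paper at all: Theorem \ref{perfec2} is quoted as Theorem 4.1.1 of Scholze's article \cite{Sch15} (with the refinement of the target of $\pi_{HT}$ supplied by \cite{CS17}), so there is no internal argument to compare yours against. What you have written is, in outline, the strategy of Scholze's original proof: treat the Siegel case first, where the limit over $p$-power level trivializes the Tate module of the universal abelian variety and the Hodge--Tate filtration gives the period map to the Lagrangian Grassmannian $\hat{\mathcal{X}}_{2g}$; obtain perfectoidness of the minimal compactification from the anticanonical tower, canonical $p$-power roots of the Hasse invariant, and the ampleness of $\omega$ on ${}_{K_r}S(\GSp(2g),X_{2g})^{\rm min}$; then deduce the Hodge-type case by passing to the closure of the Shimura variety inside the Siegel perfectoid space. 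As a roadmap this is faithful to \cite{Sch15}.

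As a proof, however, it leaves the genuinely hard steps as assertions. First, the anticanonical-tower argument only produces affinoid perfectoid charts over a strict neighborhood of the (anticanonical lift of the) ordinary locus; Scholze needs the $G(\mathbb{Q}_p)$-action and the geometry of the Hodge--Tate period map to translate these charts so that they cover all of $\overline{{}_{K^p}\CS}^{\rm min}$, and your sketch omits this entirely. Second, the extension of $\pi_{HT}$ over the boundary is not a soft continuity statement deduced from the extension of $\omega$: in \cite{Sch15} it is built by gluing over an affinoid cover of the flag variety using the fake Hasse invariants pulled back from $\hat{\mathcal{X}}_{2g}$, and this construction is intertwined with the proof of perfectoidness rather than a corollary of it. Third, for the Hodge-type case the input is Scholze's theorem that Zariski closed subsets of affinoid perfectoid spaces are strongly Zariski closed and hence carry a canonical perfectoid structure; your formulation (``defining ideal closed under $p$-power roots'') replaces a nontrivial theorem by a hypothesis that is not verified for the scheme-theoretic image in question. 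So your proposal correctly identifies the architecture of the cited proof, but none of the three load-bearing inputs is actually established in it.
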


\begin{disc}\label{limitdiscussion} {\rm
Here we use the notation $\hat{\mathcal{X}}_{2g}$  for  the adic space over $C$ attached to the flag variety $X_{2g}$.  In {\it loc. cit.}  the notation $\mathcal{X}^*_{K^p}$  is used 
for the perfectoid Shimura variety  ${}_{K^p}\CS(G,X)^{\rm min}.$    The notation $\sim$ indicates that the right hand side of \eqref{perfec} is not to be viewed as a projective limit in the category of adic spaces, which in general does not make sense.   Rather, what is meant is that, for each $r$, there is a commutative diagram
\ga{perfec-r}{
\vcenter{\vbox{
\xymatrix{ \ar@{^{(}->}[d]_{j}  {}_{K^p}\CS(G,X) \   \ar[r]  \   & \ar@{^{(}->}[d]^{j_r}   {}_{K_r}\CS(G,X)\\
 \overline{{}_{K^p}\CS(G,X)}^{\rm min}  \ \  \ar[r] \    &\overline{{}_{K_r}\CS(G,X)}^{\rm min};
}}
}}
that these are compatible with the natural maps from level $K_{r+1}$ to level $K_r$; and that the objects on the left have the properties indicated in  \cite[Definition~2.4.1]{SW13}.}
\end{disc}

The target of the Hodge-Tate morphism was clarified in  \cite{CS17}.
Let $\hat{\mathcal{X}}\subset \hat{\mathcal{X}}_{2g}$  be the closed embedding of $C$-adic spaces corresponding to the closed embedding  $\hat{X}\subset \hat{X}_{2g}$ of the compact duals of $X $  and $X_{2g}$. 

\begin{thm}[Theorem~2.1.3 in \cite{CS17}]\label{thmCS}  The Hodge-Tate morphism $\pi_{HT}$ in Theorem~\ref{perfec2} factors through  the inclusion
$\hat{\mathcal{X}} \subset \hat{\mathcal{X}}_{2g}$, yielding
  a $G(\mathbb{Q}_p)$-
 equivariant Hodge-Tate morphism 
$$\pi_{HT}:  ~~\overline{_{K^p}\CS(G,X)}^{\rm min} \ra \hat{\mathcal{X}}.$$

\end{thm}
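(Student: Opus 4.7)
The plan is to prove the factorization first on the generic/interior locus of the perfectoid space, using the moduli-theoretic interpretation of the Hodge-Tate map together with the tensorial characterization of Hodge-type Shimura data, and then extend to the minimal compactification by a topological continuity/density argument.

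First I would work on the open part. By Theorem~\ref{perfec2} the perfectoid space $_{K^p}\CS(G,X)$ receives a map from the inverse limit of the open Shimura varieties $_{K_r}\CS(G,X)$, and at each finite level the points of $_{K_r}S(G,X)(\overline{\QQ}_p)$ parametrize polarized abelian varieties with level structure equipped with a family of $G$-invariant tensors $(s_\alpha)$ on the rational Tate module, in the sense of the Hodge-type construction embedding $(G,X) \hookrightarrow (\GSp(2g), X_{2g})$. On a classical point $x$, Scholze's Hodge-Tate map sends $x$ to the Hodge filtration on $H^1_{\text{\'et}}(A_x, \QQ_p) \otimes_{\QQ_p} C$ coming from the Hodge-Tate decomposition; since the tensors $s_\alpha$ are horizontal under the comparison isomorphism, the resulting filtration is $G$-stable, i.e.\ the corresponding flag lies in the $G$-orbit $\hat{\mathcal{X}} \subset \hat{\mathcal{X}}_{2g}$. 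This handles the image of the classical points of the open part.

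Next I would promote this from classical points to the entire open perfectoid Shimura variety $_{K^p}\CS(G,X)$. For this one uses Scholze's relative Hodge-Tate sequence: on $_{K^p}\CS(G,X)$ the pro-étale local system $T_p A$ has a canonical $\mathcal{O}_{\hat{}}^+$-linear filtration whose graded pieces recover the Hodge filtration on $H^1_{dR}(A)$. The tensors $s_\alpha$ extend to global sections of the associated tensor bundle, so the filtration they cut out is $G$-stable over the entire open perfectoid space. This exhibits the restriction of $\pi_{HT}$ to the open locus as a map into $\hat{\mathcal{X}}$ at the level of $C$-valued points, and in fact as a map of adic spaces, since the locus of flags preserving $(s_\alpha)$ is a closed adic subspace of $\hat{\mathcal{X}}_{2g}$ whose pullback contains every rank-$1$ point.

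Finally I would extend the factorization across the boundary of the minimal compactification. The open perfectoid $_{K^p}\CS(G,X)$ is dense in $\overline{_{K^p}\CS(G,X)}^{\rm min}$ (this follows from the analogous density at each finite level and the ``$\sim$" relation of Scholze-Weinstein recalled in Discussion~\ref{limitdiscussion}), while $\hat{\mathcal{X}} \hookrightarrow \hat{\mathcal{X}}_{2g}$ is a closed immersion of adic spaces. Since the preimage of a closed subspace under a continuous map of adic spaces is closed, and this preimage contains the dense open $_{K^p}\CS(G,X)$, it must equal all of $\overline{_{K^p}\CS(G,X)}^{\rm min}$. The resulting map is automatically $G(\QQ_p)$-equivariant because Scholze's $\pi_{HT}$ is and the inclusion $\hat{\mathcal{X}} \hookrightarrow \hat{\mathcal{X}}_{2g}$ is $G(\QQ_p)$-equivariant.

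The main obstacle will be the second step: ensuring that the tensors $s_\alpha$ really do spread out to global sections on the perfectoid cover with the correct integrality for the Hodge-Tate filtration, and checking that on the boundary strata --- where abelian varieties degenerate --- the map into $\hat{\mathcal{X}}_{2g}$ already has image in $\hat{\mathcal{X}}$ topologically (as opposed to scheme-theoretically). The density step circumvents the need to understand the boundary fibers directly, which is why formulating the extension purely as a closedness statement in adic spaces is essential.
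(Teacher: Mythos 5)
Your proposal is correct and follows essentially the same route as the paper: the key step is exactly the paper's argument that $\hat{\mathcal{X}}$ is closed in $\hat{\mathcal{X}}_{2g}$ and the factorization extends from the (dense) open perfectoid Shimura variety to $\overline{{}_{K^p}\CS(G,X)}^{\rm min}$ by continuity, as in \cite[Theorem 3.3.4]{C+6}. Your first two steps merely re-derive the statement on the open locus, which the paper does not prove but simply quotes from \cite[Theorem~2.1.3]{CS17}; the only point to be careful about (and which you correctly flag) is upgrading the topological containment to an honest factorization of adic spaces, which works because a section of the structure sheaf of a perfectoid space vanishing on a dense subset vanishes identically.
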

\begin{proof}  The existence of $\pi_{HT}$  is stated for the {\it open} perfectoid Shimura variety $_{K^p}\CS(G,X)$ with values in $\hat{\mathcal{X}}$.  Since $\hat{\mathcal{X}}$ is closed in $\hat{\mathcal{X}}_{2g}$, the extension to the boundary then follows from Theorem~\ref{perfec2}
by continuity,  as in \cite[Theorem 3.3.4]{C+6}.
\end{proof}

Assume for the moment that $(G,X) = (\GSp(2g),X_{2g})$.   Fix $K_g \subset \GSp(2g,\af)$ a neat compact open subgroup, with $K_p = \GSp(2g,\Zp)$ and write $_{K_g}\CA_g$ instead of ${}_{K_g}S(\GSp(2g),X_{2g})$ for the Siegel modular variety of genus $g$ and level $K_g$, viewed as an adic space over $C$.   Let $K_{g,r} \subset K_g$ be the principal congruence subgroup of $K_g$ of level $p^r$.
Let $_{K_g}\CA_g^{\rm tor} = {}_{K_g}\CA_{g,\Sigma_g}$ be a smooth projective toroidal compactification of ${}_{K_g}\CA_g$ for some combinatorial datum $\Sigma_g$ as above.  Following Pilloni-Stroh in  \cite{PS16}, \S 1.3, but with a change of notation, we let ${}_{K_{g,r}}\CA_g^{\rm tor}$   denote the corresponding toroidal compactification of ${}_{K_{g,r}}\CA_g$ for each $r$, with the same $\Sigma_g$.  The ${}_{K_{g,r}}\CA_g^{\rm tor}$ form a projective system of adic spaces over $C$.  The authors construct a projective system of normal models ${}_{K_{g,r}}\mathcal{A}_{g, \mathcal{O}_C}^{\rm tor}$ over ${\rm Spec}(\mathcal{O}_C)$ for each $r$, and define a perfectoid space $_{K^p_g}\mathcal{A}_g^{\rm tor}$ which is the generic fiber of the projective limit
of the  ${}_{K_{g,r}}\mathcal{A}_{g, \mathcal{O}_C}^{\rm tor}$, in the  sense of
\cite[Section~2.2]{SW13}.  See \cite[Section~A.12 and Corollaire~A.19]{PS16} for the statement that the projective limit, which they denote $\mathcal{X}(p^{\infty})^{\rm tor-mod}$,  is indeed perfectoid.    There is a natural map
$$\mathfrak{q}_g:  _{K^p_g}\mathcal{A}_g^{\rm tor} \ra _{K^p_g}\mathcal{A}_g^{\rm min}.$$ 

Let $(G,X)$ be any Shimura datum of Hodge type, with $\iota:  (G,X) \hookrightarrow (\GSp(2g),X_{2g})$ a fixed symplectic embedding.  In the remainder of this section we make the following hypothesis, which (as we have seen) is true for the Shimura datum $(\GSp(2g),X_{2g})$:
\begin{hyp}\label{projtor} The projective system of ${}_{K_r}S(G,X)$ has a compatible projective system of toroidal compactifications
${}_{K_r}S(G,X)_\Sigma$ such that, if ${}_{K_r}\CS(G,X)_\Sigma$ is the associated adic space over $C$, then there is a perfectoid space ${}_{K^p}\CS(G,X)_\Sigma$ such that
$${}_{K^p}\CS(G,X)_\Sigma ~\sim ~ \varprojlim {}_{K_r}\CS(G,X)_\Sigma$$
in the sense of \cite[Section~2.2]{SW13}.  
\end{hyp}

It is likely the  Hypothesis~\ref{projtor} is valid, but its proof is probably as elaborate as the construction of integral toroidal compactifications in \cite{Lan13} \footnote{Ana Caraiani has informed us that a proof of Hypothesis \ref{projtor}, for the case of a Shimura variety attached to a quasi-split unitary similitude group of even dimension, will appear in her forthcoming joint work with Scholze.}. Thus we will provide an alternative proof  of our main theorems, that does not depend on this hypothesis, in section \S \ref{sec:covers}.  However, the proof assuming Hypothesis \ref{projtor} is considerably simpler, and it is valid, by \cite{PS16}, in the Siegel modular case.  

Under Hypothesis \ref{projtor}, there is automatically a map 
$$q:  {}_{K^p}\CS(G,X)_\Sigma \ra {}_{K^p}\CS(G,X)^{\rm min}$$
as well as the maps
 $$q_r:  {}_{K_r}\CS(G,X)_\Sigma \ra {}_{K_r}\CS(G,X)^{\rm min}$$
 at finite level (the existence of which does not depend on the hypothesis).

By  \cite[Section~3.4]{Har85},  there is a correspondence 
\ga{bb}{
\vcenter{\vbox{
\xymatrix{ {}_KI(G,X) \ar[d]_{b'} \ar[r]_{a'} & \hat{X} \\
 {}_KS(G,X)
} }
}}
where $b'$ is 
a family of $G$-torsors,  functorial with respect to inclusions $K' \subset K$ and translation by elements $g \in G(\ad_f)$, and $a'$ is a $G$-equivariant morphism.  
For any $G$-equivariant vector bundle $\mathcal{E}$ over $\hat{X}$, the pullback $a^{\prime*}(\mathcal{E})$ is $G$-equivariant, and thus descends to the automorphic vector bundle $[\mathcal{E}]$ over ${}_KS(G,X)$.
The $G-\rm{torsor}$ ${}_KI(G,X)$ is constructed as the moduli space $\Isom^{\otimes}([V_{\rho}], V)$ of trivializations of a flat automorphic vector bundle $[V_{\rho}]$ attached to a faithful representation $\rho:  G \ra \GL(V)$; the superscript $^{\otimes}$ indicates that the isomorphisms respect absolute Hodge cycles.  
Thus the construction requires {\it a priori} the existence of the automorphic vector bundle $[V_{\rho}]$.  
When $ {}_KS(G,X)$ is of Hodge type, one can take $\rho$ to be a symplectic embedding $\rho:  G \ra \GSp(V)$, and define $[V_{\rho}]$ to be the pullback to $ {}_KS(G,X)$ of the (dual of the) relative de Rham $H^1$ of the universal abelian scheme over the Siegel modular variety attached to $\GSp(V)$.   
Then the morphism $a$ is defined by transferring the Hodge filtration on $[V_{\rho}]$ to the constant vector bundle defined by $V$.   That this construction is canonically independent of the choice of symplectic embedding $\rho$ is explained in \cite[Remark 4.9.1]{Har85}; see also \cite[Lemma 2.3.4]{CS17}.

Choose a base point $h \in X \subset \hX$ as  in Section \ref{sec:autobdl}; we may as well assume $h$ to be a $CM$ point.  
Recall that  $P_h$ denotes the stabilizer of $h$ and  $K_h$  its Levi quotient (in \cite{CS17} this group is denoted $M_\mu$).  
Any faithful representation $\alpha: K_h \ra \GL(W_h)$ defines by pullback to $P_h$ a $G$-equivariant vector bundle $\mathcal{W}_h$ over $\hat{X}$, and thus an automorphic vector bundle $[\mathcal{W}_h]$ over ${}_KS(G,X)$ that varies functorially in $K$.    
We can then define a family (depending on $K$) of $K_h$-torsors $b:T =  {}_{K}T_h(G,X) \ra {}_KS(G,X)$ with a $K_h$-equivariant morphism $a: {}_{K}T_h(G,X) \ra \hat{X}$ as the moduli space of trivializations of $[\mathcal{W}_h]$ as above.  
More precisely, letting $R_uP_h$ denote the unipotent radical of $P_h$, the natural morphism $\hat{X}_h := G/R_uP_h \ra \hat{X}$ is canonically a $G$-equivariant $K_h$-torsor, whose pullback $a^{\prime -1}\hat{X}_h$ descends to a $K_h$-torsor over ${}_KS(G,X)$, over the reflex field of the CM point $h$, that is naturally identified with $T$ defined above.  
Moreover, the construction is canonically independent of the choice of base point.   
This is also constructed in Section~2.3, especially Lemma~2.3.5, of \cite{CS17}.

 The pullback of $T$ via the ringed space morphism $\hat{\mathcal{X}} \to \hat{X}$  is denoted by $\mathcal{T}$.   
  The $K_h$-torsor $b$ has a Mumford extension $ T^{\rm can}\to {}_KS(G,X)_\Sigma$   as a $G$-torsor.  See \cite{HZ94}, specifically Lemma~4.4.2 and pages 320-321, where the Mumford extension of the $G$-torsor $b'$  in \eqref{bb}  is constructed;  the $K_h$-torsor  is constructed in the same way. 
 One denotes by 
  $\mathcal{M}^{\rm can} = {}_K\mathcal{M}^{\rm can}$ the pullback of $T^{\rm can}$ 
   via the ringed space morphism ${}_K\CS(G,X)_\Sigma \to {}_K S(G,X)_\Sigma.$

We define the $K_h$-torsor 
 \ga{}{\mathcal{M}_p := \pi_{HT}^*\mathcal{T}  \notag} 
 on $_{K^p}\CS(G,X)^{\rm min}$, 
 which coincides by definition with the $K_h$-torsor defined in \cite[Lemma~2.3.8]{CS17}.
We define the $K_h$-torsor 
 \ga{can}{ \mathcal{M}_{dR, \Sigma,K}:= \pi_{K, K^p}^{*}\left( {}_K\mathcal{M}^{\rm can}\right)  }
   on ${}_{K^p}\sS(G,X)_{\Sigma}$.

\begin{prop}[Proposition 2.3.9 in \cite{CS17}]\label{compar}  
For any neat level subgroup $K$, there is a canonical isomorphism
\begin{equation*} j^* \CM_{p}  \isoarrow  j_{\Sigma}^* \CM_{dR,\Ss,K}  \end{equation*}
of $K_h$-torsors over $_{K^p}\CS(G,X)$. 
\end{prop}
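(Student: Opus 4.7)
The plan is to reduce the equality of the two $K_h$-torsors to the standard compatibility, over the open Shimura variety, between the Hodge-Tate period map and the de Rham Hodge filtration on automorphic bundles. First, I would unwind the right-hand side. Since $\pi_{K,K^p}$ is compatible with $j$ and $j_\Sigma$, and since the Mumford extension ${}_K\CM^{\rm can}$ restricts on ${}_KS(G,X)$ to the $K_h$-torsor ${}_KT_h(G,X)$ of trivializations of the automorphic bundle $[\CW_h]$ (by the construction recalled before \eqref{can}), the pullback $j_\Sigma^*\CM_{dR,\Sigma,K}$ is identified with the pullback of ${}_KT_h(G,X)$ from ${}_K\CS(G,X)$ to the open perfectoid Shimura variety ${}_{K^p}\CS(G,X)$.

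Next I would unwind the left-hand side. By the definition $\CM_p = \pi_{HT}^*\CT$ and Theorem \ref{thmCS}, the torsor $j^*\CM_p$ is the $K_h$-torsor classifying those trivializations of the $G$-bundle pulled back from $\hat{\mathcal{X}} = G/P_h$ (modulo $R_uP_h$) that are recorded by $\pi_{HT}$. On the open locus, Scholze's construction of $\pi_{HT}$ in \cite{Sch15} sends a point $x\in {}_{K^p}\CS(G,X)$ to the Hodge-Tate filtration on the stalk at $x$ of the $p$-adic Tate module of the universal abelian scheme, pulled back via the symplectic embedding $\iota$. Thus $j^*\CM_p$ is the $K_h$-torsor of trivializations of the \emph{Hodge-Tate} flag on $\iota^*[V_\rho]$, while $j_\Sigma^*\CM_{dR,\Sigma,K}$ is the $K_h$-torsor of trivializations of its \emph{de Rham} flag.

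The core step, and the main obstacle, is to produce a canonical identification of these two flags on the open perfectoid Shimura variety. This is precisely the content of the relative $p$-adic Hodge-Tate comparison theorem applied to the universal abelian scheme over ${}_{K^p}\CS(G,X)$, which Scholze established in the course of constructing $\pi_{HT}$. The comparison isomorphism is canonical and $G(\QQ_p)$-equivariant, so it descends through $K_h \subset G$ and yields a canonical isomorphism of the two $K_h$-torsors over ${}_{K^p}\CS(G,X)$.

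Finally, I would check that the isomorphism so obtained is independent of the auxiliary data used to set up both sides, namely the symplectic embedding $\iota:(G,X)\hookrightarrow(\GSp(2g),X_{2g})$ and the choice of base point $h\in X$. Independence from $\iota$ follows from \cite[Remark~4.9.1]{Har85} together with \cite[Lemma~2.3.4]{CS17}, where it is shown that the Hodge filtration (and hence the torsor of its trivializations) is intrinsic to the Shimura datum $(G,X)$ because the comparison is forced by absolute Hodge cycles; independence from $h$ is built into the adelic definitions of both $\CT$ and ${}_K\CM^{\rm can}$. Functoriality in $K^p$ is then automatic from the $G(\QQ_p)$-equivariance of all constructions involved.
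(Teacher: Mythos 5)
Your overall route is the one the paper itself relies on: the paper does not reprove this statement but simply cites Proposition 2.3.9 of \cite{CS17}, adding only that the argument there, written for compact Shimura varieties, goes through for any Shimura variety of Hodge type, and warning that the comparison is one of $K_h$-torsors (equivalently, it applies only to semisimple automorphic bundles) because the two torsors extend to $G$-torsors via \emph{opposite} parabolics. Your unwinding of the two sides --- restriction of the Mumford extension to the open variety on the de Rham side, and $\pi_{HT}^*$ of the tautological $K_h$-torsor $G/R_uP_h \to \hat{X}$, i.e.\ the Hodge--Tate flag read through the infinite-level trivialization of the Tate module, on the other side --- is consistent with that argument.

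However, your pivotal step is misstated in a way that matters. There is no ``canonical identification of these two flags'' over ${}_{K^p}\CS(G,X)$: the Hodge--Tate filtration lives on $V_p\otimes\hat{\mathcal{O}}$ while the Hodge filtration lives on the de Rham bundle, and relative $p$-adic Hodge theory does not identify them over $\hat{\mathcal{O}}$; they are filtrations for \emph{opposite} parabolics, and a filtration-compatible identification would amount to a splitting (a genuine comparison of filtered objects requires period sheaves such as $\mathcal{O}\mathbb{B}^+_{dR}$, where the two filtrations are transverse, not equal). What the relative Hodge--Tate theory for the universal abelian scheme does give is an identification of the \emph{associated graded} pieces, up to Tate twists which are trivialized at infinite level by the tautological trivialization of the Tate module; since a $K_h$-torsor ($K_h$ being the Levi quotient of $P_h$) records only this graded data, that identification suffices to compare $j^*\CM_p$ with $j_\Sigma^*\CM_{dR,\Sigma,K}$, and this is how \cite{CS17} argues. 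As literally written, your step would yield an isomorphism of $P_h$-torsors, hence a comparison for \emph{all} automorphic bundles, which is exactly what the paper's caveat immediately after the proposition excludes. So replace ``identification of the flags'' by ``identification of the graded pieces, after trivializing the Tate twist at infinite level''; with that correction (and a precise statement in place of ``descends through $K_h\subset G$''), your sketch matches the cited argument.
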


Although the article \cite{CS17} is written for compact Shimura varieties, the argument  developed there for this point  is valid for any Shimura variety of Hodge type.   Strictly speaking, as explained in \cite{CS17}, the torsors $\CM_{p}$ and $\CM_{dR,\Ss, K}$ have natural extensions to $G$-torsors by pullback to torsors for opposite parabolics, followed by pushforward to $G$, so the comparison only applies to semisimple automorphic vector bundles.

The following theorem is essentially due to Pilloni and Stroh.  
\begin{thm}\label{compar2}  Assume Hypothesis \ref{projtor}.  Then, for any neat level subgroup $K$, the isomorphism of Proposition \ref{compar} extends to a canonical isomorphism 
\ga{}{  q^*\mathcal{M}_p \isoarrow \mathcal{M}_{dR, \Sigma,K} \notag}
of $K_h$-torsors 
over $_{K^p}\CS(G,X)_{\Sigma}$.  As $K^p$ varies,  
these isomorphisms are equivariant under the action of $G(\mathbf{A}_f^p)$. 
\end{thm}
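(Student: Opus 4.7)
The plan is to reduce to the Siegel case, where the result is essentially due to Pilloni-Stroh, and then transfer the isomorphism along the fixed symplectic embedding $\iota: (G,X) \hookrightarrow (\GSp(2g), X_{2g})$. I would start with the Siegel case $(G,X) = (\GSp(2g), X_{2g})$. At finite level, Mumford's canonical extension to ${}_{K_{g,r}}\mathcal{A}_g^{\rm tor}$ of the tautological Hodge bundle is, by its construction in \cite{Mum77}, the Hodge filtration on the relative log de Rham $H^1$ of the universal semi-abelian degeneration, so ${}_{K_g}\CM^{\rm can}$ admits an algebraic description in terms of this filtration. On the perfectoid side, the construction of ${}_{K^p_g}\mathcal{A}_g^{\rm tor}$ in \cite{PS16} is designed precisely so that $\pi_{HT}$ extends to the boundary with values in $\hat{\mathcal{X}}_{2g}$, and so that the pullback of the tautological Lagrangian subbundle from $\hat{\mathcal{X}}_{2g}$ agrees, in the $\sim$-limit (in the sense of \cite{SW13}), with the pullback of the Hodge filtration along the tower. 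This yields an isomorphism $\mathfrak{q}_g^*\CM_{p,g} \isoarrow \CM_{dR, \Sigma_g, K_g}$ extending the one of Proposition~\ref{compar} for $(\GSp(2g), X_{2g})$.

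For a general Shimura datum of Hodge type $(G,X)$, I would next refine $\Sigma$ and $\Sigma_g$ so that $\iota$ extends to a morphism of toroidal compactifications (possible by the functoriality of toroidal compactifications with respect to morphisms of Shimura data; see \cite{Pin90, Har89}), and appeal to Hypothesis~\ref{projtor} to obtain a map of perfectoids ${}_{K^p}\CS(G,X)_\Sigma \to {}_{K^p_g}\mathcal{A}_g^{\rm tor}$ for a compatible Siegel tame level. By Theorem~\ref{thmCS} the Hodge-Tate map for $(G,X)$ factors through $\hat{\mathcal{X}} \subset \hat{\mathcal{X}}_{2g}$, so $\CM_p$ is canonically the reduction of structure to $K_h$ of the pullback of the Siegel torsor $\CM_{p,g}$; likewise ${}_K\CM^{\rm can}$ is the reduction to $K_h$ of the pullback of the Siegel Mumford extension, as in \cite[Lemma~4.4.2]{HZ94}. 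Pulling back the Siegel isomorphism and reducing the structure group along these two compatible reductions produces the desired isomorphism $q^*\CM_p \isoarrow \CM_{dR, \Sigma, K}$ over ${}_{K^p}\CS(G,X)_\Sigma$. The $G(\afp)$-equivariance is then inherited from that of each ingredient: $\pi_{HT}$ (Theorem~\ref{perfec2}), Mumford's canonical extension (by its uniqueness), and the extension of $\iota$ to toroidal compactifications.

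The main obstacle lies in the first step: the precise identification between the perfectoid boundary extension of $\pi_{HT}$ constructed by Pilloni-Stroh and Mumford's algebraic canonical extension of the Hodge bundle, since the two arise from quite different constructions. Matching them requires an explicit local analysis at the cusps, using the description of ${}_{K^p_g}\mathcal{A}_g^{\rm tor}$ in the appendix of \cite{PS16} together with the degeneration theory of polarized abelian schemes underlying Mumford's construction.
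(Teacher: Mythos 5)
Your Siegel step is essentially the paper's: the identification of the Pilloni--Stroh comparison with Mumford's canonical extension of the Hodge bundle is exactly what the paper quotes (Corollaire~1.16 of \cite{PS16}, together with Lan's identification of the canonical extension with the invariant $1$-forms of the semi-abelian scheme), so the "main obstacle" you flag is in fact the part that is already available in the literature and does not need a new local analysis at the cusps.

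The genuine gap is in your second step. Knowing that the Siegel isomorphism $\mathfrak{q}_g^*\CM_{p,g}\isoarrow\CM_{dR,\Sigma_g,K_g}$ pulls back to ${}_{K^p}\CS(G,X)_\Sigma$, and that both $\CM_p$ and ${}_K\CM^{\rm can}$ are $K_h$-reductions of the pulled-back $K_{h,g}$-torsors, does \emph{not} by itself "produce the desired isomorphism by reducing the structure group": an isomorphism of $K_{h,g}$-torsors has no reason to carry one prescribed $K_h$-reduction to the other, since the reductions are extra data not determined by the ambient torsor. Proposition~\ref{compar} guarantees this compatibility only over the open Shimura variety; over the boundary it is precisely what must be proved, and your proposal treats it as automatic. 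The paper closes exactly this gap by a Tannakian argument: since $\St_h$ is faithful, every irreducible representation $V$ of $K_h$ is a direct summand of some $\St_h^{\otimes m}\otimes\St_h^{\vee\otimes n}$ \cite{DMOS82}; the splitting maps $\sigma,\tau$, combined with the extended comparison $\theta$ for the $\St_h$-bundles, define candidate maps $\alpha:q^*[\mathcal{V}]_p\to[\mathcal{V}]_{dR,\Sigma,K}$ and $\beta$ in the other direction, and one checks $\beta\circ\alpha=\mathrm{Id}$ and $\alpha\circ\beta=\mathrm{Id}$ because these differences are homomorphisms of vector bundles (hence torsion-free sheaves) that vanish after restriction to the open locus by Proposition~\ref{compar}. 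Without an argument of this kind (or some other mechanism forcing the two $K_h$-reductions, equivalently the two sections of the quotient by $K_h$, to agree on the boundary), your construction only yields an isomorphism of the induced $K_{h,g}$-torsors, i.e.\ a comparison for those automorphic bundles coming from representations of $K_{h,g}$, not the asserted isomorphism of $K_h$-torsors; note also that the comparison can only be expected for semisimple bundles, i.e.\ representations factoring through $K_h$, as the paper points out after Proposition~\ref{compar}.
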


In \cite{PS16} this is proved for the Siegel modular variety, although it is not stated in this form.   In
 Section~\ref{proofofcomparison}  we explain their result and show how to obtain  Theorem~\ref{compar2} for general Shimura varieties of Hodge type, under Hypothesis \ref{projtor}.   
In the following section, we show how to dispense with Hypothesis \ref{projtor} and prove Theorem ~\ref{compar3} as an alternative to Theorem ~\ref{compar2}.
 In Section~\ref{sec:chern} we construct $\ell$-adic Chern classes using either Theorem \ref{compar2} (when it is available) or Theorem ~\ref{compar3} for general Shimura varieties of Hodge type.

 \section{Construction of perfectoid covers}\label{sec:covers}
 
Let $(G,X)$ be any Shimura datum of Hodge type, with $\iota:  (G,X) \hookrightarrow (\GSp(2g),X_{2g})$ a fixed symplectic embedding.  Choose a combinatorial datum $\Sigma$ for $_KS(G,X)$ that is compatible with the fixed $\Sigma_g$ chosen above, so that there is a morphism
$$[\iota]:  {}_KS(G,X)_\Sigma \to {}_{K_g}\CA_{g,\Sigma_g}$$
which factors through  
\ga{bar}{ \overline{{}_{K}S(G,X)_{\Sigma}} = {}_{K_g}\CA_{g,\Sigma_g} \times_{ {}_{K_g}\CA^{\rm min}_g}
\overline{{}_K \CS(G,X)}^{\rm min}.}
We denote the corresponding $C$-adic spaces by 
${}_K \CS(G,X)_\Sigma$ and $ \overline{{}_{K} \CS(G,X)_{\Sigma}} $.

If we admitted Hypothesis \ref{projtor} then we would be able to replace $ \overline{{}_{K} \CS(G,X)_{\Sigma}}$ by ${}_K \CS(G,X)_\Sigma$ in what follows; however, we will not assume Hypothesis \ref{projtor} in the remainder of this section.  
By \cite[Prop.~6.18]{Sch12} the fibre product
\ga{bar2}{ \overline{ {}_{K^p} \CS(G,X)_\Sigma}:= {}_{K^p_g}\mathcal{A}_g^{\rm tor} \times_{ 
{}_{K_g^p} \mathcal{A}_g^{\rm min}} \overline{{}_{K^p}\CS(G,X)}^{\rm min}
}
exists in the category of perfectoid spaces.  We denote by 
\ga{}{ \bar{q}:  \overline{ {}_{K^p} \CS(G,X)_\Sigma} \to \overline{{}_{K^p}\CS(G,X)}^{\rm min} }
 the projection on the second factor, and by
 $$\bar{q}_r:  \overline{ {}_{K^p} \CS(G,X)_\Sigma} \to \overline{{}_{K_r}\CS(G,X)}^{\rm min}$$
 the composition with the canonical map of $\overline{{}_{K^p}\CS(G,X)}^{\rm min}$ to $\overline{{}_{K_r}\CS(G,X)}^{\rm min}$.
By definition,  in the sense of \cite[Definition~2.4.1]{SW13}, one has 
$$
{}_{K^p_g}\mathcal{A}_g^{\rm tor} \sim  \varprojlim_r {}_{K_{g,r}}\CA_g^{\rm tor}, \ \ 
{}_{K^p_g}\mathcal{A}_g^{\rm min} \sim  \varprojlim_r {}_{K_{g,r}}\CA_g^{\rm min}, \ \
\overline{{}_{K^p}\CS(G,X)}^{\rm min} \sim  \varprojlim_r \overline{{}_{K_r}\CS(G,X)}^{\rm min}.
$$

One deduces
\ga{lim}{     \overline{ {}_{K^p} \CS(G,X)_\Sigma} \sim \varprojlim_r 
  \overline{ {}_{K_r} \CS(G,X)_\Sigma}. }
In particular, by \cite[Corollary~7.1.8]{Sch12}, one has the relation
    \ga{coh}{  H^*\left(  \overline{ {}_{K^p} \CS(G,X)_\Sigma}, \mathbb{Z}/\ell^n\right)= \varinjlim_r 
    H^*\left(  \overline{ {}_{K_r} \CS(G,X)_\Sigma}, \mathbb{Z}/\ell^n\right). }
On the other hand,  Scholze's Hodge-Tate map yields the composite map 
\ga{}{  \overline{ {}_{K^p} \CS(G,X)_\Sigma} \to {}_{K_g^p} \mathcal{A}_g^{\rm min} \to \hat{ \mathcal{X}}_{2g} \notag}
which by definition is the same as the composite map 
\ga{}{  \overline{ {}_{K^p} \CS(G,X)_\Sigma} \to \overline{{}_{K^p} \CS(G,X)}^{\rm min}  \xrightarrow{\pi_{HT}} \hat{\mathcal{X}}    \to \hat{ \mathcal{X}}_{2g}. \notag}
This defines a map
\ga{HT}{  \overline{ {}_{K^p} \CS(G,X)_\Sigma} \to \hat{\mathcal{X}} }
in the category of $C$-adic spaces. It is $G(\mathbb{Q}_p)$-equivariant.

In what follows the idea to use the space denoted $\mathbb{P}^{N, {\rm perf}}$
to define a perfectoid space above 
${}_{K_s}\CS(G,X)_{\Sigma}$ 
 is due to Bhargav Bhatt. 
We fix a natural number $s$. Associated to it we have the normalization morphism
$\nu_s:   {}_{K_s}\CS(G,X)_{\Sigma}\to \overline{  {}_{K_s}\CS(G,X)_{\Sigma}}.$ 
It is a finite morphism, thus it factors as 
$$
\nu_s: {}_{K_s}\CS(G,X)_{\Sigma}\xrightarrow{\eta_s}
 \overline{  {}_{K_s}\CS(G,X)_{\Sigma}}\times_{C} \mathbb{P}^N   \xrightarrow{\rm projection} \overline{  {}_{K_s}\CS(G,X)_{\Sigma}}
$$
 for some natural number $N$,  where $\eta_s$ is a closed embedding. We set 
 $$Y_s={\rm Image}(\eta_s).$$
 By definition $\eta_s$ induces an isomorphism between  ${}_{K_s}\CS(G,X)_{\Sigma}$ and $Y_s$.  For any $\rho \geq 0$ we also define
 \begin{equation}\label{plusrho} Y_{s+\rho}  = Y_s \times_{ \overline{  {}_{K_s}\CS(G,X)_{\Sigma}}}  \overline{ {}_{K_{s+\rho}}\CS(G,X)_{\Sigma}} .
 \end{equation}

We fix coordinates $(x_0:\ldots:x_N)$ of $\mathbb{P}^N$ and define  for $\rho \in \mathbb{N}$ the finite flat morphisms
$\phi_\rho: \mathbb{P}^N\to \mathbb{P}^N$ by $\phi_\rho (x_0:\ldots:x_N)= (x_0^{p^\rho}:\ldots: x_N^{p^\rho})$. 
Over $C$,  a 
 perfectoid space $\mathbb{P}^{N, {\rm perf}}$ is associated to  $(\mathbb{P}^N, \phi_\rho)$
 \cite[Section~17]{Sch14},  such that
$$\mathbb{P}^{N, {\rm perf}}\sim \varprojlim_\rho \phi_\rho.$$
 Then the fiber product
$$ {}_{K^p}\CS(G,X)_{\Sigma,N} := \overline{  {}_{K^p}\CS(G,X)_{\Sigma}}\times_C \mathbb{P}^{N, {\rm perf}}
$$
exists as a perfectoid space, by \cite[Proposition 6.18]{Sch12}, and we have
\begin{equation}\label{sgxN}
  {}_{K^p}\CS(G,X)_{\Sigma,N}  \sim \varprojlim_{\rho}  \overline{  {}_{K_{s+\rho}}\CS(G,X)_{\Sigma}} \times_C \mathbb{P}^N,
\end{equation}
where the map 
$$\overline{  {}_{K_{s+\rho}}\CS(G,X)_{\Sigma}} \times_C \mathbb{P}^N \ra \overline{  {}_{K_s}\CS(G,X)_{\Sigma}} \times_C \mathbb{P}^N$$
is given by the natural projection in the first factor and $\phi_\rho$ in the second factor.

\begin{prop}\label{Zs} There is a closed perfectoid subspace $$Z_s \subset  {}_{K^p}\CS(G,X)_{\Sigma,N}$$ such that
\begin{equation*}
Z_s \sim \varprojlim_{\rho \ge 0}  Y_{s+\rho} .
\end{equation*}
\end{prop}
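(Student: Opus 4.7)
The plan is to define $Z_s$ as a closed subspace of the perfectoid space ${}_{K^p}\CS(G,X)_{\Sigma,N}$ cut out by pulling back the equations of $Y_s$. Concretely, consider the morphism of adic spaces
$$
\sigma:\; {}_{K^p}\CS(G,X)_{\Sigma,N} \;=\; \overline{{}_{K^p}\CS(G,X)_\Sigma}\times_C\mathbb{P}^{N,\mathrm{perf}} \;\longrightarrow\; \overline{{}_{K_s}\CS(G,X)_\Sigma}\times_C\mathbb{P}^N
$$
obtained as the product of the canonical projection $\overline{{}_{K^p}\CS(G,X)_\Sigma}\to\overline{{}_{K_s}\CS(G,X)_\Sigma}$ given by \eqref{lim} in the first factor, and the level-$0$ map $\pi_0:\mathbb{P}^{N,\mathrm{perf}}\to\mathbb{P}^N$ (from the presentation $\mathbb{P}^{N,\mathrm{perf}}\sim\varprojlim_\rho\phi_\rho$) in the second. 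I would then set $Z_s := \sigma^{-1}(Y_s)$. Because $Y_s$ is closed in its ambient product, $Z_s$ is closed in the perfectoid space ${}_{K^p}\CS(G,X)_{\Sigma,N}$, and by Scholze's theorem that closed subspaces of perfectoid spaces are perfectoid (\cite[Section 6.4]{Sch12}), $Z_s$ inherits a perfectoid structure.

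Next, I would construct the compatible family of morphisms $Z_s \to Y_{s+\rho}$ needed to express $Z_s$ as a $\sim$-limit. For each $\rho\ge 0$, the projection $\overline{{}_{K^p}\CS(G,X)_\Sigma}\to\overline{{}_{K_{s+\rho}}\CS(G,X)_\Sigma}$ from \eqref{lim}, combined with $\pi_0$, gives a morphism $\tau_\rho:{}_{K^p}\CS(G,X)_{\Sigma,N}\to \overline{{}_{K_{s+\rho}}\CS(G,X)_\Sigma}\times_C\mathbb{P}^N$. Since by \eqref{plusrho} one has $Y_{s+\rho}=p_\rho^{-1}(Y_s)$, and $\sigma$ factors through $\tau_\rho$ composed with the further projection to $\overline{{}_{K_s}\CS(G,X)_\Sigma}\times_C\mathbb{P}^N$, the condition $(x,y)\in Z_s$ forces $\tau_\rho(x,y)\in Y_{s+\rho}$. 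This produces a morphism $Z_s\to Y_{s+\rho}$, and these are manifestly compatible with the transition maps $Y_{s+\rho+1}\to Y_{s+\rho}$ induced by the level projection in the first factor.

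To conclude $Z_s\sim\varprojlim_\rho Y_{s+\rho}$ in the sense of \cite[Section 2.2]{SW13}, I would verify the two conditions: (i) the map $|Z_s|\to\varprojlim_\rho|Y_{s+\rho}|$ is a homeomorphism, and (ii) there is a compatible cover of $Z_s$ by affinoid open subsets, each pulled back from an affinoid open of some $Y_{s+\rho}$, with dense colimit of algebras. For (ii), I would take the restriction to $Z_s$ of the product affinoid covers of ${}_{K^p}\CS(G,X)_{\Sigma,N}$ arising from the presentation \eqref{sgxN} and the analogous covers of $Y_{s+\rho}$ coming from covers of $\overline{{}_{K_{s+\rho}}\CS(G,X)_\Sigma}\times_C\mathbb{P}^N$; the density condition on functions would then follow from the corresponding density statements for the $\sim$-presentations of the two ambient factors.

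The main obstacle will be (i), since a priori the target $\varprojlim_\rho|Y_{s+\rho}|$ has its $\mathbb{P}^N$-coordinate \emph{fixed} under the transition maps, whereas $|Z_s|$ sits inside $|\overline{{}_{K^p}\CS(G,X)_\Sigma}|\times|\mathbb{P}^{N,\mathrm{perf}}|$ where the projective coordinate has been perfectoidized by adjoining compatible systems of $p^\rho$-th roots. The crucial point is to check that, after imposing the defining condition of $Z_s$ coming from $Y_s$ and combining with the tower structure on the first factor, the $p^\rho$-th root data is determined uniquely by the level-$0$ coordinate, so that $\pi_0$ induces a bijection on points of $Z_s$ onto its image in $|\overline{{}_{K^p}\CS(G,X)_\Sigma}|\times|\mathbb{P}^N|$. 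I expect this to be the technical heart of the argument, and to require a careful comparison of continuous valuations on the perfectoidized ring with those on the polynomial ring, together with the closedness of $Y_s$.
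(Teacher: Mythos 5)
Your construction of $Z_s$ is exactly the paper's: one pulls back the ideal of $Y_s$ along the map \eqref{sgxN} from ${}_{K^p}\CS(G,X)_{\Sigma,N}$ to the bottom term $\overline{{}_{K_s}\CS(G,X)_{\Sigma}}\times_C\mathbb{P}^N$ of the tower, and invokes the fact that a Zariski closed subset of a perfectoid space carries a canonical perfectoid structure (the correct reference is \cite[Proposition~9.4.1]{Bha17}, as in the paper; \cite{Sch12} does not treat Zariski closed subspaces). The paper's proof consists of this construction alone and treats the limit description as part of it; your attempt to verify the conditions of \cite[Definition~2.4.1]{SW13} goes beyond that, and it is there that the problem lies.

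The genuine gap is your step (i), and the difficulty you flag is not a technicality that a finer study of valuations will remove: as you have set it up, the statement is false. The pulled-back ideal involves only the level-zero coordinates $x_0,\dots,x_N$, while a point of ${}_{K^p}\CS(G,X)_{\Sigma,N}$ also records compatible $p$-power roots of them (this is exactly the datum added by $\mathbb{P}^{N,{\rm perf}}$), and the vanishing of that ideal imposes no condition on the roots. Hence, at a rank-one point whose relevant coordinates are nonzero, the fibre of $|Z_s|\to\varprojlim_\rho|Y_{s+\rho}|$ (with $Y_{s+\rho}$ as in \eqref{plusrho} and transition maps induced by change of level alone) is a torsor under $\varprojlim_\rho\mu_{p^\rho}(C)^N\cong\mathbb{Z}_p(1)^N$: the root data is genuinely not determined by the level-zero coordinate, so the uniqueness you hope to prove fails and the map is far from injective. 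The comparison that is actually formal is with the inverse system whose level-$\rho$ term is the preimage of $Y_s$ under the transition map of \eqref{sgxN} itself, i.e.\ under $({\rm pr},\phi_\rho):\overline{{}_{K_{s+\rho}}\CS(G,X)_{\Sigma}}\times_C\mathbb{P}^N\to\overline{{}_{K_s}\CS(G,X)_{\Sigma}}\times_C\mathbb{P}^N$, so that the projective coordinates are also raised to $p^\rho$-th powers; this space is a finite cover of $p$-power degree of the base change \eqref{plusrho}, not equal to it. For that system both conditions of \cite[Definition~2.4.1]{SW13} are inherited from \eqref{sgxN}: the vanishing locus of the pulled-back ideal is the inverse limit of the finite-level preimages, and the density of functions is obtained by restricting the affinoid covers of the ambient tower. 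Since these transition maps are still finite surjective of $p$-power degree, the consequence used later, namely $H^{\bullet}(Z_s,\mathbb{F}_\ell)=\varinjlim_\rho H^{\bullet}(\cdot,\mathbb{F}_\ell)$ with injective transition maps as in Proposition \ref{CY}, is unaffected. You should therefore either prove the limit statement in this corrected form, or argue the needed cohomological identification directly; the route through uniqueness of the $p^\rho$-th roots cannot be completed.
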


\begin{proof}
We define $Z_s \subset {}_{K^p}\CS(G,X)_{\Sigma,N}$ by the pullback of the ideal of $Y_s$ in $  \overline{  {}_{K_s}\CS(G,X)_{\Sigma}}\times_{C} \mathbb{P}^N  $  via \eqref{sgxN} 
$$
{}_{K^p}\CS(G,X)_{\Sigma,N}  
 \to   \overline{  {}_{K_s}\CS(G,X)_{\Sigma}}\times_{C} \mathbb{P}^N .  $$ 
It follows from \cite[Proposition~9.4.1]{Bha17} that  $Z_s$ is perfectoid. 
\end{proof}

By construction, $Z_s$ maps to 
$\overline{  {}_{K^p}\CS(G,X)_{\Sigma}} $ which itself maps to 
 $ \overline{{}_{K^p}\CS(G,X)}^{\rm min}$, and to ${}_{K_s}\CS(G,X)_{\Sigma}$, but not necessarily to ${}_{K_r}\CS(G,X)_{\Sigma}$ for $r>s$.
  On the other hand, ${}_{K_s}\CS(G,X)_{\Sigma}$ naturally maps to ${}_{K_s}\CS(G,X)^{\rm min}$, and thus we have a composite diagram
 \begin{equation}\label{composite}
 Z_s \overset{\phi}\to {}_{K_s}\CS(G,X)_\Sigma \overset{q_s}\to  {}_{K_s}\CS(G,X)^{\rm min}   \overset{u_s}\to  \overline{{}_{K_s}\CS(G,X)}^{\rm min} 
\end{equation} 
We let $Q$ denote the composite of the three maps, and let $\underline{q}_s = u_s \circ q_s$ denote the composite of the last two maps.
 
\begin{prop}
\label{CY}  The  perfectoid space $Z_s$ has the following properties.
\begin{itemize}
\item[\rm (i)]  There are morphisms  
$$Q:  Z_s \xrightarrow{\psi} \overline{ {}_{K^p} \CS(G,X)_\Sigma} \xrightarrow{\bar q}    \overline{{}_{K^p} \CS(G,X)}^{\rm min}$$
$$\phi: Z_s \ra {}_{K_s}\CS(G,X)_{\Sigma}$$ 
such that the following diagram commutes 
\ga{}{\xymatrix{\ar[dr]^{Q_s}  \ar[d]_\phi  Z_s \ar[r]^\psi & \overline{ {}_{K^p}\CS(G,X)_\Sigma} \ar[d]^{\bar q_s} \\
Y_s={}_{K_s}\CS(G,X)_\Sigma \ar[r]^{\underline{q}_s} & \overline{{}_{K_s}\CS(G,X)}^{\rm min}
} \notag}

\item[\rm (ii)]  The homomorphism on cohomology
$$\phi^*: H^{\bullet}(Y_s,\mathbb{F}_\ell) \to H^{\bullet}(Z_s, \mathbb{F}_\ell),$$
induced from {\rm (i)}, is injective, where $\ell$ is a prime different from $p$. The injectivity holds also true with $\mathbb{Q}_\ell$-coefficients when $s=0$. 
\end{itemize}
\end{prop}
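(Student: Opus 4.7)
The plan is to treat part (i) as a formal unwinding of definitions and to reduce part (ii) to a trace-map argument for the finite covers of $p$-power degree that constitute the tower $\{Y_{s+\rho}\}_{\rho \ge 0}$ of Proposition~\ref{Zs}.

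For (i), I define $\psi$ as the composition of the closed immersion $Z_s \hookrightarrow {}_{K^p}\CS(G,X)_{\Sigma,N} = \overline{{}_{K^p}\CS(G,X)_\Sigma} \times_C \mathbb{P}^{N,\mathrm{perf}}$ with projection onto the first factor, and $\phi$ as the map $Z_s \to Y_s$ arising from the $\sim$-equivalence $Z_s \sim \varprojlim_\rho Y_{s+\rho}$ composed with the inverse of the isomorphism $\eta_s\colon {}_{K_s}\CS(G,X)_\Sigma \isoarrow Y_s$ furnished by the normalization $\nu_s$. Commutativity of the diagram then follows from two observations: $Z_s$ has been constructed so that its first projection lands in $\overline{{}_{K^p}\CS(G,X)_\Sigma}$ compatibly with its projection to $Y_s$, and by construction $\underline{q}_s = u_s \circ q_s$ in \eqref{composite} is precisely the map $Y_s \cong {}_{K_s}\CS(G,X)_\Sigma \to \overline{{}_{K_s}\CS(G,X)}^{\rm min}$ obtained by composing the normalization with the projection to the minimal level.

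For (ii) with $\mathbb{F}_\ell$-coefficients, the first step is to apply the general comparison of \cite[Corollary~7.1.8]{Sch12} (in the spirit of \eqref{coh}) to the compatible system $\{Y_{s+\rho}\}$ provided by Proposition~\ref{Zs}, obtaining
$$H^\bullet(Z_s, \mathbb{F}_\ell) \cong \varinjlim_\rho H^\bullet(Y_{s+\rho}, \mathbb{F}_\ell),$$
so that $\phi^*$ becomes the canonical map from $H^\bullet(Y_s, \mathbb{F}_\ell)$ into this colimit. It therefore suffices to show that each transition map $p_\rho \colon Y_{s+\rho} \to Y_s$ induces an injection in cohomology. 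By definition \eqref{plusrho}, $p_\rho$ is the base change of $\overline{{}_{K_{s+\rho}}\CS(G,X)_\Sigma} \to \overline{{}_{K_s}\CS(G,X)_\Sigma}$, which is finite of degree $[K_s : K_{s+\rho}]$, a power of $p$. Since $\ell \neq p$, this degree is a unit in $\mathbb{F}_\ell$, and the identity $(\deg p_\rho)^{-1} p_{\rho,*} \circ p_\rho^* = \mathrm{id}$ provides a retraction of $p_\rho^*$. Passing to the colimit in $\rho$ yields the desired injectivity of $\phi^*$.

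The main obstacle is the $\mathbb{Q}_\ell$-statement, since $\mathbb{Q}_\ell$-cohomology does not in general commute with $\sim$-equivalences of perfectoid spaces and the colimit identification above is not directly available. I would argue at integral level: for each $\rho$, the trace argument splits $p_\rho^*$ on $\mathbb{Z}/\ell^n$-cohomology after inverting $p$, and injectivity is preserved under the passage $\varprojlim_n$, giving an injection $H^\bullet(Y_0, \mathbb{Z}_\ell) \hookrightarrow H^\bullet(Z_0, \mathbb{Z}_\ell)$ which persists after $-\otimes_{\mathbb{Z}_\ell} \mathbb{Q}_\ell$. The restriction to $s = 0$ is what ensures that the source $Y_0 = {}_K\CS(G,X)_\Sigma$ is a genuine smooth proper rigid analytic variety of finite type, so that its $\mathbb{Z}_\ell$- and $\mathbb{Q}_\ell$-cohomology groups are finitely generated and the tensor product $H^\bullet(Y_0, \mathbb{Z}_\ell) \otimes_{\mathbb{Z}_\ell} \mathbb{Q}_\ell$ agrees with the $\mathbb{Q}_\ell$-cohomology used on the right.
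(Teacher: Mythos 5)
Your treatment of (i) and of the $\mathbb{F}_\ell$-statement for $s\ge 1$ is essentially the paper's argument: identify $H^\bullet(Z_s,\mathbb{F}_\ell)$ with $\varinjlim_\rho H^\bullet(Y_{s+\rho},\mathbb{F}_\ell)$ via Proposition~\ref{Zs} and \cite[Corollary~7.18]{Sch12}, then split $\phi^*$ layer by layer with a trace map for finite covers of degree prime to $\ell$. One caveat already there: the finiteness of the transition maps $Y_{s+\rho}\to Y_{s+\rho-1}$ is not a formality. By \eqref{plusrho} and \eqref{bar} it reduces to finiteness of the level-changing maps on the spaces $\overline{{}_{K_r}\CS(G,X)}^{\rm min}$, and the paper has to establish this by an ampleness argument (the extension of the Hodge line bundle $\omega_g$ to the minimal compactification and its pullback), precisely because of a reported problem with the injectivity claim in Corollary 10.2.3 of \cite{SW17}; your proposal takes this step for granted.

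The genuine gap is in your degree computation and, as a consequence, in your $s=0$ argument. The index $[K_s:K_{s+\rho}]$ is a $p$-power only for $s\ge 1$, where $K_r/K_{r'}=K_{p,r}/K_{p,r'}$ is a $p$-group; for $s=0$ the quotient $K_0/K_1$ is in general not a $p$-group and its order may be divisible by $\ell$ (in the Siegel case it is essentially $\GSp(2g,\mathbb{Z}/p)$). This is exactly why the proposition singles out $s=0$ and passes to $\mathbb{Q}_\ell$-coefficients: with torsion coefficients the first transition map $Y_1\to Y_0$ may have degree divisible by $\ell$, and the trace retraction is lost. Your proposed fix --- ``the trace argument splits $p_\rho^*$ on $\mathbb{Z}/\ell^n$-cohomology after inverting $p$'' --- does not address this: inverting $p$ is irrelevant, since the obstruction is the possible $\ell$-part of $[K_0:K_1]$, which is not invertible in $\mathbb{Z}/\ell^n$ no matter what is done to $p$. (Indeed, if all the degrees were $p$-powers as you assert, nothing would distinguish $s=0$ and the $\mathbb{Q}_\ell$ caveat in the statement would be superfluous.) What one can say is that the kernel of $p_\rho^*$ on $\mathbb{Z}/\ell^n$- or $\mathbb{Z}_\ell$-cohomology is killed by the $\ell$-part of $[K_0:K_1]$, a bound independent of $\rho$ and $n$, so injectivity survives after tensoring with $\mathbb{Q}_\ell$; this bounded-error argument is the one the paper uses in the $r=0$ case of Proposition~\ref{etaleinvariant}, and it is what your $s=0$ argument actually needs. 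Finally, your stated reason for the restriction to $s=0$ (that $Y_0$ is a smooth proper rigid space of finite type) misidentifies the issue: every $Y_s\cong {}_{K_s}\CS(G,X)_\Sigma$ has these properties; $s=0$ is singled out because it is the \emph{problematic} case for $\mathbb{F}_\ell$-coefficients, not because $Y_0$ is better behaved than the other $Y_s$.
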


\begin{proof}  In what follows we assume $G^{ad}$ contains no $\QQ$-simple factor isomorphic to $\PGL(2)$ -- in other words, that $S(G,X)$ contains no factor
isomorphic to the elliptic modular curve -- since in that case the minimal and toroidal compactifications coincide and there is nothing to prove.
The point (i) is just a restatement of the above construction.  
We prove (ii).  By  Proposition~\ref{Zs} together with  \cite[Corollary 7.18]{Sch12}, we have 
$$H^\bullet(Z_s,\mathbb{F}_\ell)= \varinjlim_{\rho \ge 0} H^\bullet(Y_{s+\rho}, \mathbb{F}_\ell).$$
On the other hand, the proper maps 
$Y_{s+\rho}\to Y_{s+\rho-1}$ for $\rho \ge 1$ are finite, of degree a $p$-power for $s\ge 1$. Indeed, the finiteness of this map is reduced by Definition~\eqref{plusrho} to the corresponding assertion for the maps
$$\overline{ {}_{K_{s+\rho}}\CS(G,X)_{\Sigma}} \rightarrow \overline{  {}_{K_s}\CS(G,X)_{\Sigma}}.$$
This in turn is reduced by  \eqref{bar} to the well known fact that  the corresponding morphisms for change of level in the toroidal compactification
${}_{K_g}\CA_{g,\Sigma_g}$ and the compactifications $\overline{{}_K \CS(G,X)}^{\rm min}$ are finite.  
 Indeed, let $\omega_g$ denote the determinant of the relative cotangent bundle of the universal abelian scheme over ${}_{K_g}\CA_g$; in other words, $\omega_g$ is the line bundle whose global sections are Siegel modular forms of weight $1$ and level $K_g$.  Then it is well known (from the theory of the minimal compactification, and  because we have excluded the case of modular curves) that $\omega_g$ extends to an ample line bundle over ${}_{K_g}\CA_g^{\rm min}$, and that the pullback of this extension to ${}_K \CS(G,X)^{\rm min}$ is ample as well.  Thus -- although we have been told there is a problem with the injectivity claim in Corollary 10.2.3 of \cite{SW17} -- the map from ${}_K \CS(G,X)^{\rm min}$ to its image in ${}_{K_g}\CA_g^{\rm min}$ is finite, and
 the morphisms from ${}_K \CS(G,X)^{\rm min}$ to $\overline{{}_K \CS(G,X)}^{\rm min}$ are finite, and  from the commutative square
\ga{}{ \xymatrix{\ar[d]_{\rm finite} {}_K \CS(G,X)_s^{\rm min} \ar[r]^{\rm finite }  & \ar[d] \overline{{}_K \CS(G,X)}_s^{\rm min}\\
{}_K \CS(G,X)_r^{\rm min} \ar[r]^{\rm finite} & \overline{{}_K \CS(G,X)}_r^{\rm min}
} \notag
}
for $s \geq r$ we conclude that the  the level-changing maps 
$\overline{{}_K \CS(G,X)}_s^{\rm min} \to  \overline{{}_K \CS(G,X)}_r^{\rm min}$  are finite.

This implies (ii) for $s\ge 1$ -- and also for $s=0$, after replacing $\mathbb{F}_\ell$ by $\mathbb{Q}_\ell$ in case the order of the group $K_0/K_1$ is divisible by $\ell$. 
\end{proof}

The analogue of Theorem \ref{compar2} is
\begin{thm}\label{compar3}  
For any neat level subgroup $K$ and any $s \geq 0$, the isomorphism of Proposition \ref{compar} extends to a canonical isomorphism  %
\ga{}{ Q^*\mathcal{M}_p \isoarrow \phi^*\mathcal{M}_{dR, \Sigma,K_s} \notag}
of $K_h$-torsors 
over $Z_s$.  As $K^p$ varies,  
these isomorphisms are equivariant under the action of $G(\mathbf{A}_f^p)$. 
\end{thm}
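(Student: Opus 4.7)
My plan is to deduce Theorem \ref{compar3} from the Siegel case of Theorem \ref{compar2} (which holds unconditionally by \cite{PS16}), via pullback along the symplectic embedding and a reduction-of-structure-group argument anchored at the open Shimura variety by Proposition \ref{compar}.

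First, I use the natural map
$$Z_s \xrightarrow{\psi} \overline{{}_{K^p}\CS(G,X)_\Sigma} \to {}_{K_g^p}\mathcal{A}_g^{\rm tor},$$
where the second arrow comes from the fiber product presentation \eqref{bar2}, to pull back the Siegel comparison of Theorem \ref{compar2}. This produces a canonical isomorphism on $Z_s$ between the pullback of the Hodge-Tate torsor and the pullback from finite level of the Mumford-extended de Rham torsor, both viewed as torsors for the Levi subgroup $K_{h,2g}$ of the Siegel stabilizer parabolic. I then observe that both $Q^*\mathcal{M}_p$ and $\phi^*\mathcal{M}_{dR,\Sigma,K_s}$ are canonical $K_h$-reductions of these ambient $K_{h,2g}$-torsors, where $K_h \hookrightarrow K_{h,2g}$ is induced by the symplectic embedding $\iota$. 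For the Hodge-Tate side this follows from Theorem \ref{thmCS}, which asserts that $\pi_{HT}$ factors through $\hat{\mathcal{X}} \hookrightarrow \hat{\mathcal{X}}_{2g}$, together with the construction of \cite[Lemma~2.3.5]{CS17} exhibiting $\mathcal{T}$ as the $K_h$-reduction of the restriction of $\mathcal{T}_{2g}$. For the de Rham side it follows from the functoriality of the Mumford canonical extension under $\iota$ and from the moduli description of $T$ as trivializations of the Hodge filtration on automorphic vector bundles.

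It remains to show that the two $K_h$-reductions of a common pulled-back $K_{h,2g}$-torsor $\mathcal{P}$ on $Z_s$ in fact coincide. By Proposition \ref{compar} applied at level $K_s$ and pulled back along $\phi$ and $\psi$, the two reductions agree on the open subspace $U := \phi^{-1}({}_{K_s}\CS(G,X)) \subset Z_s$. Now a $K_h$-reduction of $\mathcal{P}$ is a section of the fiber bundle $\mathcal{P}/K_h \to Z_s$, whose typical fiber $K_{h,2g}/K_h$ is separated (indeed quasi-projective); hence the locus where two such sections agree is closed in $Z_s$. It therefore suffices to show that $U$ is dense in $Z_s$. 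For this I use Proposition \ref{Zs}: $Z_s \sim \varprojlim_{\rho \geq 0} Y_{s+\rho}$ in the sense of tilde-limits of perfectoid spaces, each $Y_{s+\rho}$ is reduced (as a closed subspace of a normalization), and the open Shimura variety is dense in each $Y_{s+\rho}$; density is preserved under the tilde-limit construction of \cite{Sch12}, so $U$ is dense in $Z_s$. The $G(\mathbf{A}_f^p)$-equivariance is then automatic: it is inherited from the $\GSp(2g,\mathbf{A}_f^p)$-equivariance of the Siegel comparison, since $G(\mathbf{A}_f^p) \subset \GSp(2g,\mathbf{A}_f^p)$ preserves each $K_h$-reduction by construction, and the section determining the identification is unique on the dense subset $U$. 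The main technical subtlety is verifying that the tilde-limit really preserves schematic density of the open locus at perfectoid level, which should follow from \cite[Proposition~6.18 and Corollary~7.18]{Sch12}; the rest is formal manipulation of reductions of structure group and pullbacks.
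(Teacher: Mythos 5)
Your route is genuinely different from the paper's: the paper never first produces a torsor-level Siegel comparison and then descends structure group; instead it takes the Pilloni--Stroh comparison for the \emph{single} standard bundle $[\CE_{\St}]$ (\cite[Corollaire 1.16]{PS16}), pulls it back to $Z_s$ (Corollary \ref{1forms}), and upgrades it to all semisimple automorphic bundles by the Tannakian device that every irreducible representation of $K_h$ is a direct summand of $\St_h^{\otimes m}\otimes\St_h^{\vee\otimes n}$, the defect maps $\beta\alpha-\mathrm{Id}$ and $\alpha\beta-\mathrm{Id}$ being killed by torsion-freeness of vector bundles because they vanish on $Z_s^o$. Your reduction-of-structure-group strategy is attractive, but as written it has a genuine gap at its pivotal step, and its starting point overstates \cite{PS16}: what Pilloni--Stroh prove unconditionally is the comparison of the standard vector bundle, not of $K_{h,g}$-torsors; the Siegel case of Theorem \ref{compar2} is itself obtained in the paper only by the tensor-upgrading argument, so if you use it as a black box your proof is really a reduction of Theorem \ref{compar3} to that statement rather than an independent argument.

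The gap: your assertion that ``the two reductions agree on $U$'' does not follow from Proposition \ref{compar}. Two $K_h$-reductions of the ambient $K_{h,g}$-torsor $\mathcal{P}$ coincide if and only if they determine the same section of $\mathcal{P}/K_h$, and Proposition \ref{compar} only provides an \emph{abstract} isomorphism of $K_h$-torsors over the open locus; abstractly isomorphic reductions need not be equal as reductions. What you actually need is that the Caraiani--Scholze isomorphism of Proposition \ref{compar}, pushed out along $K_h\hookrightarrow K_{h,g}$, coincides with the restriction to $Z_s^o$ of the pulled-back Siegel comparison -- a compatibility between \cite[Proposition 2.3.9]{CS17} and \cite{PS16} that is stated nowhere in the paper and would have to be extracted from both constructions; without it you get neither the agreement of the two sections on $U$ nor, even granting agreement, the assertion of the theorem that the resulting isomorphism \emph{extends} the one of Proposition \ref{compar}. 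Secondary points you gloss over: the de Rham-side reduction requires functoriality of the Mumford/Harris canonical extension of the torsor under $\iota$ with compatible cone decompositions (available in \cite{Har89}, but an input the paper deliberately avoids by working only with the standard bundle, where the compatibility is Lan's concrete statement about invariant $1$-forms of the semi-abelian scheme); forming $\mathcal{P}/K_h$ as an adic space needs the sousperfectoid/sheafiness discussion of Remark \ref{sheafy}; density of $U$ in $Z_s$ is not a general feature of tilde-limits in \cite{Sch12} but uses the finiteness and surjectivity of the transition maps $Y_{s+\rho}\to Y_s$ established in the proof of Proposition \ref{CY}; and passing from set-theoretic agreement on a closed locus to equality of morphisms uses reducedness/uniformity of perfectoid spaces. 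The paper's torsion-freeness argument is engineered precisely to sidestep this entire package of quotient, density and compatibility issues.
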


The proof is given in Section \ref{proofofcomparison}.

\section{Construction of $\ell$-adic Chern classes} \label{sec:chern}

As in the previous sections, all level subgroups $K$ are neat and all toroidal compactifications are smooth and projective.  The constructions in the present section depend in an essential way on on Theorem~\ref{compar3} -- or on Theorem \ref{compar2}, if we assume  Hypothesis~\ref{projtor}.   Proofs are given without assuming Hypothesis ~\ref{projtor};  to pass from the situation with Hypothesis~\ref{projtor},  we just formally set $Q=q$. 

\subsection{Construction using $\pi_{HT}$}  \label{ss:piHT}

As in \cite[Theorem~2.1.3\,(2)]{CS17}, 
 the existence of the $K_h$-torsor $\CM_p$ over $\overline{_{K^p}\CS(G,X)}^{\rm min}$ implies that there is a functor
\ga{fp}{   
\begin{array}{rcl}
{\rm Vect}^{\rm ss}_G(\hat{X})[ ~~ \overset{r}\simeq  {\rm Rep}_{K_h}]  & \to & 
 \text{ \{Vector bundles over }\overline{_{K^p}\CS(G,X)}^{\rm min} \}  \\
\CE & \mapsto & [\CE]_p. \end{array}}
Similarly, the existence of the $K_h$ torsor $\mathcal{M}_{dR,\Sigma,K}$ over ${}_{K}\sS(G,X)_{\Sigma}$ gives rise to a functor
\ga{fdr}{   
\begin{array}{rcl}
{\rm Vect}^{\rm ss}_G(\hat{X}) [  ~~ \overset{r}\simeq  {\rm Rep}_{K_h}]  & \to & \text{ \{Vector bundles over }{}_{K}\sS(G,X)_{\Sigma} \}  \\
\CE & \mapsto & [\mathcal{E}^{\rm can}]. \end{array}}

With the notation of Theorem \ref{compar3}, we define 
\ga{}{  [\CE]_{p,\Ss,K_s} = Q^*  [\CE]_p, \ [\CE]_{dR,\Ss ,K_s}=\phi^*[\mathcal{E}^{\rm can}].}
 Let $Z^o_s = {}_{K^p}\CS(G,X)\times_{{}_{K^p}\CS(G,X)^{\rm min}} Z_s$, $\mathfrak{j}_s:  Z^o_s \hookrightarrow Z_s$ the canonical map.  
The isomorphism of torsors in Proposition \ref{compar} gives rise to a canonical isomorphism of tensor functors over $Z^o_s$
\begin{equation}\label{open}
( \CE \mapsto \mathfrak{j}_s^*([\CE]_{p,\Ss,K_s})) \isoarrow ( \CE \mapsto \mathfrak{j}_s^*( [\CE]_{dR,\Ss ,K_s})).
\end{equation}

  From Theorem~\ref{compar3} one immediately  obtains the following corollary. 

\begin{cor}\label{pullback} There is a canonical  isomorphism of tensor functors  
$$( \CE \mapsto [\CE]_{p,\Ss,K_s}) \isoarrow ( \CE \mapsto  [\CE]_{dR,\Ss ,K_s})$$
over $Z_s$, lifting the isomorphism \eqref{open}  over  $Z_s^o$.
\end{cor}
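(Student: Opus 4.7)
The plan is to deduce the corollary formally from Theorem~\ref{compar3} via the standard associated-bundle construction, which turns an isomorphism of $K_h$-torsors into an isomorphism of tensor functors $\mathrm{Rep}(K_h) \to \text{(vector bundles)}$.

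First, I would recall the associated-bundle formalism: given a $K_h$-torsor $\mathcal{N}$ on a space $Y$ and a representation $\rho:K_h \to \GL(W)$, one forms the vector bundle $\mathcal{N} \times^{K_h} W$, and this assignment is a (monoidal) tensor functor in $W$, functorial in the pair $(Y,\mathcal{N})$ under pullback. By construction, the functors \eqref{fp} and \eqref{fdr} are precisely the associated-bundle functors attached to $\mathcal{M}_p$ on $\overline{_{K^p}\CS(G,X)}^{\mathrm{min}}$ and to $\mathcal{M}^{\mathrm{can}}$ on ${}_K\CS(G,X)_\Sigma$, respectively, under the equivalence $r:\mathrm{Vect}_G^{\mathrm{ss}}(\hat{X}) \simeq \mathrm{Rep}(K_h)$.

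Next, I would apply pullback along the maps $Q: Z_s \to \overline{_{K^p}\CS(G,X)}^{\mathrm{min}}$ and $\phi: Z_s \to {}_{K_s}\CS(G,X)_\Sigma$ of Theorem~\ref{compar3}. Compatibility of the associated-bundle construction with pullback gives, for each $\CE \in \mathrm{Vect}_G^{\mathrm{ss}}(\hat{X})$ corresponding to $W \in \mathrm{Rep}(K_h)$,
\[
[\CE]_{p,\Ss,K_s} \;=\; Q^*\mathcal{M}_p \times^{K_h} W, \qquad [\CE]_{dR,\Ss,K_s} \;=\; \phi^*\mathcal{M}_{dR,\Sigma,K_s} \times^{K_h} W.
\]
Theorem~\ref{compar3} provides a canonical isomorphism $Q^*\mathcal{M}_p \isoarrow \phi^*\mathcal{M}_{dR,\Sigma,K_s}$ of $K_h$-torsors on $Z_s$. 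Applying the functor $(-) \times^{K_h} W$ term by term produces a canonical isomorphism of vector bundles on $Z_s$, natural in $W$ and compatible with tensor products; this yields the desired isomorphism of tensor functors.

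Finally, I would verify that this isomorphism restricts over the open perfectoid locus $Z_s^o$ to the isomorphism \eqref{open}. This is automatic: the isomorphism in Theorem~\ref{compar3} was constructed precisely as the extension of the isomorphism of $K_h$-torsors from Proposition~\ref{compar}, and the associated-bundle functor commutes with the open immersion $\mathfrak{j}_s$, so pulling back the torsor-level extension under $\mathfrak{j}_s$ recovers the isomorphism of functors \eqref{open}. No serious obstacle arises; the only point requiring a brief word is that $\CE \mapsto [\CE]_{p,\Ss,K_s}$ and $\CE \mapsto [\CE]_{dR,\Ss,K_s}$ are genuinely monoidal (hence tensor functors), which is immediate from the fact that $(-) \times^{K_h} -$ commutes with tensor products in the representation variable.
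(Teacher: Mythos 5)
Your proposal is correct and is essentially the paper's own deduction: the paper states Corollary~\ref{pullback} as an immediate consequence of Theorem~\ref{compar3}, and the intended argument is exactly the one you spell out — the functors \eqref{fp} and \eqref{fdr} are the associated-bundle functors of the torsors $\CM_p$ and ${}_K\CM^{\rm can}$, pullback along $Q$ and $\phi$ commutes with this construction, and applying $(-)\times^{K_h}W$ to the torsor isomorphism of Theorem~\ref{compar3} (which by its statement extends that of Proposition~\ref{compar}) yields the tensor-functor isomorphism lifting \eqref{open}. Your remark that one works with semisimple representations (via $\mathrm{Vect}^{\rm ss}_G(\hat{X})\simeq \Rep(K_h)$) matches the paper's caveat following Proposition~\ref{compar}.
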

 
 We return to this theme in the next paragraph.  Recall briefly how the Hecke algebra acts on automorphic bundles on $ {} _KS(G,X)$. See \cite[Section 1.2]{EH17}.
  Fix $K \subset G(\af)$.  Let $h \in G(\af)$
 and consider $K_h = K \cap hKh^{-1} \subset K$.  
   Right multiplication by $h$ defines an isomorphism 
 $r_h:  {}_{hKh^{-1}}S(G,X)  \isoarrow {}_KS(G,X).$
 One defines
\ga{bdl}{T(h) [\mathcal{E}]_K=  r_{h*}\circ \pi_{ hKh^{-1}, K_h*} \circ \pi_{K, K_h}^* [\mathcal{E}]_K.}
The projection formula implies 
\ga{}{ T(h) [\mathcal{E}]_K= [\mathcal{E}]_K \otimes \pi_{K, K_h*} \mathcal{O}_{ {}_{K_h}S(G,X).    } \notag}
As one has pullbacks and push-downs for $K_{0,\mathbb{Q}}$ and for $\ell$-adic cohomology, one has on them an action of the Hecke algebra  $\mathcal{H}_K$, spanned by the $T(h)$  for $h  \in G(\af)$, with $h$ trivial at $p$ and at ramified places for $K$.  In particular,  formula \eqref{bdl} implies the formula
 \ga{vol}{ {\rm class \ of} \  T(h) [\mathcal{E}]_K = {\rm class \ of } \  [\mathcal{E}]_K^{[K:K_h]} \ {\rm in} \  K_0( {}_KS(G,X))_{\mathbb{Q}.}}
 Recall that the volume character of the Hecke algebra spanned by the $T(h)$ with values in a ring $R$, is the character sending $T(h)$ to $[K:K_h]$ viewed as an element in $R$ \cite[Lemma~1.15]{EH17}.  Thus 
 \eqref{vol} is saying that  classes of automorphic bundles in $K_0( {}_KS(G,X))_{\mathbb{Q}}$ are eigenvectors  for the volume character under the action of $\mathcal{H}_K$.

 \medskip

 Let $\ell \neq p$ be a prime number. 
 
 \begin{lem}
 The isomorphism $r_h$ induces an isomorphism  
 \ga{}{ r_h^{\rm min}:  {}_KS(G,X)^{\rm min}   \isoarrow {}_{hKh^{-1}}S(G,X)^{\rm min}.\notag}
 The finite morphism $\pi_{K, K_h}:  {}_{K_h}S(G,X) \to {}_KS(G,X)$ extends to a finite morphism
 \ga{}{\pi^{\rm min}_{K, K_h}:  {}_{K_h}S(G,X)^{\rm min} \to {}_KS(G,X)^{\rm min} \notag}
and the action of $\mathcal{H}_K$ on $H^{2n}({}_KS(G,X),\mathbb{Z}_\ell(n)) $   extends to an action on 
$H^{2n}({}_KS(G,X)^{\rm min},\mathbb{Z}_\ell(n))$.

 \end{lem}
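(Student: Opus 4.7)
All three claims follow from the functoriality of the Baily--Borel--Satake compactification, already recalled in Section~2.1; the content of the lemma is to unpack this in the specific setting of Hecke correspondences. The plan is to construct each extended morphism, verify finiteness, and then assemble the Hecke operator on the cohomology of the minimal compactification.

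The isomorphism $r_h^{\rm min}$ comes for free from functoriality of the minimal compactification: on the open Shimura varieties, $r_h$ is an isomorphism induced by right translation by $h$, and the Baily--Borel compactification is uniquely characterized (as a normal projective completion with prescribed boundary strata), so $r_h$ extends uniquely. Equivalently, realizing ${}_KS(G,X)^{\rm min}$ as $\mathrm{Proj}$ of the graded ring of sections of a fixed ample automorphic line bundle, right translation by $h$ induces a graded ring isomorphism, and $r_h^{\rm min}$ is its $\mathrm{Proj}$. The extension $\pi^{\rm min}_{K, K_h}$ of the finite \'etale cover $\pi_{K, K_h}$ is obtained by the same functoriality applied to the inclusion $K_h \subset K$. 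Both source and target being projective, this extended morphism is automatically proper; finiteness reduces to quasi-finiteness of the fibers, which is immediate over the open Shimura variety and, on the boundary, follows from the intrinsic description of boundary strata in the Satake compactification as Shimura varieties for Levi subgroups at finite level, for which the induced map on strata is again a finite \'etale cover (of degree bounded by $[K:K_h]$).

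With $r_h^{\rm min}$ and $\pi^{\rm min}_{K, K_h}$ (and likewise $\pi^{\rm min}_{hKh^{-1}, K_h}$) now in hand, the Hecke correspondence underlying $T(h)$ extends to
$${}_KS(G,X)^{\rm min} \xleftarrow{\pi^{\rm min}_{K,K_h}} {}_{K_h}S(G,X)^{\rm min} \xrightarrow{r^{\rm min}_h \circ \pi^{\rm min}_{hKh^{-1},K_h}} {}_KS(G,X)^{\rm min},$$
with both legs finite, hence proper. Defining
$$T(h)^{\rm min} := r_{h*}^{\rm min} \circ \pi^{\rm min}_{hKh^{-1}, K_h *} \circ \pi^{{\rm min}*}_{K, K_h}$$
on $H^{2n}({}_KS(G,X)^{\rm min}, \mathbb{Z}_\ell(n))$ gives the desired action, and its compatibility with $T(h)$ on $H^{2n}({}_KS(G,X), \mathbb{Z}_\ell(n))$ under restriction to the open Shimura variety is built into the construction, since restriction to the open stratum recovers the original correspondence diagram on which $T(h)$ was defined. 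The only nontrivial step is the finiteness of $\pi^{\rm min}_{K, K_h}$; once that is in place, the rest is formal manipulation of proper pullback and pushforward in $\ell$-adic cohomology.
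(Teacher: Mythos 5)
Your proposal is correct in substance but follows a genuinely different route from the paper. You obtain $r_h^{\rm min}$ and $\pi^{\rm min}_{K,K_h}$ from functoriality of the Baily--Borel compactification and then deduce finiteness from ``proper $+$ quasi-finite'', checking quasi-finiteness stratum by stratum using the description of the boundary as arithmetic quotients for Levi data (your parenthetical ``finite \emph{\'etale}'' on the boundary is an overclaim, but only finiteness is used). The paper instead constructs the extensions and their finiteness in one stroke: by \cite[Theorems 10.11, 10.14]{BB66} the minimal compactification is $\Proj$ of the finitely generated graded ring $\oplus_n H^0({}_KS(G,X),\omega^n)$ of sections of powers of the dualizing sheaf, and $r_h$, $\pi_{K,K_h}$ induce a graded isomorphism, resp.\ a graded injection, whose $\Proj$ gives $r_h^{\rm min}$ and $\pi^{\rm min}_{K,K_h}$; this forces the case $G^{\rm ad}\supset\PGL(2)_\QQ$ (modular curves) to be split off and treated as standard, a case distinction your argument avoids. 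The one place where you are glibber than you should be is the last step: pushforward in $\ell$-adic cohomology along $\pi^{\rm min}_{K,K_h}$ is not ``formal manipulation of proper pullback and pushforward'', since the compactifications are singular and the map need not be flat; what is needed is a trace morphism $\pi^{\rm min}_{K,K_h*}\mathbb{Z}_\ell \to \mathbb{Z}_\ell$. The paper supplies exactly this by extending the trace from the open stratum, using normality of the minimal compactification to identify $j_{K*}\pi_{K,K_h*}\mathbb{Z}_\ell = \pi^{\rm min}_{K,K_h*}\mathbb{Z}_\ell$ and $j_{K*}\mathbb{Z}_\ell = \mathbb{Z}_\ell$; you should at least record this point (finiteness plus normality of the target), after which your definition of $T(h)^{\rm min}$ and its compatibility with $T(h)$ under restriction are indeed immediate.
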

 \begin{proof}  When $G = \GL(2)_\QQ$ the Shimura varieties are finite unions of modular curves, and the result in that case is standard.  We may thus assume $G^{\rm ad}$ contains no factor isomorphic to $\PGL(2)_\QQ$.   Then we know by Theorem~10.14 of \cite{BB66} that $H^0({}_KS(G,X), \omega^n)$ is finite dimensional for any $n \geq 0$, where 
$\omega$ is the dualizing sheaf.   The isomorphism  $r_h$ induces an algebra isomorphism  
\ga{}{ \oplus_{n\in \mathbb{N}} H^0({}_KS(G,X), \omega^n)  \isoarrow \oplus_{n\in \mathbb{N}}H^0(_{hKh^{-1}}S(G,X), \omega^n) \notag.}     Similarly the finite cover $\pi_{K, K_h}$ induces an injective algebra morphism
\ga{}{ \oplus_{n\in \mathbb{N}} H^0({}_KS(G,X), \omega^n)  \hookrightarrow \oplus_{n\in \mathbb{N}}H^0({}_{K_h}S(G,X), \omega^n) \notag.} 
Since ${}_KS(G,X)^{\rm min}$ (resp. ${}_{K_h}S(G,X)^{\rm min}$) is $\Proj$ of the left-hand side (resp. right-hand side) of the last diagram \cite[Theorem~10.11]{BB66}, the induced map on the $\Proj$ defines the extensions $r^{\rm min}_h$ and $\pi_{K,K_h}^{\rm min}$.
On the other hand, pullback on cohomology is defined, while the trace map ${\rm Tr}:  \pi_{K, K_h *} \mathbb{Z}_\ell\to \mathbb{Z}_\ell$ extends to 
$j_{K*}   \pi_{K, K_h*} \mathbb{Z}_\ell=  \pi^{\rm min}_{K, K_h *} j_{K_h*} \mathbb{Z}_\ell = \pi^{\rm min}_{K, K_h *} \mathbb{Z}_\ell \to j_{K*}\mathbb{Z}_\ell=\mathbb{Z}_\ell.$ This proves the second part.  
\end{proof}

 Let $E(G,X)=:E$ be the reflex field. Fix an embedding $E\hookrightarrow \bar{\QQ}$. We choose a place $w$ of $E$ dividing the prime $p$ which we assume to be different from  $\ell$. We extend $E\hookrightarrow \bar{\QQ}$ to the completion $E_w \hookrightarrow \bar{\QQ}_p$.  We denote by $G_{E_w}, G_E$ the Galois groups of $E_w, E$. We add a subscript on the lower right to indicate the field over which we regard the varieties considered. 
 We denote by   $H^i({}_KS(G,X)^{\rm min}_{ \bar{\QQ}}, \mathbb{Q}_\ell)_v =H^i({}_KS(G,X)^{\rm min}_{ \bar{\QQ}_p}, \mathbb{Q}_\ell)_v
 $ the eigenspace under the volume character of the action of $\mathcal{H}_K$.

Through the end of this section, we  set $K_0=K$  for the neat subgroup $K\subset  G(\af)$  and use
 the notation $H^i(\Lambda(j))$ for \'etale cohomology with coefficients $\Lambda$,  which are either $\mathbb{Z}_\ell$ or $\mathbb{Q}_\ell$. 
\begin{thm}\label{chernp}   Let $(G,X)$ be a Shimura datum of Hodge type. For any prime $\ell \neq p$, any $\CE \in {\rm Vect}^{\rm ss}_G(\hat{X}_E)$, we define  $\ell$-adic Chern classes
\ga{}{
c_{n, \ell}([\CE]_p){}_{K_r} \in H^{2n}\left({}_{K_r}S(G,X)^{\rm min}_{\bar{\QQ}_p}, \Lambda(n)\right)_v^{G_{E_w}},  \   \ n  \  \in \ \mathbb{N}\notag}
where $\Lambda=\mathbb{Q}_\ell$ for $r=0$ and $\mathbb{Z}_\ell$ for $r\ge 1$, with
the following properties:

\begin{itemize}
\item[\rm (1)]  $ j_{K_r}^*(c_{n,\ell}([\CE]_p){}_{K_r}) = c_{n, \ell}([\CE]_{K_r}) \in H^{2n}(_{K_r}S(G,X)_{\bar{\QQ}},\Lambda(n)),$
where the right hand side is the $\ell$-adic Chern class   of $[\CE]$ on $_{K_r}S(G,X)$.
\item[\rm (2)]  If $K' \subset K$ then $\pi^{\rm min *}_{K_r,K'_r}(c_{n, \ell}([\CE]_p){}_{K_r})=c_{n, \ell}([\CE]_p){}_{K_r'}$.
\item[\rm (3)]   $r_h^{\rm min *}(c_{n,\ell}([\CE]_p){}_{K_r}) = c_{n,\ell}([\CE]_p)_{hK_rh^{-1}}$ for $h\in G(\af)$ with trivial component at $p$ and at places ramified for $K$.
\item[\rm (4)]   Whitney product formula: If $0\to  \mathcal{E}_1\to  \mathcal{E} \to \mathcal{E}_2\to 0$ is an exact sequence in ${\rm Vect}^{\rm ss}_G(\hat{X}_E)$, then 
$$c_{n,\ell}([\CE]_p){}_{K_r})=\oplus_{a+b=n} c_{a,\ell}([\CE _1]_p){}_{K_r}) \cdot  c_{b,\ell}([\CE _2]_p){}_{K_r}).$$
\item[\rm (5)] If $(G',X')\to (G, X)$ is a morphism of Shimura data of Hodge type,  $K'\to K$ is a compatible level, with $K_r$ and $K_r'$ compatibly chosen, and
$$f:  {}_{K_r'}S(G',X')^{\rm min} \ra {}_{K_r}S(G,X)^{\rm min}$$
the corresponding morphism of Shimura varieties, 
then  $$f^* (c_{n,\ell}([\CE]_p){}_{K_r}) =(c_{n,\ell}([f^*\CE]_p){}_{K_r'})  \in
 H^{2n}({}_{K_r'}S(G',X')^{\rm min}_{\bar{\QQ}_p},\Lambda(n))_v^{G_{E_w}}.$$
\end{itemize}
\end{thm}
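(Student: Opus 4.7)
The plan is to construct $c_{n,\ell}([\CE]_p)_{K_r}$ by first producing Chern classes of the perfectoid bundle $[\CE]_p$ from \eqref{fp} living on the perfectoid minimal compactification $\overline{{}_{K^p}\CS(G,X)}^{\rm min}$, then descending them to each finite level by the descent theory of the Appendix, and finally verifying the key compatibility via Corollary~\ref{pullback} on $Z_s$ together with the cohomological injectivity in Proposition~\ref{CY}(ii).

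Concretely, for every $m\ge 1$ the bundle $[\CE]_p$ has \'etale Chern classes
$\tilde c_n \in H^{2n}(\overline{{}_{K^p}\CS(G,X)}^{\rm min}, \ZZ/\ell^m(n))$.
Scholze's $\sim$-relation from Theorem~\ref{perfec2} combined with \cite[Corollary~7.18]{Sch12} presents this group as a colimit of the cohomologies at finite level $\overline{{}_{K_r}\CS(G,X)}^{\rm min}$, while the descent results of Appendix~\ref{sec:app} produce canonical lifts $\bar c_n^{(r,m)}$ of $\tilde c_n$ at every sufficiently large finite level $r$, compatibly in both $r$ and $m$. Pulling these back along the finite normalization $u_r : {}_{K_r}\CS(G,X)^{\rm min} \to \overline{{}_{K_r}\CS(G,X)}^{\rm min}$ and taking the inverse limit in $m$ yields the desired classes $c_{n,\ell}([\CE]_p)_{K_r}$, integrally for $r\ge 1$ and rationally for $r=0$, matching the coefficient dichotomy imposed by Proposition~\ref{CY}(ii).

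The decisive step is the toroidal compatibility: one must show that $\varphi_{\Sigma}^*\, c_{n,\ell}([\CE]_p)_{K_s}$ equals the usual Chern class $c_n([\CE^{\rm can}]_{K_s})$ on ${}_{K_s}S(G,X)_\Sigma$, from which property~(1) then follows by restriction to the open part. The strategy is to test both sides after a further pullback by $\phi : Z_s \to {}_{K_s}S(G,X)_\Sigma$. Corollary~\ref{pullback} supplies a canonical isomorphism $Q^*[\CE]_p \simeq \phi^*[\CE^{\rm can}]$ of $K_h$-torsors, so their Chern classes agree in $H^{2n}(Z_s,\Lambda(n))$; Proposition~\ref{CY}(ii) then cancels $\phi^*$ and delivers the identity already on the toroidal side. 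Galois equivariance for $G_{E_w}$ is automatic because every step is carried out canonically in the adic category over $C$, with $G_{E_w}$ acting through $\pi_{HT}$.

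Properties (2), (3) and (5) follow from the functoriality of the perfectoid tower, of $\pi_{HT}$, and of \eqref{fp} with respect to level change, Hecke translations, and morphisms of Shimura data: each such symmetry lifts to a compatible morphism of perfectoid models and is transported to finite level by Step~1. The Whitney formula~(4) reflects the monoidal nature of $\CE \mapsto [\CE]_p$. The volume-eigenspace property is inherited from the identity \eqref{vol} valid for $[\CE^{\rm can}]_{K_r}$ on the toroidal side, once the Hecke correspondences are extended to the minimal compactification by the preceding lemma, and is propagated through the perfectoid comparison just as in Step~3. The main obstacle is Step~1: making the descent from the perfectoid tower down to each finite level $r$ simultaneously canonical, compatible in $r$ and $m$, and equivariant for all of the symmetries listed; this is precisely what Appendix~\ref{sec:app} is designed to provide.
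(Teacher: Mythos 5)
Your construction of the classes follows the paper's route in outline (Chern classes of $[\CE]_p$ at infinite level via Proposition \ref{p1bundle}, descent through the Appendix, pullback along $u_r$), and your detour through $Z_s$ and Proposition \ref{CY}(ii) for the toroidal compatibility is exactly the argument the paper uses later for Theorem \ref{pullback_Chern} --- the paper gets property (1) more cheaply, directly from the open comparison of Proposition \ref{compar}. But there is a genuine gap in your treatment of the volume-character property. You propose to inherit it from \eqref{vol} ``on the toroidal side'' and propagate it through the comparison; this cannot work, because the Hecke correspondences $T(g)$ do \emph{not} act on a fixed toroidal compactification ${}_{K_r}S(G,X)_\Sigma$ (the $g$ permute the fans $\Sigma$ --- this is precisely the obstruction emphasized in the introduction), and neither $j_{K_r}^*$ to the open Shimura variety nor $\varphi_\Sigma^*$ to the toroidal compactification is known to be injective at this stage, so you cannot verify an eigenvector identity in $H^{2n}({}_{K_r}S(G,X)^{\rm min},\Lambda(n))$ after such a pullback. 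The paper's argument is different and is the essential point: the Hecke correspondences (trivial at $p$) do act on the infinite-level space $\overline{{}_{K^p}\CS(G,X)}^{\rm min}$, where the bundle $[\CE]_p$ itself lives, and the projection formula applied to $T(g)[\CE]_p = [\CE]_p \otimes \pi_* \mathcal{O}$ shows that the classes already lie in the $v$-eigenspace of $\varprojlim_m H^{2n}(\overline{{}_{K^p}\CS(G,X)}^{\rm min},\mathbb{Z}/\ell^m(n))$ before descent; the $v$-property at finite level is then inherited through the (Hecke-equivariant) descent.

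A second, smaller gap is in your Step 1: Proposition \ref{etaleinvariant} does not ``produce canonical lifts'' of arbitrary classes --- it only maps the $K_{p,r}$-\emph{invariant} part of the infinite-level cohomology to finite level. So you must first prove that the classes $\bar c_{n,\ell}([\CE]_p)$ are invariant under $K_{p,r}$ for all $r$, and under $G_{E_w}$. Both facts come from the same source, which you leave implicit or misattribute: $[\CE]_p = \pi_{HT}^*\CE$, where $\pi_{HT}$ is $G(\Qp)$-equivariant and $\CE$ is a bundle on the flag variety already defined over $\hat X_E$; equivariance of the bundle up to isomorphism forces invariance of its Chern classes. Your assertion that Galois equivariance is ``automatic because every step is carried out canonically in the adic category over $C$'' is not a proof --- canonicity over the completed algebraic closure says nothing about $G_{E_w}$-invariance; the rationality of $\CE$ over $E$ is what is actually used.
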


\begin{proof} 
We first construct $c_{n, \ell}([\CE]_p){}_{K_r}$ without the $v$ invariance. The vector bundle $[\CE]_p$ from \eqref{fp} has Chern classes 
$$\bar{c}_{n, \ell}( [\CE]_p) \in \varprojlim_m H^{2n}(\overline{{}_{K^p}\mathcal{S}(G,X)}^{\rm min},\mathbb{Z}/\ell^m (n)), \ n\in \mathbb{N}$$ by Proposition~\ref{p1bundle}. On the other hand, $[\CE]_p$ is defined as $\pi_{HT}^*\mathcal{E}$,  where $\mathcal{E}$ is viewed as a
vector bundle on the $C$-adic space $\hat {\mathcal{X}}$. But $\mathcal{E}$  is already defined on $\hat{X}_E$. In particular, with $K_{p,r}$ defined as in 
 Section~\ref{s:HT}, the classes 
$\bar{c}_{n, \ell}( [\CE]_p)$ are invariant under the action of $K_{p,r}\subset G(\mathbb{Q}_p)$  
for all $r$ and by $G_{E_w}$. By Proposition~\ref{etaleinvariant} it follows that the classes  
$\bar{c}_{n, \ell}( [\CE]_p) $  uniquely define classes
$$ c_{n, \ell}( [\CE]_p) \in H^{2n}(  \overline{{}_{K_r} S(G,X)}^{\rm min}_{\bar{\mathbb{Q}}_p},\mathbb{Z}_\ell(n))^{G_{E_w}}$$ 
for $r\ge 1$.  Since $K/K_1$ is finite, we can descend to $\mathbb{Q}_\ell$-cohomology at level $K$. 
We thus have descended classes
$$\bar{c}_{n, \ell}([\CE]_p){}_{K_r} \in H^{2n}(  \overline{{}_K S(G,X)}^{\rm min}_{\bar{\mathbb{Q}}_p},\mathbb{Q}_\ell(n))^{G_{E_w}}.$$
Letting $u_r:  {}_{K_r} S(G,X)^{\rm min} \ra \overline{{}_{K_r} S(G,X)}^{\rm min}$ denote the natural map, as in \eqref{composite}, the classes
$$c_{n, \ell}([\CE]_p){}_{K_r} := u_r^*(\bar{c}_{n, \ell}([\CE]_p){}_{K_r})$$
are the desired ones.

Property (1) is then a direct consequence of Proposition~\ref{compar}. Property (2)  follows directly from the construction using descent via Proposition~\ref{etaleinvariant}.  As for Property (3), using descent again, it is enough to see that $r_h^{\rm min}$ extends to an isomorphism  $$ r^{\rm perf, min}_h:
{}_{K_r}\mathcal{S}(G,X)^{\rm min} \to {}_{hK_rh^{-1}}\mathcal{S}(G,X)^{\rm min}$$ of perfectoid spaces, which respects $[\CE]_p$. (See \cite[Theorem 4.1.1\,(iv)]{Sch15} for the Siegel modular case; the general case is identical.)  Our notation $[\CE]_p$ does not refer to $K_p$. One should replace  the notation
$[\CE]_p$ with
 $[\CE]_p^{K_p}$. Then the compatibility means $r_h^{\rm perf, min*} [\CE]_p^{hK_ph^{-1}}=[\CE]^{K_p}_p$, which follows from Theorem~\ref{perfec2} together with the addendum Theorem~\ref{thmCS}.
 Property (4) follows directly from the Whitney product formula for the Chern classes of $[\mathcal{E}]_p$ and Property (5) of the functoriality of the construction of ${}_{K^p}\mathcal{S}(G,X)^{\rm min}$ (\cite{Sch15}, {\it loc.cit.}, comes from the construction, even if this is not explicitly mentioned). 
 It remains to see  that $c_{n, \ell}([\CE]_p){}_{K_r} \in H^{2n}({}_{K_r}\mathcal{S}(G,X)^{\rm min}_{\bar{\mathbb{Q}}_p},\Lambda(n))_v$.  
By construction, this follows from  $$\bar{c}_{n, \ell}( [\CE]_p) \in \varprojlim_m H^{2n}\left(\overline{{}_{K^p}\mathcal{S}(G,X})^{\rm min},\mathbb{Z}/\ell^m (n)\right)_v, \ n\in \mathbb{N}, $$
which again follows by the projection formula from the trivial relation
\ga{}{ T(g) [\mathcal{E}]_p= [\mathcal{E}]_p \otimes_{ \mathcal{O}_{ {}_{K^p}\CS(G,X)^{\rm min}} }    \mathcal{O}_{ {}_{K_h^p}\CS(G,X)^{\rm min}.    } \notag}
\end{proof}

\begin{nota} \label{nota:chl}
{\rm The functor 
\ga{}{
\left\{\begin{array}{rcl}
{\rm Rep}_{\bar{ \mathbb{Q}}}(K_h)   & \to & 
\{{\rm Vector \ Bundles \ on} \ {}_{K^p} \mathcal{S}(G, X)^{\rm min}\} \\
 \mathcal{E} & \mapsto &  [\mathcal{E}]_p 
\end{array}\right. ,\notag}
is a tensor functor,  which sends 
 ${\rm Rep}_{\bar{ \mathbb{Q}}}(G)$ to trivial bundles. 
It
induces the Chern character 
\ml{}{ K_0({\rm Rep}_{\bar{ \mathbb{Q}}}(K_h)  \otimes_{ {\rm Rep}_{\bar{ \mathbb{Q}}}(G)}\mathbb{Q}) \xrightarrow{ch_\ell}
H^{2*}\left({}_KS(G,X)^{\rm min}_{\bar{\mathbb{Q}}_p}, \mathbb{Q}_\ell(*)\right)^{G(E_w)}_v  \subset 
\left(\varprojlim_m H^{2*}\left({}_{K^p}\mathcal{S}(G,X)^{\rm min}, \mathbb{Z}/\ell^m(*)\right)\right) \otimes \QQ. \notag  }
}
\end{nota}

\subsection{The image of $H^*({}_KS(G,X)^{\rm min})$ in $H^*({}_KS(G,X)_\Sigma)$}

We now return to the topological setting.  In this section, ${}_KS(G,X)^{\rm min}$ and ${}_KS(G,X)_\Sigma$ 
are identified with the analytic spaces  underlying their $\CC$-valued points.  We fix $\Sigma$,
 use the notation~\eqref{2.1}
 and let $\varphi := \varphi_\Sigma$ denote the  desingularization map $\varphi_\Sigma:  {}_KS(G,X)_\Sigma \ra {}_KS(G,X)^{\rm min}$.  
 It is a projective morphism.  Recall from the general theory \cite[Theorem~2.8.1]{dCM05} that the choice of a polarization  $\mathcal{L}$ for $\varphi$ induces a factorization 
  \ga{fact}{ 
\vcenter{\vbox{\xymatrix{ \ar[drr]_{\varphi^*} H^i\left({}_KS(G,X)^{\rm min},\QQ\right) \ar[rr]^{\rm natural \ map} &  & IH^i\left({}_KS(G,X)^{\rm min},\QQ\right)    \ar@{^{(}->}[d]^{\varphi^*_{\mathcal L} }\\  
&  & H^i\left({}_KS(G,X)_\Sigma,\QQ\right) } 
}
}}
in which all morphisms are compatible with the polarized Hodge structure, and $\varphi_{\mathcal{L}}^*$ is injective  (see e.g. \cite[Corollary~2.8.2]{dCM05}).
For a prime $\ell \neq p$ and any $s\ge 0$, the diagram \eqref{fact} has an $\ell$-adic version
   \ga{factl}{ 
\vcenter{\vbox{
\xymatrix{ \ar[drr]_{\varphi^*} H^i\left({}_{K_s}S(G,X)^{\rm min},\mathbb{Q}_\ell (j)\right) \ar[rr]^{\rm natural \ map} &  & IH^i\left({}_{K_s}S(G,X)^{\rm min},\mathbb{Q}_\ell (j)\right)    \ar@{^{(}->}[d]^{\varphi^*_{\mathcal L} }\\  
&  & H^i\left({}_{K_s}S(G,X)_\Sigma,\mathbb{Q}_\ell (j)\right)  } 
}
}}
Thus by the comparison isomorphism   \cite[Section~6]{BBD82}, we conclude that $\varphi_{\mathcal{L}}^*$ in the diagram~\eqref{factl} is injective as well.
The image of $\varphi_{\mathcal{L}}^*$ might depend on the choice of $\mathcal{L}$, see  \cite[Example 2.9]{dCM05}.  Furthermore,  $IH^i({}_KS(G,X)^{\rm min},\QQ), $ thus 
$$\varphi_{\mathcal{L}}^*\left(IH^i({}_KS(G,X)^{\rm min},\QQ)\right)\subset  H^i({}_KS(G,X)_\Sigma,\QQ),  $$ 
is pure of weight $i$,  and 
$\varphi^*( H^i({}_KS(G,X)^{\rm min},\QQ)) \subset 
H^i({}_KS(G,X)_\Sigma,\QQ) $ is the maximal pure weight $i$ quotient  of $H^i({}_KS(G,X)^{\rm min},\QQ)$ by \cite[Proposition~8.2.5]{Del74}.

\begin{thm}\label{pullback_Chern} For any $\CE \in {\rm Vect}^{\rm ss}_G(\hat{X})$, any neat level subgroup $K \subset G(\af)$ and all $s \ge 0$, we have 
\ga{}{\varphi^*((c_{n,\ell}([\CE]_p){}_{K_s}) = c_{n,\ell}(( [\CE]^{\rm can}){}_{K_s}) \in H^{2n}\left(
{}_{K_s}S(G,X)_{\Sigma, \bar{\mathbb{Q}}}, \Lambda(n)\right)^{G_E}. \notag}
Here $\Lambda=\mathbb{Q}_\ell$ for $s =0$ and $\mathbb{Z}_\ell$ for $s \ge 1$, as in Theorem \ref{chernp}.  
\end{thm}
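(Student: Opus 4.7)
The strategy is to pull both sides back via $\phi$ to the perfectoid cover $Z_s$ of Proposition~\ref{CY}, verify equality there using Corollary~\ref{pullback} (which packages Theorem~\ref{compar3}), and then descend via injectivity of $\phi^*$.

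First I would unpack the various identifications. By Proposition~\ref{CY}(i), $Q = u_s \circ \varphi \circ \phi$ (identifying $\varphi$ with $q_s$ of \eqref{composite}). In the proof of Theorem~\ref{chernp}, $c_{n,\ell}([\CE]_p)_{K_s}$ is obtained as $u_s^*\bar c_{n,\ell}([\CE]_p)_{K_s}$ from a class $\bar c_{n,\ell}([\CE]_p)_{K_s} \in H^{2n}(\overline{{}_{K_s}S(G,X)}^{\rm min}_{\bar{\mathbb{Q}}_p},\Lambda(n))$, itself the descent of the $n$th Chern class of the perfectoid vector bundle $[\CE]_p$ of \eqref{fp}. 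Naturality of $\ell$-adic Chern classes, combined with the definition $[\CE]_{p,\Sigma,K_s} = Q^*[\CE]_p$, gives
$$
\phi^*\varphi^* c_{n,\ell}([\CE]_p)_{K_s} \;=\; Q^*\bar c_{n,\ell}([\CE]_p)_{K_s} \;=\; c_{n,\ell}\bigl([\CE]_{p,\Sigma,K_s}\bigr)
$$
in $H^{2n}(Z_s,\Lambda(n))$. On the other hand, from $[\CE]_{dR,\Sigma,K_s} = \phi^*[\CE]^{\rm can}$ and naturality again, $\phi^* c_{n,\ell}([\CE]^{\rm can})_{K_s} = c_{n,\ell}([\CE]_{dR,\Sigma,K_s})$. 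Corollary~\ref{pullback} furnishes a canonical isomorphism $[\CE]_{p,\Sigma,K_s} \simeq [\CE]_{dR,\Sigma,K_s}$ of vector bundles on $Z_s$, so
$$
\phi^*\varphi^* c_{n,\ell}([\CE]_p)_{K_s} \;=\; \phi^* c_{n,\ell}([\CE]^{\rm can})_{K_s} \quad \text{in } H^{2n}(Z_s,\Lambda(n)).
$$

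Next I would invoke injectivity of $\phi^*$ on cohomology, viewing $Y_s\simeq {}_{K_s}\CS(G,X)_\Sigma$ via $\eta_s$. For $s = 0$ with $\Lambda = \mathbb{Q}_\ell$ this is exactly Proposition~\ref{CY}(ii). For $s \geq 1$, Proposition~\ref{CY}(ii) only gives it with $\mathbb{F}_\ell$-coefficients, but its proof actually shows that the transition maps $Y_{s+\rho}\to Y_s$ are finite of $p$-power degree; since $p^\rho$ is a unit in $\mathbb{Z}/\ell^m$, trace composed with pullback is multiplication by a unit, so the map $H^\bullet(Y_s,\mathbb{Z}/\ell^m)\to H^\bullet(Y_{s+\rho},\mathbb{Z}/\ell^m)$ is injective for every $\rho$ and $m$. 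Passing to the direct limit over $\rho$ via \eqref{coh} and then the inverse limit over $m$ (Mittag--Leffler is automatic, the groups being finite) yields injectivity of $\phi^*$ with $\mathbb{Z}_\ell(n)$-coefficients. Combined with the standard comparison between the étale cohomology of the smooth projective scheme ${}_{K_s}S(G,X)_\Sigma$ and that of the associated $C$-adic space $Y_s$, together with $H^*({}_{K_s}S(G,X)_{\Sigma,\bar{\mathbb{Q}}_p},\Lambda(n)) = H^*({}_{K_s}S(G,X)_{\Sigma,\bar{\mathbb{Q}}},\Lambda(n))$, one obtains the asserted equality $\varphi^* c_{n,\ell}([\CE]_p)_{K_s} = c_{n,\ell}([\CE]^{\rm can})_{K_s}$.

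The $G_E$-invariance is then automatic: $c_{n,\ell}([\CE]^{\rm can})_{K_s}$ is the $\ell$-adic Chern class of a bundle defined over a finite extension of $E$ inside $\bar{\mathbb{Q}}$, and so lies in $H^{2n}({}_{K_s}S(G,X)_{\Sigma,\bar{\mathbb{Q}}},\Lambda(n))^{G_E}$ (the isomorphism class of $[\CE]^{\rm can}$ is $G_E$-stable and Chern classes depend only on this class). The main delicate point in the plan is the upgrade of Proposition~\ref{CY}(ii) from $\mathbb{F}_\ell$ to $\mathbb{Z}_\ell$-coefficients for $s\geq 1$; this is where the hypothesis $\ell \neq p$ is essential, as it is exactly the invertibility of the degrees $p^\rho$ of the maps $Y_{s+\rho}\to Y_s$ in $\mathbb{Z}/\ell^m$ that produces the required splittings.
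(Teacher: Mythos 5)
Your proposal is correct and follows essentially the same route as the paper's proof: pull both classes back to $Z_s$, identify them there via Theorem~\ref{compar3} (packaged as Corollary~\ref{pullback}), descend using the injectivity of $\phi^*$ from Proposition~\ref{CY}, and obtain the $G_E$-invariance from the fact that $[\CE]^{\rm can}$ is defined over (a finite extension of) $E$ together with the comparison isomorphism. Your explicit upgrade of the injectivity of $\phi^*$ from $\mathbb{F}_\ell$- to $\mathbb{Z}/\ell^m$- and $\mathbb{Z}_\ell$-coefficients for $s\ge 1$, via the $p$-power degrees of the maps $Y_{s+\rho}\to Y_s$, is a fleshing-out of a point the paper leaves implicit when it invokes Proposition~\ref{CY}.
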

\begin{proof}   We place ourselves in the situation of Theorem~\ref{compar3} which we use; in particular we fix the natural number $s$.  The proof assuming Hypothesis \ref{projtor} is  the same without fixing $s$.  
It suffices to check the claims with $c_{n,\ell}([\CE]_p){}_{K_s}$ replaced by $\bar{c}_{n,\ell}([\CE]_p){}_{K_s}$.

Notation is as in Section \ref{sec:covers}.    For $s \geq 0$ we have the  diagram with commutative squares

\ga{W2}{ 
\vcenter{\vbox{
\xymatrix{ 
& \ar@/^2pc/[rr]^Q  \ar[d]_{\phi} Z_s \ar[r]^{\psi} &  \overline{\ar[d]_{ \pi_{K_s, K^p}}    {}_{K^p}\sS(G,X)_{\Sigma}} \ar[r]^{\bar q} & \ar[d]^{\pi_{K_s, K^p}^{\rm min}} 
\overline{{}_{K^p}\sS(G,X)}^{\rm min}   \\
&_{K_s} \sS(G,X)_\Sigma \ar[r]_\g   & \overline{ _{K_s}\sS(G,X)_\Sigma  } \ar[r]_\delta & \overline{_{K_s} \sS(G,X)}^{\rm min}} }}  }
Here $\delta \circ \gamma = \varphi$ and $\delta, \bar q$ come from the definitions~\eqref{bar} and \eqref{bar2}.

 By Theorem~\ref{compar3} one has  
\begin{align}
 (\pi_{K_s, K^p}^{\rm min}\circ Q)^* \left(\bar{c}_{n,\ell} ([\mathcal{E}]_p))_{K_s}\right) & =
 \phi^* (\delta \circ \gamma)^* (\bar{c}_{n,\ell} ([\mathcal{E}]_p))_{K_s}) 
\notag \\
& =   Q^* (\bar{c}_{n,\ell} ([\mathcal{E}]_p)) =   \phi^*(\bar{c}_{n,\ell}( [\CE]^{\rm can})) 
\notag \\   
& \in Q^* H^{2n}\left({}_{K^p}\CS(G,X)^{\rm min}, \Lambda(n)\right)   \subset H^{2n}( Z_s,\Lambda(n)).
\notag
\end{align}
Since \eqref{W2} commutes, and since $\phi^*$ is injective by Proposition \ref{CY}, we thus have
\begin{equation}\label{confirm} c_{n,\ell}( [\CE]^{\rm can}) = ( \delta \circ \gamma)^*(\bar{c}_{n,\ell} ([\mathcal{E}]_p))_{K_s}) = \varphi^*((\bar{c}_{n,\ell}([\CE]_p){}_{K_s}),
\end{equation}
as classes in  
$H^{2n}({}_{K_s}\CS(G,X)_{\Sigma, \bar{\mathbb{Q}}}, \Lambda(n)).$
On the other hand, 
$$H^{2n}\left({}_{K_s}\CS(G,X)_{\Sigma, \bar{\mathbb{Q}}}, \Lambda(n)\right)= H^{2n}\left({}_{K_s} S(G,X)_{\Sigma, \bar{\mathbb{Q}}}, \Lambda(n)\right)$$
by the comparison isomomorphism \cite[Theorem~4.2]{Hub98}.
In addition, the automorphic bundles $[\CE]^{\rm can}$ are defined over $E$. Thus 
 the classes in \eqref{confirm}   lie in
$H^{2n}({}_{K_s} S(G,X)_{\Sigma, \bar{\mathbb{Q}}}, \Lambda(n))^{G_E}$. 
 This finishes the proof. 
 \end{proof}

\medskip

Let 
\ga{}{H^{2*}_{\rm Chern}\left({}_{K}S(G,X)^{\rm min}_{\bar{\mathbb{Q}}_p},\mathbb{Q}_\ell(*)\right) \subset H^{2*}\left({}_{K}S(G,X)^{\rm min}_{\bar{\mathbb{Q}}_p},\mathbb{Q}_\ell(*)\right) \notag}  denote the $\mathbb{Q}_\ell$-subalgebra generated by the images of $c_{*,\ell}([\CE]_p){}_K$, and 
\ga{}{H^{2*}_{\rm Chern}({}_KS(G,X)_{\Sigma,\bar{\mathbb{Q}}_p},\mathbb{Q}_\ell(*)) \subset H^{2*}(_{K}S(G,X)_{\Sigma, \bar{\mathbb{Q}}_p}\mathbb{Q}_\ell(*)) \notag} 
denote the $\mathbb{Q}_\ell$-subalgebra generated by the images of $c_{*,\ell}([\CE]^{\rm can}_K){}_K$.  Here, when $G$ satisfies (i) of Definition \ref{plusdef}, we take $\CE$ to belong to ${\rm Vect}_G(\hat{X})^+$.  

\begin{prop} \label{Chern} The commutative diagram \eqref{factl} restricts to the commutative diagram of $\mathbb{Q}_\ell$-algebras
  \ga{eq:chern}{ 
\vcenter{\vbox{
\xymatrix{
    & &\ar[ddll]_{{\rm ch}_\ell \ \sim} \left[K_0(\Rep_\Qbar(K_h))\otimes_{K_0(\Rep_\Qbar(G))} \QQ_{\ell}\right]^+ \ar[d]^{ \sim (\rm  ch~\circ\eqref{01} )} \\
    & & H^{2*}\left(\hat X_{\bar{\mathbb{Q}}_p}, \mathbb{Q}_\ell(*)\right)^+ \ar[d]^{\sim ({\rm Prop}.~\ref{isomQ})}\\
   \ar[drr]_{\varphi^* \ \sim } H^{2*}_{\rm Chern}\left({}_KS(G,X)^{\rm min}_{\bar{\mathbb{Q}}_p},\QQ_\ell (*)\right) \ar[rr]^{\rm natural \ map  \  \sim}  &  & IH^{2*}({}_KS(G,X)^{\rm min}_{\bar{\mathbb{Q}}_p},\QQ_\ell(*))_v^+   \ar[d]^{\varphi^*_{\mathcal L} \ \sim}\\  
 &  & H^{2*}_{\rm Chern}\left({}_KS(G,X)_{\Sigma, \bar{\mathbb{Q}}_p},\QQ_\ell(*)\right)} 
}}
}
 in which $\varphi_{\mathcal L}^*$, ${\rm ch}_\ell$,   the natural map  and $\varphi^*$  are isomorphisms of $\mathbb{Q}_\ell$-algebras.
In particular 
\ga{}{\varphi_{\mathcal{L}}^*\left( IH^{2*}({}_KS(G,X)^{\rm min}_{\bar{\mathbb{Q}}_p},\QQ_\ell(*))^+_v\right)\subset  H^{2*}\left({}_KS(G,X)_{\Sigma,\bar{\mathbb{Q}}_p},\QQ_\ell(*)\right) \notag}  is a $\mathbb{Q}_\ell$-sub-vectorspace which does not depend on the choice of the relative polarization $\mathcal{L}$.
\end{prop}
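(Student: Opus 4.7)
My plan is to verify the commutativity of the diagram square by square, and then read off the isomorphism assertions from the known corner isomorphisms together with the injectivity of $\varphi^*_{\mathcal L}$. The main inputs are Theorem~\ref{pullback_Chern}, Proposition~\ref{isomQ}, Theorem~\ref{chernp}, and the factorization \eqref{factl}. First I would check that the vertical arrow ${\rm ch}_\ell$ is well defined on the displayed quotient: if $\CE \in \Vect_G(\hat X)$ is pulled back from a representation of $G$, then it is a trivial $G$-equivariant bundle on $\hat X$, hence $[\CE]_p = \pi_{HT}^*\CE$ is trivial on $\overline{{}_{K^p}\CS(G,X)}^{\rm min}$, so $c_{n,\ell}([\CE]_p)_K = 0$ for every $n > 0$. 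The two top vertical arrows on the right are isomorphisms by \eqref{01} and Proposition~\ref{isomQ}, each restricted to the $^+$-subalgebra of Definition~\ref{plusdef}. The commutativity of the lower triangle is the conjunction of \eqref{factl}, which gives $\varphi^* = \varphi^*_{\mathcal L} \circ (\text{natural map})$, with Theorem~\ref{pullback_Chern}, which identifies $\varphi^*(c_{n,\ell}([\CE]_p)_K)$ with $c_{n,\ell}([\CE]^{\rm can}_K)$; since these latter classes generate $H^*_{\rm Chern}({}_KS(G,X)_\Sigma, \QQ_\ell(*))$, the map $\varphi^*$ indeed lands in the target subalgebra.

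The core of the argument is the commutativity of the upper square: the image of $c_{n,\ell}([\CE]_p)_K$ in $IH^{2*}({}_KS(G,X)^{\rm min},\QQ_\ell(*))_v^+$ under the natural map must coincide, via Proposition~\ref{isomQ}, with the equivariant Chern class of $\CE$ on $\hat X$. Because $\varphi^*_{\mathcal L}$ is injective (by the polarized Hodge-theoretic content underlying \eqref{factl}), it suffices to verify the equality after applying $\varphi^*_{\mathcal L}$, i.e.\ inside $H^*({}_KS(G,X)_\Sigma, \QQ_\ell(*))$. The left-hand side then becomes $c_{n,\ell}([\CE]^{\rm can}_K)$ by the lower triangle; the $\CC$-coefficient version of this equality is precisely the Goresky--Pardon statement recorded as Theorem~\ref{gorpar}(iv) (respectively Theorem~\ref{gorpar2}), and the $\QQ_\ell$-statement is obtained from it by Betti--\'etale comparison together with Theorem~\ref{pullback_Chern}. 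In the $^+$-case one further reduces to checking the equality on the basic generators $c_{1,r}$ of Definition~\ref{plusdef}.

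Granted commutativity, the isomorphism claims follow by a diagram chase. The map ${\rm ch}_\ell$ is surjective by the very definition of $H^*_{\rm Chern}({}_KS(G,X)^{\rm min})$, and injective because the top composition to $IH^*_v^+$ is already an isomorphism; the natural map and $\varphi^*$ are then also isomorphisms, using injectivity of $\varphi^*_{\mathcal L}$. In particular $\varphi^*_{\mathcal L}(IH^{2*}({}_KS(G,X)^{\rm min},\QQ_\ell(*))_v^+) = H^{2*}_{\rm Chern}({}_KS(G,X)_\Sigma, \QQ_\ell(*))$, and the right-hand side is manifestly defined without reference to the polarization $\mathcal L$, yielding the final assertion. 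The main obstacle is the upper square: matching the $\ell$-adic Chern class produced by Scholze's perfectoid Hodge--Tate construction with the Matsushima automorphic identification of $IH^*_v$ with $H^*(\hat X)$. This is where the complex-analytic work of Goresky--Pardon enters decisively, and one has to propagate the matching carefully from $\CC$- to $\QQ_\ell$-coefficients and, in the even-dimensional quadric case, confirm that the perfectoid Chern classes remain inside the $^+$-subalgebra.
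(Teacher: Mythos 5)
Your proposal is correct and follows essentially the same route as the paper: since there is no direct comparison between the perfectoid classes and the Goresky--Pardon classes, you verify the upper square only after applying the injective map $\varphi^*_{\mathcal L}$, where Theorem~\ref{gorpar2} together with Betti--\'etale comparison on one side and Theorem~\ref{pullback_Chern} (at level $K$, i.e.\ $s=0$) on the other give the same class $c_{n,\ell}([\CE]^{\rm can})$, and the isomorphism statements then follow by the same diagram chase using the surjectivity of ${\rm ch}_\ell$ onto $H^{2*}_{\rm Chern}$ and the injectivity of $\varphi^*_{\mathcal L}$. This is precisely the paper's argument, so no further comment is needed.
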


\begin{proof} 
By Theorem \ref{gorpar2}, the image of the Goresky-Pardon Chern classes $c_n([\CE]_K)^{GP}$ in the algebra
$IH^{2*}({}_KS(G,X)^{\rm min},$ $\mathbb{Q}(*))$  lies in $IH^{2*}({}_KS(G,X)^{\rm min},  \mathbb{Q}(*))^+_v$ and spans this  subalgebra over $\mathbb{Q}$. In particular, the image of
$$H^{2*}\left({}_KS(G,X)^{\rm min},  \mathbb{Q}(*)\right)_v \xrightarrow{\rm natural \ map} IH^{2*}\left({}_KS(G,X)^{\rm min},  \mathbb{Q}(*)\right)_v$$ contains $IH^{2*}({}_KS(G,X),  \mathbb{Q}(*))^+_v$, and the map
$$ \varphi_{\mathcal{L}}^*:  IH^{2*}\left({}_KS(G,X),  \mathbb{Q}(*)\right)^+_v \to  H^{2*}_{\rm Chern}\left({}_KS(G,X)_{\Sigma},\QQ(*)\right)^+$$
is surjective.
By comparison with $\ell$-adic intersection cohomology, the image of
$$H^{2*}\left({}_KS(G,X)_{\bar{\mathbb{Q}}_p},  \mathbb{Q}_\ell(*)\right)_v \xrightarrow{\rm natural \ map} IH^{2*}\left({}_KS(G,X)_{\bar{\mathbb{Q}}_p},  \mathbb{Q}_\ell(*)\right)_v$$
contains $IH^{2*}({}_KS(G,X)_{\bar{\mathbb{Q}}_p},  \mathbb{Q}_\ell(*))_v^+$, and 
$$ \varphi_{\mathcal{L}}^*:  IH^{2*}\left({}_KS(G,X)_{\bar{\mathbb{Q}}_p},  \mathbb{Q}(*)\right)^+_v \to  H^{2*}_{\rm Chern}\left({}_KS(G,X)_{\Sigma, {\bar{\mathbb{Q}}_p}},\QQ(*)\right)$$
is surjective as well.

We have no direct way to compare the Goresky-Pardon classes to our classes from Theorem~\ref{chernp} on ${}_KS(G,X)_{\bar{\mathbb{Q}}_p}$. However,  applying
Proposition~\ref{pullback_Chern} for $s=0$, i.e. for $K_0=K$,  one has 
\ga{}{\varphi_{\mathcal{L}}^* \left(H^{2*}_{\rm Chern}\left({}_KS(G,X)^{\rm min}_{\bar{\mathbb{Q}}_p},\QQ_\ell (*)\right)\right)= 
H^{2*}_{\rm Chern}\left({}_KS(G,X)_{\Sigma, \bar{\mathbb{Q}}_p},\QQ_\ell(*)\right).\notag} 
As $\varphi_{\mathcal{L}}^*$ is injective, we  deduce that
the natural map sends 
 the $\mathbb{Q}_\ell$-subalgebra $H^{2*}_{\rm Chern}({}_KS(G,X)^{\rm min}_{\bar{\mathbb{Q}}_p},\QQ_\ell (*)) $  onto 
$IH^{2*}({}_KS(G,X)^{\rm min}_{\bar{\mathbb{Q}}_p},\QQ_\ell(*))^+_v $.  This proves that the natural map in the proposition and $\varphi^*$ are surjective. 
It remains to prove that they are injective as well.  The Chern character has by definition 
values in $H^{2*}_{\rm Chern}({}_KS(G,X)^{\rm min}_{\bar{\mathbb{Q}}_p},\QQ_\ell (*))$ and the whole diagram commutes. This proves that $\varphi^*$ is injective. It follows that  the  homomorphisms ${\rm ch}_\ell, \ \varphi^*$ and the natural map are isomorphisms. 
Finally, the image of $\varphi^*$ being equal to the image of $\varphi^*_{\mathcal{L}}$, the latter does not depend on $\mathcal{L}$. This finishes the proof. 
\end{proof}

\begin{lem} \label{lem:flat}
The classes
\ga{}{
c_{n, \ell}([\CE]_p){}_{K_r} \in H^{2n}\left({}_{K_r}S(G,X)^{\rm min}_{\bar{\QQ}_p}, \QQ_\ell(n)\right)_v^{G_{E_w}} \notag}
 constructed in Theorem~\ref{chernp} vanish for all $n>0$ when $\mathcal{E}$ is flat, that is when $\mathcal{E}$ comes from a $G$-representation.
\end{lem}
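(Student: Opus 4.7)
The plan is to observe that when $\mathcal{E}$ arises from a $G$-representation, the vector bundle $[\mathcal{E}]_p$ is already trivial on the perfectoid space $\overline{{}_{K^p}\mathcal{S}(G,X)}^{\rm min}$; the lemma then follows from the vanishing of Chern classes of trivial bundles together with the descent built into the construction of Theorem~\ref{chernp}.

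Suppose $\mathcal{E}$ arises from a representation $V$ of $G$ of rank $r$ via the factorization $\Rep(G) \to \Rep(K_h) \to \Vect_G(\hat X)$. As a $G$-equivariant vector bundle on $\hat X = G/P_h$, $\mathcal{E}$ is then canonically isomorphic to the trivial bundle $\hat X \times V$ equipped with the diagonal $G$-action, via the map $G \times^{P_h} V \to \hat X \times V$, $(g, v) \mapsto (gP_h, g v)$. Consequently, as already recorded in Notation~\ref{nota:chl}, the tensor functor $\mathcal{E} \mapsto [\mathcal{E}]_p$ produces the trivial bundle of rank $r$ on $\overline{{}_{K^p}\mathcal{S}(G,X)}^{\rm min}$. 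The $\ell$-adic Chern classes of a trivial bundle vanish in positive degree, so
\[
\bar c_{n,\ell}([\mathcal{E}]_p) \;=\; 0 \;\in\; \varprojlim_m H^{2n}\bigl(\overline{{}_{K^p}\mathcal{S}(G,X)}^{\rm min},\,\mathbb{Z}/\ell^m(n)\bigr) \quad \text{for all } n > 0.
\]

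The classes $c_{n,\ell}([\mathcal{E}]_p)_{K_r}$ were constructed in the proof of Theorem~\ref{chernp} precisely as descents of $\bar c_{n,\ell}([\mathcal{E}]_p)$ to finite-level \'etale cohomology of ${}_{K_r}S(G,X)^{\rm min}_{\bar{\mathbb{Q}}_p}$ (with a further extension of scalars from $\mathbb{Z}_\ell$ to $\mathbb{Q}_\ell$ at level $K = K_0$). Since the pre-descent classes vanish, their images under this descent procedure vanish as well, giving $c_{n,\ell}([\mathcal{E}]_p)_{K_r} = 0$ for every $n > 0$. There is essentially no obstacle: the entire content of the lemma is absorbed into the fact that $G$-representations give rise to trivial equivariant bundles on $\hat X = G/P_h$, combined with the functoriality of the perfectoid construction $\mathcal{E} \mapsto [\mathcal{E}]_p$ and the descent argument already carried out in Theorem~\ref{chernp}.
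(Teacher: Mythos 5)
Your proposal is correct and is essentially the paper's own argument: the paper's proof likewise just notes that $[\CE]_p$ is trivial when $\CE$ comes from a $G$-representation (as recorded in Notation~\ref{nota:chl}) and concludes the vanishing of the descended classes from the construction in Theorem~\ref{chernp}. Your added explanation of why $G\times^{P_h}V \to \hat X\times V$ trivializes the equivariant bundle merely spells out what the paper leaves implicit.
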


\begin{proof}
Under the assumption of the lemma,  $[\mathcal{E}]_p$ is trivial (see Notation~\ref{nota:chl}). 
The construction of Theorem~\ref{chernp} thus implies the lemma. 
\end{proof}

\begin{rmks}\label{abeliantype} {\rm (i) If we ignore the action of the Galois group, the Chern classes constructed above depend only on the connected components of ${}_KS(G,X)^{\rm min}$.  Thus the results above extend to general Shimura varieties of abelian type.  We leave the precise formulation to the reader.

(ii)  We naturally expect that Proposition~\ref{Chern}  remains true without the superscripts $^+$, which in any case change nothing when $G$ satisfies condition (iv) of Theorem \ref{gorpar}.  In the excluded case, it is easy to see that the classes denoted $e_r$ in \S \ref{evenquad} have non-trivial images in $IH^{n}({}_KS(G,X)^{\rm min}_{\bar{\mathbb{Q}}_p},\QQ_\ell(*))$; indeed, this follows from the results of \cite{GP02} and the equality $e_r^2 = c_{1,r}^{n}$ for each $r$.  However, we also have
$$(c_{1,r}^{k-1})^2 = c_{1,r}^n,$$
so it is possible that $c_{1,r}$ and $\pm e_r$ have the same image in $IH^{n}$.  We have not attempted to prove that this is not the case.
}
\end{rmks}

\section{Comparison of torsors}\label{proofofcomparison}

Let $(G,X) = (\GSp(2g),X_{2g})$, write $\hX_{2g}$ for the compact dual of $X_{2g}$, fix a base point $h \in X_{2g} \subset \hX_{2g}$, and let $\Omega_h$ denote the fiber at $h$ of the cotangent bundle to $\hX_{2g}$.   
Let $V_g$ denote the vector group $\mathbb{G}_a^g$.    Let $K_{h,g} \subset G$ denote the stabilizer of $h$.
Then $K_{h,g}$ can be identified with $\GL(g) \times \mathbb{G}_m = {\rm Aut}(V_g)\times \mathbb{G}_m$ in such a way that the restriction to $\GL(g)$ of the isotropy action of $K_{h,g}$ on $\Omega_h$ is equivalent to ${\rm Sym}^2(V_g)$. 
 We take $\St$ to be the standard representation $K_{h,g} \mapsto {\rm Aut}(V_g)$, and let $\CE_{\St}$ be the corresponding equivariant vector bundle on $\hX_{2g}$.  
 Fix $K_g = K_{g,p}\cdot K^p_g$ a neat open compact subgroup of $\GSp(2g)(\af)$, and fix a toroidal datum $\Sigma$ for $K$.  As in \S 2, we write  $$\mathfrak{q}_g:  _{K^p_g}\mathcal{A}_g^{\rm tor} \ra _{K^p_g}\mathcal{A}_g^{\rm min}$$
 for the morphism denoted $q$ above.  
\begin{prop}[\cite{PS16}]  There is a canonical isomorphism
$$\theta_g: \mathfrak{q}_g^*[\CE_{\St}]_{p} \isoarrow [\CE_{\St}]_{dR,\Sigma}$$
of vector bundles over  $_{K^p_g}\mathcal{A}_g^{\rm tor}$.
\end{prop}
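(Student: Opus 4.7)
The plan is to reduce the proposition to the comparison on the open part (Proposition~\ref{compar}) plus an extension principle across the toroidal boundary, the key input being a modular description of both bundles as the Hodge bundle of the universal semi-abelian scheme. Over the open locus ${}_{K^p_g}\CS(\GSp(2g),X_{2g}) \subset {}_{K^p_g}\mathcal{A}_g^{\rm tor}$, specializing Proposition~\ref{compar} to the standard representation $\St$ already produces a canonical isomorphism between $\mathfrak{q}_g^*[\CE_{\St}]_p$ and $[\CE_{\St}]_{dR,\Sigma}$, so the task reduces to extending this isomorphism across the boundary. Since both sheaves are locally free on a perfectoid space and the open part is schematically dense, any such extension is unique; the issue is its existence.

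Next, I would identify each side with the Hodge bundle of the universal semi-abelian scheme $\mathcal{G} \to {}_{K_g}\mathcal{A}_g^{\rm tor}$. On the de Rham side, Mumford's construction, as refined by Faltings--Chai, identifies $[\CE_{\St}]^{\rm can}$ canonically with $\omega_{\mathcal{G}} := e^*\Omega^1_{\mathcal{G}/{}_{K_g}\mathcal{A}_g^{\rm tor}}$, so $[\CE_{\St}]_{dR,\Sigma}$ is the pullback of this bundle along the perfectoid tower. On the Hodge--Tate side, the target of $\pi_{HT}$ is the Lagrangian Grassmannian $\hat{\mathcal{X}}_{2g}$, and $\CE_{\St}$ corresponds there to the tautological (Lagrangian) subbundle; Scholze's construction identifies $\pi_{HT}^*\CE_{\St}$ with the $\omega$-piece of the Hodge--Tate decomposition of the $p$-adic Tate module of the universal $p$-divisible group on the open part of the Siegel space.

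The heart of the argument, and the main obstacle, is to match these two descriptions across the boundary of ${}_{K^p_g}\mathcal{A}_g^{\rm tor}$, where the universal abelian scheme degenerates to a semi-abelian one. Following Pilloni--Stroh, one works with the integral models ${}_{K_{g,r}}\mathcal{A}_{g,\mathcal{O}_C}^{\rm tor}$ and their inverse limit: on the perfectoid toroidal compactification one obtains a canonical global trivialization of the Tate module $T_p\mathcal{G}$ in the limit (using that the connected--\'etale sequence of $\mathcal{G}[p^\infty]$ behaves well under toroidal degeneration), and then Faltings's Hodge--Tate theorem for the $p$-divisible group of the universal semi-abelian scheme produces a canonical quotient map onto $\omega_{\mathcal{G}}^{\rm can}$, supplying the required boundary extension of $\theta_g$. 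The delicate point is the compatibility of this toroidal Hodge--Tate quotient with Scholze's $\pi_{HT}$ on the minimal side: it reflects the fact that $\pi_{HT}$ on $\overline{{}_{K^p_g}\CS(\GSp(2g),X_{2g})}^{\min}$ is, locally near the boundary, computed from the same $p$-divisible group data that define $\omega_{\mathcal{G}}^{\rm can}$ on the toroidal side. Verifying this requires unpacking the explicit construction of $\pi_{HT}$ as a continuous extension of the open-part period map rather than appealing to a purely formal continuity argument, and this is precisely the content of Corollaire~1.6 of \cite{PS16}.
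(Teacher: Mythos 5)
Your proposal is correct and follows essentially the same route as the paper: identify $[\CE_{\St}]_{dR,\Sigma}$ with (the pullback of) the sheaf of invariant relative $1$-forms of the universal semi-abelian scheme over the toroidal compactification (the paper cites Lan for this, you cite Mumford/Faltings--Chai), and then reduce the comparison with $\mathfrak{q}_g^*[\CE_{\St}]_p$ to the Pilloni--Stroh result on the perfectoid toroidal Siegel space (their Corollaire~1.16, whose bundle $\omega^{\rm mod}_A$ has generic fiber exactly $[\CE_{\St}]_{dR,\Sigma}$). The extra scaffolding you add (uniqueness of the boundary extension by density, the sketch of the Tate-module trivialization and Faltings's Hodge--Tate theory) is consistent with, but not needed beyond, this citation.
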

\begin{proof}  This is essentially equivalent to Corollaire~1.16  of \cite{PS16}.  More precisely, the generic fiber of the bundle denoted $\omega^{\rm mod}_A$ in \cite{PS16} is exactly $[\CE_{\St}]_{dR,\Sigma}$.  Indeed, with our notation, the restriction of  $[\CE_{\St}]^{\rm can}$ to the open Siegel modular variety in (neat) finite level $K$ is the sheaf of relative sections of  the relative $1$-forms of the universal polarized abelian scheme, and its canonical extension to the toroidal compactification is isomorphic to the sheaf of relative sections of the  relative invariant $1$-forms on the corresponding semi-abelian scheme; see \cite[Proposition 6.9]{Lan12} or \cite[(1.3.3.15)]{Lan17}; the comparison between the algebraic and analytic constructions is Theorem 5.2.12 of \cite{Lan12a}.  
\end{proof}

We  now restrict  the information  to Shimura varieties of Hodge type.
Let $(G,X)$ be a Shimura datum of Hodge type, with compact dual $\hX$.  
We fix a symplectic embedding 
\ga{}{\iota:  (G,X) \ra (\GSp(2g),X_{2g}).\notag}
Let $h \in X$ be a base point, let $K_h \subset G$ be its stabilizer,  let $K_{h,g} \subset \GSp(2g)$ be the stabilizer of $\iota(h)$, and denote by $\iota_h$ the inclusion of $K_h$ in 
$K_{h,g}$.    
Let $\St_h = \St\circ \iota_h:  K_h \ra {\rm Aut}(V_g)$, with $\St$ the standard faithful representation of $K_{h,g}$.
Finally, let 
$\CE_{\St_h}$ be the  equivariant vector bundle on $\hX$ with isotropy representation $\St_h$.   Let $K = K_p\cdot K^p$ be a neat open compact subgroup of $G(\af)$.

\begin{cor}\label{1forms}   We fix a natural number $s$.
The morphism $\theta_g$ induces  by pullback for any $s$ a canonical isomorphism  
$$\theta(s):   Q^*[\CE_{\St_h}]_{p} \isoarrow [\CE_{\St_h}]_{dR,\Sigma, K_s}$$
of vector bundles over $Z_s$.

\end{cor}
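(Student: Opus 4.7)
The plan is to transfer $\theta_g$ to the Hodge-type setting by naturality of both sides of the comparison. The symplectic embedding $\iota$ and the compatible choices of level and toroidal combinatorial data give rise to a commutative square of morphisms of $C$-adic perfectoid spaces
\ga{}{
\vcenter{\vbox{\xymatrix{
Z_s \ar[r] \ar[d]_{\phi} & {}_{K^p_g}\mathcal{A}_g^{\rm tor} \ar[d]^{\mathfrak{q}_g} \\
{}_{K_s}\CS(G,X)_\Sigma \ar[r] & {}_{K_{g,s}}\CA_{g,\Sigma_g}^{\rm tor}
}}} \notag
}
obtained by composing the defining projection $Z_s \to \overline{{}_{K^p}\CS(G,X)_\Sigma}$ with the first projection in the fibre product \eqref{bar2}, together with the analogous map at finite level. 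Simultaneously, the closed embedding $\hX \hookrightarrow \hX_{2g}$ identifies the restriction of $\CE_{\St}$ to $\hX$ with $\CE_{\St_h}$, since by definition $\St_h = \St \circ \iota_h$.

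The next step is to identify what pullback of $\theta_g$ along this diagram computes on each side. On the Hodge–Tate side, by Theorem~\ref{perfec2} combined with Theorem~\ref{thmCS}, the two routes from $\overline{{}_{K^p}\CS(G,X)_\Sigma}$ to $\hat{\mathcal{X}}_{2g}$ - namely through ${}_{K^p_g}\mathcal{A}_g^{\rm min}$ and through $\pi_{HT}: \overline{{}_{K^p}\CS(G,X)}^{\rm min} \to \hat{\mathcal{X}}$ - coincide; applied to $\CE_{\St}$ this yields precisely $Q^*[\CE_{\St_h}]_p$ on $Z_s$. On the de Rham side, functoriality of Mumford's canonical extension (see \cite[Lemma~4.2.4]{Har90} and the adelic construction in Section~4 of \cite{Har89}) shows that the pullback of $[\CE_{\St}]^{\rm can}$ along the composite ${}_{K_s}\CS(G,X)_\Sigma \to {}_{K_{g,s}}\CA_{g,\Sigma_g}^{\rm tor}$ is $[\CE_{\St_h}]^{\rm can}$, so after further pullback by $\phi$ one obtains $[\CE_{\St_h}]_{dR,\Sigma,K_s}$. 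Define $\theta(s)$ as the pullback of $\theta_g$ along the upper horizontal map of the square; canonicity of $\theta_g$ from Pilloni--Stroh and of the identifications above give canonicity and compatibility of $\theta(s)$ under change of $s$ and of tame level $K^p$.

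The main obstacle I expect lies in the de Rham compatibility: one must verify that the Mumford canonical extension commutes with pullback along $\iota$ at toroidal level in the category of $C$-adic spaces, not merely over the number field. Concretely, since $\overline{{}_{K^p}\CS(G,X)_\Sigma}$ is defined via the fibre product \eqref{bar2} and not directly as a toroidal compactification of a Hodge-type Shimura variety, one has to check that the bundle obtained by pulling back $[\CE_{\St}]^{\rm can}$ from ${}_{K_{g,s}}\CA_{g,\Sigma_g}^{\rm tor}$ agrees, after further pullback by $\phi$ through $Z_s$, with $\phi^*[\CE_{\St_h}]^{\rm can}$ as defined via the integral/analytic construction of the canonical extension on ${}_{K_s}\CS(G,X)_\Sigma$ itself. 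The cited functoriality of Mumford extensions plus the analytic–algebraic comparison of \cite{Lan12a} should suffice; this is the step where one should be most careful.
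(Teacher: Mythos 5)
Your proposal is correct and follows essentially the same route as the paper, which treats the corollary as immediate: pull $\theta_g$ back along $Z_s \to \overline{{}_{K^p}\CS(G,X)_\Sigma} \to {}_{K^p_g}\mathcal{A}_g^{\rm tor}$, identify the Hodge--Tate side via the factorization of $\pi_{HT}$ through $\hat{\mathcal{X}} \subset \hat{\mathcal{X}}_{2g}$ (Theorems~\ref{perfec2} and \ref{thmCS}) together with $\CE_{\St}|_{\hat X} = \CE_{\St_h}$, and the de Rham side via the functoriality of canonical extensions along $[\iota]$ for compatible $\Sigma$, $\Sigma_g$ as in \eqref{bar}. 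Only a cosmetic point: the right vertical arrow of your square should be the finite-level projection ${}_{K^p_g}\mathcal{A}_g^{\rm tor} \to {}_{K_{g,s}}\CA_{g,\Sigma_g}^{\rm tor}$ of the perfectoid tower rather than $\mathfrak{q}_g$, which in the paper denotes the map to ${}_{K^p_g}\mathcal{A}_g^{\rm min}$.
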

\begin{proof}[Proof of 
Theorem~\ref{compar3}]  We fix $s$ as in Section~\ref{sec:covers}.
As $\St_h$ is faithful,  any  irreducible representation $V$ of $K_h$ defined over $\bar{\mathbb{Q}}$ is a direct factor of the  representation $\St_h^{ \otimes m}\otimes \St_h^{ \vee \otimes n} $ for some pair of natural numbers $(m,n)$ \cite[I, Proposition~3.1\,(a), II, Proposition~2.23]{DMOS82}.  Write  
$$V \xrightarrow{\sigma} \St_h^{ \otimes m}\otimes \St_h^{ \vee \otimes n}   \xrightarrow{\tau} V$$ for the splitting. 
The isomorphism of vector bundles  $\theta(s)$  induces an isomorphism of vector bundles 
$$\theta( s)^{\otimes m}\otimes \theta(s)^{\vee \otimes n}:   \mathfrak{q}_g^*\left([\CE_{\St_h}]^{\otimes m}_{p} \otimes 
[\CE_{\St_h}]^{ \vee \otimes n}_{p}\right)
 \isoarrow  
[\CE_{\St_h}]_{dR,\Sigma,K_s}^{\otimes m} \otimes [\CE_{\St_h}]_{dR,\Sigma,K_s}^{\vee \otimes n}    
$$
 while $\sigma$ and $\tau$ induce the splittings of vector bundles over  ${}_{K^p}\sS(G,X)^{\rm min} $
   and over   $Z_s$ 
\ga{}{  [\mathcal{V}]_p \xrightarrow{\sigma_p} [\CE_{\St_h}]^{\otimes m}_{p} \otimes 
[\CE_{\St_h}]^{ \vee \otimes n}_{p} \xrightarrow{\tau_p} [\mathcal{V}]_p \notag\\
[\mathcal{V}]_{dR, \Sigma,K_s} \xrightarrow{\sigma_{dR, \Sigma,r}} [\CE_{\St_h}]^{ \otimes m}_{dR, \Sigma,K_s} \otimes 
[\CE_{\St_h}]^{  \vee \otimes n}_{p} \xrightarrow{\tau_{dR,\Sigma,K_s}} [\mathcal{V}]_{dR,\Sigma,K_s}. \notag }
This yields the following morphisms of coherent sheaves   over $Z_s$ 
\ga{}{\alpha:= \tau_{dR,\Sigma,K_s} \circ \theta(s)^{\otimes m}\otimes \theta(s)^{\vee \otimes n} \circ  Q^*\sigma_p: 
 Q^*[\mathcal{V}]_p\to [\mathcal{V}]_{dR,\Sigma,K_s} \notag\\
\beta:=   Q^*\tau_p\circ (\theta(s)^{\otimes m}\otimes \theta(s)^{\vee \otimes n})^{-1} \circ  \sigma_{dR, \Sigma,K_s}:
[\mathcal{V}]_{dR,\Sigma,s} \to   Q^*[\mathcal{V}]_p \notag
}
which pull back to inverse isomorphisms  on $Z^o_s$ under $\mathfrak{j}_s$ (see \eqref{open}).
Thus 
$$
\beta\circ \alpha -{\rm Id}_{ Q^*[\mathcal{V}]_p }:   Q^*[\mathcal{V}]_p \to  Q^*[\mathcal{V}]_p, \quad \alpha \circ \beta - {\rm Id}_{  [\mathcal{V}]_{dR,\Sigma, K_s} }:  [\mathcal{V}]_{dR,\Sigma,K_s}
 \to  [\mathcal{V}]_{dR,\Sigma,K_s}
$$
are homomorphisms of coherent sheaves  which pull back to zero  on $Z^o_s$. As $ Q^*[\mathcal{V}]_p$ and  $[\mathcal{V}]_{dR,\Sigma,K_s}$ are vector bundles, thus are both torsion free, we conclude that both maps are $0$.  

This finishes the  proof of Theorem \ref{compar3}.  
As for Theorem~\ref{compar2}, the proof is identical, except that we do not have to fix $s$ in the beginning and we replace $Q$ by $q$ everywhere.
\end{proof}

\appendix{A}{Geometry} \label{sec:app}
\numberwithininappendix

We collect here the statements of some standard results in the theory of schemes, in versions adapted to perfectoid spaces.   The proofs are due to Peter Scholze.

\begin{prop}[Scholze] \label{p1bundle}  Let $\CW$ be a vector bundle of rank $r$ over a perfectoid space $\CX$over $C$.   Let $\PP(\CW)$ denote the corresponding projective bundle over $\CX$, viewed as an adic space over $C$ \emph{(}see Remark \ref{sheafy} below\emph{)}. 
Then
$\mathcal{O}_{\PP(\CW)}(1)$  has a first  Chern class $$(z_m) \in \varprojlim_m H^2\left(\PP(\CW),\mathbb{Z}/\ell^m (1)\right)$$ such that the homomorphism 
\ga{}{\varprojlim_m H^b(\PP(\CW),\mathbb{Z}/\ell^m(a)) \leftarrow \oplus_{i=0}^{r-1} \varprojlim_m H^{b-2i}(\CX, \mathbb{Z}/\ell^m (a-i))\cdot (z_n)^i \notag} 
is an isomorphism. 
\end{prop}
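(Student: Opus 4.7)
The plan is to construct the classes $z_m$ via the Kummer sequence, establish the projective bundle formula at each finite coefficient level $\ZZ/\ell^m$ by analyzing $R\pi_*$ for $\pi\colon \PP(\CW) \to \CX$, and then pass to the inverse limit. Since $\ell$ is invertible on $C$, the Kummer sequence $1 \to \mu_{\ell^m} \to \GG_m \xrightarrow{(\cdot)^{\ell^m}} \GG_m \to 1$ is exact on the \'etale site of the adic space $\PP(\CW)$, and the image of $[\CO(1)] \in H^1(\PP(\CW),\GG_m)$ under the boundary gives the compatible system $z_m \in H^2(\PP(\CW),\ZZ/\ell^m(1))$.

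For fixed $m$, I would compare the two complexes $K := \bigoplus_{i=0}^{r-1} \ZZ/\ell^m(a-i)[-2i]$ and $R\pi_*\ZZ/\ell^m(a)$ of \'etale sheaves on $\CX$. Pullback followed by cup product with $z_m^i$ defines a natural map $K \to R\pi_*\ZZ/\ell^m(a)$ in the derived category of $\ZZ/\ell^m$-sheaves. To check that it is a quasi-isomorphism, it suffices by proper base change (Huber for adic spaces, and Scholze's framework for perfectoid bases) to verify on each geometric fiber, where it reduces to the classical statement that the $z_m^i$ generate $H^{2i}(\PP^{r-1},\ZZ/\ell^m(i))$ as $i$ varies from $0$ to $r-1$. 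Applying $R\Gamma(\CX,-)$ and taking cohomology in degree $b$ then gives the isomorphism
\[
\bigoplus_{i=0}^{r-1} H^{b-2i}(\CX,\ZZ/\ell^m(a-i)) \xrightarrow{\sim} H^b(\PP(\CW),\ZZ/\ell^m(a)), \qquad \alpha_i \mapsto \pi^*\alpha_i \cup z_m^i.
\]
Since the direct sum is finite, it commutes with $\varprojlim_m$, and the inverse-limit statement of the proposition follows by taking inverse limits of the finite-level isomorphisms (no Mittag--Leffler issue arises because we already have an iso before passing to the limit).

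The main obstacle is ensuring that proper base change, together with the computation of $H^*(\PP^{r-1}_C,\ZZ/\ell^m)$, are available in the relative setting of a projective bundle over a perfectoid base. For $\CX$ of finite type over $C$ both ingredients are supplied by Huber's theory; for general perfectoid $\CX$, one combines Huber's results with Scholze's theory of \'etale cohomology on perfectoid spaces, using that $\PP(\CW)$ is locally on $\CX$ isomorphic to $\CX \times_C \PP^{r-1}_C$. If direct relative proper base change over a perfectoid base is not cleanly available, one can instead reduce to the trivial-bundle case by Mayer--Vietoris on a trivializing cover of $\CW$, treat $\CX \times_C \PP^{r-1}_C$ by a K\"unneth argument, and compute $H^*(\PP^{r-1}_C,\ZZ/\ell^m)$ via GAGA with algebraic \'etale cohomology.
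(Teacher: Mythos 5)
Your proposal is correct and follows essentially the same route as the paper: the classes $z_m$ are produced from $[\mathcal{O}_{\PP(\CW)}(1)]$ via the Kummer sequence, the statement is reduced to the finite-level isomorphism with $\ZZ/\ell^m$-coefficients, and that isomorphism is checked fiberwise over geometric points $\mathrm{Spa}(C(\bar x),C(\bar x)^+)$ — the paper invokes \cite[Lemma~4.4.1]{CS17} for exactly the base-change-at-geometric-points step that you phrase as proper base change — after which one concludes by the standard computation for $\PP^{r-1}$ and passes to the (finite-sum) inverse limit.
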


\begin{rmk}\label{sheafy}  {\rm One of the referees asked how $\PP(\CW)$ can be defined as an adic space.   We forwarded the question to Laurent Fargues and David Hansen, who sent essentially identical replies.  Here is an (almost literal) paraphrase of Hansen's argument.  First, without loss of generality, we may localize on $\CX$ and assume that $\CX$ is affinoid perfectoid, say $\mathrm{Spa}(R,R^+)$, and that $\CW$ is the trivial bundle.   Thus we need to define $\mathrm{Spa}(R,R^+)\times \PP^{r-1}$ as an adic space.  Next, by the usual covering of $\PP^{r-1}$ by $r$ affinoid balls, we are reduced to showing that the ring $R\langle T_1, \dots, T_{r-1} \rangle$ (power series with coefficients in $R$ that tend to $0$) is {\it sheafy} \cite[Definition 3.1.11]{SW17}.
 In fact, $R\langle T_1, \dots, T_{r-1} \rangle$ is {\it sousperfectoid}, in the terminology of Hansen and Kedlaya, and in particular is sheafy; see \S 6.3 of \cite{SW17} for the proofs.  Moreover, $R\langle T_1, \dots, T_{r-1} \rangle$ is {\it analytic} \cite[Definition 4.4.2]{SW17}.

We are warned, however, that there is as yet no reference for the \'etale cohomology in the generality of sousperfectoid spaces.  We have seen that $\PP(\CW)$ is an analytic adic space; thus, in order to define the \'etale cohomology spaces used in Proposition \ref{p1bundle}, one can pass to the associate locally spatial diamond (see \cite[\S 15]{Sch17}), and  apply the cohomological formalism developed in \cite{Sch17}.}
\end{rmk}

We define the Chern class $c_i(\CW)$ by Grothendieck's  standard equation
$$
\varprojlim_m H^{2r}( \PP(\CW),\mathbb{Z}/\ell^m(r))  \ni \sum_{i=0}^r (-1)^i c_i(\CW) \cdot (z_m)^{r-i}=0, \quad (c_i(\CW)_m) \in \varprojlim_m H^{2i}(\PP(\CW),\mathbb{Z}/\ell^m(i))  .
$$

\begin{proof}
One defines $z$ as usual, as the projective limit over $m$ of the images $z_m\in H^2(\PP(\CW),\mathbb{Z}/\ell^m (1))$   of  the class of $ \mathcal{O}_{\PP(\CW)}(1)$ in $H^1( \PP(\CW), \mathbb{G}_m)$  via the connecting homomorphism of the \'etale Kummer exact sequence  $1\to \mu_{\ell^m}\to 
\mathbb{G}_m \xrightarrow{\ell^m} \mathbb{G}_m\to 1$. To prove the statement, it is enough to prove that  the map
\ga{}{H^b(\PP(\CW),\mathbb{Z}/\ell^m(a)) \leftarrow \oplus_{i=0}^{n-1} H^{b-2i}(\CX, \mathbb{Z}/\ell^m (a-i))\cdot z_m^i \notag}
is an isomorphism. This is  a local property on $\CX$, reduced by \cite[Lemma~4.4.1]{CS17} 
to the computation of the \'etale cohomology of $\PP(\mathcal{W)}$ over a geometric point 
$\bar x={\rm \mathrm{Spa}}(C(\bar x), C(\bar x)^{+})$, which then is the standard computation. 
\end{proof}

\begin{rmk}
{\rm We avoid here the delicate question whether the surjection
$$ H^{j}( \mathcal{X},\mathbb{Z}_\ell(i)) \to \varprojlim_m H^{j}( \mathcal{X},\mathbb{Z}/\ell^m(i))$$ 
is an isomorphism as it is irrelevant for our purpose.  }
\end{rmk}

Recall that we have level subgroups $K_{p,r} \subset G(\Qp)$. We denote by $\Sigma^H$ the invariants under a group $H$ acting on the set $\Sigma$.  We  use this notation for cohomology invariants.

\begin{prop}[Scholze]\label{etaleinvariant} 
The morphisms of ringed spaces 
$$_{K^p}\overline{\CS(G,X)}^{\rm min} \rightarrow  _{K^p \cdot K_{p,r}} \overline{\mathcal{S}(G,X)}^{\rm min} $$
 induce for all pairs of integers $(i,j)$  and all $r\ge 1$ homomorphisms
\ga{}{ 
H^j\left(_{K^p}\overline{\CS(G,X)}^{\rm min},\mathbb{Z}/\ell^m (i)\right)^{K_{p,r}}  \to   H^j\left(_{K^p\cdot K_{p,r}} S (G,X)^{\rm min}_{\overline{\mathbb{Q}}_p}, \mathbb{Z}/\ell^m(i)\right) \notag}
 and 
$$
\left[\varprojlim_m H^j\left({}_{K^p}\overline{\CS(G,X)}^{\rm min},\mathbb{Z}/\ell^m (i)\right)\right]^{K_{p,r}} =
  \varprojlim_m \left[H^j\left({}_{K^p}\overline{\CS(G,X)}^{\rm min},\mathbb{Z}/\ell^m (i)\right)\right]^{K_{p,r}} \!\!\!  \to   H^j\left({}_{K^p\cdot K_{p,r}} 
 S(G,X)^{\rm min}_{\overline{\mathbb{Q}}_p}, \mathbb{Z}_\ell(i)\right). 
$$
\end{prop}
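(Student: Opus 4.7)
The strategy is to combine Scholze's theorem that $\ell$-adic torsion cohomology commutes with tilde-limits, together with a Hochschild--Serre argument at finite level, and then the normalisation map ${}_{K_r} S(G,X)^{\rm min} \to \overline{{}_{K_r}\CS(G,X)}^{\rm min}$ to land in the cohomology of the actual minimal compactification. First, since $_{K^p}\overline{\CS(G,X)}^{\rm min}\sim \varprojlim_s \overline{_{K_s}\CS(G,X)}^{\rm min}$ in the sense of \cite[Definition~2.4.1]{SW13}, the formula \eqref{coh} (applied to the closures rather than to the toroidal levels, which is exactly \cite[Corollary~7.18]{Sch12}) gives
\begin{equation*}
H^j\left(_{K^p}\overline{\CS(G,X)}^{\rm min},\mathbb{Z}/\ell^m(i)\right)
~=~\varinjlim_{s\ge r} H^j\left(\overline{{}_{K_s}\CS(G,X)}^{\rm min},\mathbb{Z}/\ell^m(i)\right),
\end{equation*}
and the $K_{p,r}$-action on the left corresponds to the compatible actions of the finite quotients $K_{p,r}/K_{p,s}$ on each term of the right-hand filtered colimit.

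Next, for each $s\ge r$, the map $\overline{{}_{K_s}\CS(G,X)}^{\rm min}\to \overline{{}_{K_r}\CS(G,X)}^{\rm min}$ is a finite morphism which is étale Galois with group $K_{p,r}/K_{p,s}$ over the interior; using the neatness hypothesis and taking $r$ large enough, this quotient is a finite $p$-group. Since $\ell\neq p$, the Hochschild--Serre spectral sequence
\begin{equation*}
E_2^{a,b}=H^a\left(K_{p,r}/K_{p,s},H^b\left(\overline{{}_{K_s}\CS(G,X)}^{\rm min},\mathbb{Z}/\ell^m(i)\right)\right) \Rightarrow H^{a+b}\left(\overline{{}_{K_r}\CS(G,X)}^{\rm min},\mathbb{Z}/\ell^m(i)\right)
\end{equation*}
degenerates at $E_2$, and the edge map
\begin{equation*}
H^j\left(\overline{{}_{K_r}\CS(G,X)}^{\rm min},\mathbb{Z}/\ell^m(i)\right)\xrightarrow{\sim} H^j\left(\overline{{}_{K_s}\CS(G,X)}^{\rm min},\mathbb{Z}/\ell^m(i)\right)^{K_{p,r}/K_{p,s}}
\end{equation*}
is an isomorphism. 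Passing to the filtered colimit over $s$, and using that formation of invariants under a profinite group commutes with filtered colimits whose action factors through finite quotients, one obtains an isomorphism
\begin{equation*}
H^j\left(\overline{{}_{K_r}\CS(G,X)}^{\rm min},\mathbb{Z}/\ell^m(i)\right)\xrightarrow{\sim} H^j\left(_{K^p}\overline{\CS(G,X)}^{\rm min},\mathbb{Z}/\ell^m(i)\right)^{K_{p,r}}.
\end{equation*}
Inverting this and composing with the pullback along the normalisation morphism $_{K_r}S(G,X)^{\rm min}\to\overline{{}_{K_r}\CS(G,X)}^{\rm min}$ (together with the comparison isomorphism between étale cohomology of the algebraic variety over $\bar{\mathbb Q}_p$ and of its associated adic space) produces the desired homomorphism of the first displayed formula.

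For the second displayed formula, the equality on the left is the exchange of the inverse limit over $m$ with invariants under $K_{p,r}$; this is valid because each $K_{p,r}/K_{p,s}$ is finite, so the $K_{p,r}$-action on each individual term $H^j(_{K^p}\overline{\CS(G,X)}^{\rm min},\mathbb{Z}/\ell^m(i))$ is locally finite, and formation of invariants is then a left exact functor commuting with the Mittag--Leffler projective system in $m$. The arrow to $H^j(_{K^p\cdot K_{p,r}}S(G,X)^{\rm min}_{\bar{\mathbb Q}_p},\mathbb Z_\ell(i))$ is obtained by applying the first part of the proposition at each level $m$ and passing to the limit.

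The main obstacle in this plan is the commutation of étale cohomology with the tilde-limit and, more subtly, the identification of the $K_{p,r}$-action on the perfectoid cohomology with the system of $K_{p,r}/K_{p,s}$-actions at finite levels; both of these, however, are supplied directly by the results of Scholze recalled in the paper. A secondary point is ensuring that the normalisation map $_{K_r}S(G,X)^{\rm min}\to \overline{{}_{K_r}\CS(G,X)}^{\rm min}$ indeed factors the relevant construction so that the compositions behave functorially in $K^p$ and in $r$, which follows from the uniqueness in the construction of the scheme-theoretic image recalled before Theorem~\ref{perfec2}.
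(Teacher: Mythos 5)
Your overall skeleton (tilde-limit plus \cite[Corollary~7.18]{Sch12}, then descent to finite level, then pullback to ${}_{K_r}S(G,X)^{\rm min}$ and Huber's comparison, then exchange of $\varprojlim_m$ with invariants) matches the paper's, but the central step is not justified and is in fact the precise difficulty the paper's proof is designed to circumvent. You assert that $\overline{{}_{K_s}\CS(G,X)}^{\rm min}\ra \overline{{}_{K_r}\CS(G,X)}^{\rm min}$ is Galois with group $K_{p,r}/K_{p,s}$ and deduce, via Hochschild--Serre, an isomorphism $H^j\bigl(\overline{{}_{K_r}\CS(G,X)}^{\rm min},\mathbb{Z}/\ell^m(i)\bigr)\isoarrow H^j\bigl(\overline{{}_{K_s}\CS(G,X)}^{\rm min},\mathbb{Z}/\ell^m(i)\bigr)^{K_{p,r}/K_{p,s}}$ which you then invert. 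Hochschild--Serre does not apply: the map is not \'etale Galois over the compactification (it is ramified along the boundary, as you yourself note by restricting the Galois claim to the interior). The statement one could hope to use instead is the transfer argument ``cohomology of the quotient $=$ prime-to-$\ell$ invariants,'' but that requires knowing that $\overline{{}_{K_r}\CS(G,X)}^{\rm min}$ \emph{is} the quotient $\overline{{}_{K_s}\CS(G,X)}^{\rm min}/[K_{p,r}/K_{p,s}]$. These spaces are scheme-theoretic images inside the Siegel minimal compactification; one only gets a finite morphism from the quotient onto $\overline{{}_{K_r}\CS(G,X)}^{\rm min}$, not an isomorphism (the image need not even be normal), so your asserted isomorphism --- which, note, is strictly stronger than the proposition, which only claims a homomorphism --- has no basis, and without it your plan produces no arrow from the $K_{p,r}$-invariants of the perfectoid cohomology down to finite level.

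The paper's proof handles exactly this point by working with the honest quotients $[{}_{K_{r'}}\overline{\CS(G,X)}^{\rm min}]/[K_{p,r}/K_{p,r'}]$, for which the identity ``cohomology of the quotient $=$ invariants'' does hold because $K_{p,r}/K_{p,r'}$ is a $p$-group and $\ell\neq p$; thus the $K_{p,r}$-invariants of the perfectoid cohomology are the filtered colimit of the cohomologies of these quotient schemes. Each such quotient sits in a chain of finite morphisms ${}_{K_r}\CS(G,X)^{\rm min}\ra [{}_{K_{r'}}\overline{\CS(G,X)}^{\rm min}]/[K_{p,r}/K_{p,r'}]\ra \overline{{}_{K_r}\CS(G,X)}^{\rm min}$ (the left arrow using normality of the genuine minimal compactification), and their inverse limit ${}_{K_r}\overline{\CS(G,X)}^{\rm min,\#}$ is a scheme of finite type whose cohomology computes the colimit above. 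The required homomorphism is then the pullback along the finite map ${}_{K_r}\CS(G,X)^{\rm min}\ra {}_{K_r}\overline{\CS(G,X)}^{\rm min,\#}$, followed by Huber's comparison; note also that the case $r=0$ is only controlled up to bounded finite error (whence $\mathbb{Q}_\ell$-coefficients there), whereas your ``taking $r$ large enough'' does not match the statement, which must hold for every $r\ge 1$. Your treatment of the second displayed formula (exchanging $\varprojlim_m$ with invariants) is fine and agrees with the paper.
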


\begin{proof} As before, we write $K_r = K^p\cdot K_{p,r}$.  By Theorem \ref{perfec2} and \cite[Corollary~7.18]{Sch12}, one has 
$$
H^j \left( _{K^p}\overline{\CS(G,X)}^{\rm min}, \mathbb{Z}/\ell ^m(i)\right)= \varinjlim_r H^j\left(
 {}_{K_r}\overline{\CS(G,X)}^{\rm min}, \mathbb{Z}/\ell ^m(i)\right)  =  \varinjlim_r H^j \left( 
_{K_r}\overline{S(G,X)}^{\rm min}_{\bar{\mathbb{Q}}_p}, \mathbb{Z}/\ell ^m(i)\right). 
$$
The second equality comes from the comparison isomorphism  
 \cite[Theorem~4.2]{Hub98}.

On the other hand, 
for $ r' > r \ge 1$ one has  
$K_r/K_{r'} = K_{p,r}/K_{p,r'}$, which is a $p$-group. 
It follows that the transition maps in the inductive system
 are injective as 
$ {}_{K_{r+1}}\overline{\CS(G,X)}^{\rm min} \to  {}_{K_r}\overline{\CS(G,X)}^{\rm min}$ is finite surjective of degree prime to $\ell$.  
This implies 
\begin{align}
H^j \left({} _{K^p}\overline{\CS(G,X)}^{\rm min}, \mathbb{Z}/\ell ^m(i)\right)^{K_{p,r}}  &=
 \varinjlim_{r'}  H^j \left({}_{K_{r'}}\overline{\CS(G,X)}^{\rm min}, \mathbb{Z}/\ell ^m(i)\right)^{K_{p,r}/K_{p,r'}} \notag \\
 &= \varinjlim_{r'}  H^j \left([_{K_{r'}}\overline{\CS(G,X)}^{\rm min}]/[K_{p,r}/K_{p,r'}], \mathbb{Z}/\ell ^m(i)\right)
\label{limm}
  \end{align}
  if $r \geq 1$, because $K_{p,r}/K_{p,r'}$ is a $p$-group.  Here $ [_{K_{r'}}\overline{\CS(G,X)}^{\rm min}]/[K_{p,r}/K_{p,r'}]$ 
  denotes the quotient of the (projective)  scheme $_{K_{r'}}\overline{\CS(G,X)}^{\rm min}$ by the finite $p$-group $K_{p,r}/K_{p,r'}$.

  If $r = 0$, the last two groups are equal up to finite error that is bounded independently of $r'$
  and $m$; we return
  to this case below.
  
For any $r' \geq r$, the quotient $_{K_{r'}}\overline{\CS(G,X)}^{\rm min}/[K_{p,r}/K_{p,r'}]$ lies in a sequence of finite morphisms
$$    _{K_{r}}\CS(G,X)^{\rm min}         \ra  _{K_{r'}}\overline{\CS(G,X)}^{\rm min}/[K_{p,r}/K_{p,r'}] \ra                        _{K_{r}}\overline{\CS(G,X)}^{\rm min}.$$
Indeed,  the minimal compactification is normal, which explains the left arrow, and the right one is just the image of the middle term in the  minimal compactification of the Siegel space at level $K_r$. 
 Define
$$_{K_{r}}\overline{\CS(G,X)}^{\rm min,\#} = \varprojlim_{r'} {}_{K_{r'}}\overline{\CS(G,X)}^{\rm min}/[K_{p,r}/K_{p,r'}]. $$
This is a scheme of finite type as it is dominated by  $ _{K_{r}}\CS(G,X)^{\rm min}   $. 
 Again we have a sequence of finite morphisms
$$    _{K_{r}}\CS(G,X)^{\rm min}         \ra  _{K_{r}}\overline{\CS(G,X)}^{\rm min,\#}  \ra     _{K_{r}}\overline{\CS(G,X)}^{\rm min}.$$
It follows from the above considerations that, for any $m$, $j$, and $i$,  we have
\begin{equation*}
 H^j \left({}_{K^p}\overline{\CS(G,X)}^{\rm min}, \mathbb{Z}/\ell ^m(i)\right)^{K_{p,r}}  =
  \varinjlim_{r}  H^j \left([_{K_{r}}\overline{\CS(G,X)}^{\rm min,\#}], \mathbb{Z}/\ell ^m(i)\right)
  \end{equation*}
  if $r \geq 1$, and with finite error independent of $m$ if $r = 0$.  
  
 So composing with the natural map 
 $$H^*\left({}_{K_r}\CS(G,X)^{\rm min,\#}, \mathbb{Z}/\ell ^m(i)\right) \ra H^*\left({}_{K_r}\CS(G,X)^{\rm min}, \mathbb{Z}/\ell ^m(i)\right) $$
  one obtains the homomorphism
\begin{equation}
\label{Kpr}
H^j \left({}_{K^p}\overline{\CS(G,X)}^{\rm min}, \mathbb{Z}/\ell ^m(i)\right)^{K_{p,r}} \to 
 H^j \left({}_{K_r}\CS(G,X)^{\rm min}, \mathbb{Z}/\ell ^m(i)\right) = H^j \left({}_{K_r} S(G,X)^{\rm min}_{\overline{\mathbb{Q}}_p}, \mathbb{Z}/\ell ^m(i)\right)
\end{equation}
 (see  again \cite{Hub98}, {\it loc.cit.}, for the comparison isomorphism),  if $r \geq 1$.
 
  Finally, by definition,  one has
\ga{}{\varprojlim_m H^j \left({}_{K^p}\CS(G,X)^{\rm min}, \mathbb{Z}/\ell ^m(i)\right)^{K_{p,r}}=
\left[\varprojlim_m H^j \left({}_{K^p}\CS(G,X)^{\rm min}, \mathbb{Z}/\ell ^m(i)\right)\right]^{K_{p,r}},\notag 
}
which thus maps to 
\ga{}{ \varprojlim_m H^j \left({}_{K_r} S(G,X)^{\rm min}_{\overline{\mathbb{Q}}_p}, \mathbb{Z}/\ell ^m(i)\right) = H^j \left({}_{K_r} S(G,X)^{\rm min}_{\overline{\mathbb{Q}}_p}, \mathbb{Z}_\ell(i)\right). \notag}

\end{proof}

 If $r = 0$ this remains true with $\mathbb{Z}/\ell ^m(i)$ replaced by $\mathbb{Q}_\ell(i)$:   the maps in \eqref{limm} are not necessarily isomorphisms but the kernels and cokernels are of order bounded independently of $m$ and $i$.


\bibliographymark{References}

\providecommand{\bysame}{\leavevmode\hbox to3em{\hrulefill}\thinspace}
\providecommand{\arXiv}[2][]{\href{https://arxiv.org/abs/#2}{arXiv:#1#2}}
\providecommand{\MR}{\relax\ifhmode\unskip\space\fi MR }
\providecommand{\MRhref}[2]{%
  \href{http://www.ams.org/mathscinet-getitem?mr=#1}{#2}
}
\providecommand{\href}[2]{#2}

\end{document}